\newcommand{\ls}{\leqslant}
\newcommand{\gr}{\geqslant}
\newcommand{\R}{\mathbb{R}}
\DeclareMathOperator{\vol}{vol}
\renewcommand{\leq}{\leqslant}
\renewcommand{\geq}{\geqslant}
\renewcommand{\le}{\leqslant}
\renewcommand{\ge}{\geqslant}
\newtheorem{theorem}{Theorem}
\newtheorem{lemma}[theorem]{Lemma}
\newtheorem{corollary}[theorem]{Corollary}
\newtheorem{proposition}[theorem]{Proposition}
\theoremstyle{remark}
\newtheorem{remark}[theorem]{Remark}
\theoremstyle{definition}
\newtheorem{defn}[theorem]{Definition}
\title{On Sections of Convex Bodies in John's Position and of Generalised $B_p^n$ Balls}
\author{David Alonso-Guti\'{e}rrez}
\address{\'{A}rea de an\'{a}lisis matem\'{a}tico, Departamento de matem\'{a}ticas, Facultad de Ciencias, Universidad de Zaragoza, Pedro Cerbuna 12, 50009 Zaragoza (Spain), IUMA}
\email{alonsod@unizar.es}
\author{Silouanos Brazitikos}
\address{Department of Mathematics \& Applied Mathematics, University of Crete, Voutes Campus, 70013 Heraklion, Greece}
\email{silouanb@uoc.gr}
\author{Giorgos Chasapis}
\address{Department of Mathematics, University of Ioannina, University Campus, 45110 Ioannina, Greece}
\email{gchasapis@uoi.gr}
\date{\today}
\subjclass{Primary 52A23, Secondary 60D05.}
\keywords{Convex bodies, inequalities for sections, John's position, Brascamp-Lieb inequality, $\ell_p^n$-balls}
\begin{document}

\begin{abstract}
We revisit an ingenious argument of K. Ball to provide sharp estimates for the volume of sections of a convex body in John's position. Our technique combines the geometric Brascamp-Lieb inequality with a generalised Parseval-type identity. This lets us complement some earlier results of the first two named authors, as well as generalise the classical estimates of Meyer-Pajor and Koldobsky regarding extremal sections of $B_p^n$ balls to a broader family of norms induced by a John's decomposition of the identity in $\mathbb{R}^n$.
\end{abstract}
\maketitle
		
\section{Introduction}
	The study of hyperplane sections and projections of convex bodies is a classical and actively developing area of modern convex geometry, with deep connections to functional analysis, geometric tomography, and high-dimensional probability. Extremal questions about $k$-dimensional sections — for example, determining maximal or minimal volumes of such sections — encode subtle quantitative information about the geometry of a body and are related to central problems of the field, cf. for example the Busemann–Petty problem \cite{BP} and the slicing problem (recently settled in \cite{KL}). We refer the reader to the recent survey \cite{NT} as well as the monographs \cite{Kold-book}, \cite{BGVV} for a detailed account of history, related works and advances in the field as well as numerous applications.
	
The present work further explores and complements some of the results of the recent paper \cite{AB}, where the first two named authors initiated a systematic study of sections of convex bodies placed in \emph{John's position}. John's position is a canonical normalization, which happens when the maximum volume ellipsoid contained in the convex body $K\subset\mathbb R^n$ is the Euclidean unit ball $B_2^n$. In such case, John \cite{John} exhibited contact points $v_1,\dots,v_m\in\partial K\cap S^{n-1}$ and positive weights $c_1,\dots,c_m$ that satisfy
$$
	\mathrm{Id}_n=\sum_{j=1}^m c_j\,v_j\otimes v_j,\qquad \sum_{j=1}^m c_j v_j=0,
$$
where $\mathrm{Id}_n$ denotes the identity in $\mathbb{R}^n$, $\partial K$ denotes the boundary of $K$, and $S^{n-1}$ denotes the Euclidean sphere. These identities provide a flexible algebraic framework that allows to reduce multidimensional volume problems to a combination of one-dimensional estimates and functional inequalities.
	
Recall that if $(v_j)_{j=1}^m\subseteq S^{n-1}$ and $(c_j)_{j=1}^m\subseteq(0,\infty)$ are as above, then necessarily $c_j\in(0,1]$ for every $1\ls j\ls m$. Besides, $\displaystyle{\sum_{j=1}^m c_j=n}$ and, for any integrable functions $(f_j)_{j=1}^m:\mathbb{R}\to[0,\infty)$,
\begin{equation}\label{eq:BrasLieb}
\int_{\mathbb{R}^n}\prod_{j=1}^mf_j^{c_j}(\langle x,v_j\rangle)\,dx\ls \prod_{j=1}^m\left(\int_\mathbb{R} f_j(t)\,dt\right)^{c_j}.
\end{equation}
The above so-called geometric form of the classical Brascamp-Lieb inequality \cite{BL} was suggested by Ball (see \cite[Lemma 2]{Ba}), who originally applied it to efficiently estimate the volume of $k$-dimensional sections of the unit cube $B_\infty^n$ in $\mathbb{R}^n$ showing, in particular, that for every $k$-dimensional linear subspace $H$ of $\mathbb{R}^n$
\begin{equation}\label{eq:Ball-1}
\vol_k(B_\infty^n\cap H)\ls \left(\frac{n}{k}\right)^{\frac{k}{2}}\vol_k(B_\infty^k).
\end{equation}
We remark that this estimate is optimal if and only if $k$ divides $n$. Using again \eqref{eq:BrasLieb} in conjunction with a Fourier-analytic argument, Ball established in the same work the inequality
\begin{equation}\label{eq:Ball-2}
\vol_k(B_\infty^n\cap H)\ls 2^{\frac{n-k}{2}}\vol_k(B_\infty^k),
\end{equation}
which is an optimal estimate whenever $k\gr n/2$. We stress that Ball's approach in this case relies heavily on the product structure of the cube: the Fourier transform of the indicator of a product body factorizes as a product of one-dimensional Fourier transforms, and this factorization is the key to obtaining sharp bounds. In the absence of such coordinate independence — i.e.\ for general bodies in John's position — the direct product decomposition is no longer available, and the Fourier method appears a priori inapplicable.


One of the results in \cite{AB} was a sharp generalisation of Ball's inequality \eqref{eq:Ball-1} for $k$-dimensional sections of any centrally symmetric convex body in John's position. We show in Section \ref{subsec:Symmetric} below that in the case $k\gr\frac{n}{2}$, this estimate is sharp and the upper bound provided by the right-hand side in \eqref{eq:Ball-2} is not true in general. We showcase this by constructing convex polytopes $L$ in John's position satisfying $\displaystyle{\vol_k(L\cap \mathbb{R}^k)=\left(\frac{n}{k}\right)^{k/2}\vol_k(B_\infty^k)}$ (see Theorem \ref{thm:counterexample} below). Nevertheless, we will provide sharper estimates for the volume of sections of centrally symmetric convex bodies in John's position (see Theorem \ref{thm:UpperBoundVolume}) by a $k$-dimensional linear subspace $H$ that take into account the geometry of the associated decomposition of the identity. This leads to an additional complication compared to Ball's setting, namely a non-trivial maximisation problem that remains to handle (cf. Proposition \ref{prop:exact-max}). Under the assumption that $c_j\Vert P_Hv_j\Vert_2^2\gr\frac{1}{2}$ for every $1\ls j\ls m$ such that $P_Hv_j\neq0$, provides better estimates than the upper bound in \eqref{eq:Ball-1} and, in some cases, than the upper bound in \eqref{eq:Ball-2} (see Corollary \ref{cor:UpperBoundVolumeBallsIntegralEstimate} and Proposition \ref{prop:ComparisonEstimates}).
	
The main conceptual advance of the present work is to show how the product-structure requirement in Ball's argument for the cube can be circumvented by combining a Parseval-type identity with the Brascamp–Lieb inequality. In our setting, this Parseval representation (Theorem~\ref{thm:parseval}) plays the role of the product decomposition in the cube case: the volume is represented as an integral of a product of univariate factors, but now the exponents and prefactors reflect the geometric data of John's decomposition rather than coordinate independence. Once the Parseval representation is in place, a Brascamp–Lieb reduction (Corollary~\ref{cor:UpperBoundParsevalAndBrascampLieb}) converts the multidimensional integral into a product of one-dimensional integrals with exponents determined by the Euclidean norms of the projections of our vectors.  Therefore, whenever we need to bound the volume (or the volume of a section) of a symmetric polytope, there are two equivalent representations.  One is the integral of the product of the indicator functions, and the other is the integral (over the complementary subspace) of the Fourier transform.  After passing to the Fourier representation we apply the Brascamp–Lieb inequality; the choice of exponents is governed by the sizes of the projections of the original vectors onto the complementary subspace. We will go through this procedure in detail in Section \ref{sec:Parseval}.

In parallel to the study in \cite{AB}, we have been also concerned with the extent at which our methods can provide meaningful estimates for the Wills functional of sections of centrally symmetric convex bodies in John's position (see Section \ref{sec:wills} for definitions and background). We provide an estimate that eventually yields a second proof of our aforementioned result on sections of centrally symmetric convex bodies, as well as a result on the mean width of sections, already witnessed in \cite{AB}.
	
Our second result extends the known bounds for sections of the $\ell_p^n$--balls, $p\in[1,2]$ (due to Meyer--Pajor \cite{MP} and Koldobsky \cite{Kold}).  Notably, our proof shows that the same method used for cube-slicing applies here as well.  We first express the relevant integral via Parseval's identity, then apply the Brascamp--Lieb inequality, and finally invoke a one-dimensional estimate for $\gamma_p$, the Fourier transform of $e^{-|x|^p}$. The technique allows us to consider a family of generalised $\ell_p$ norms of the form $\left(\sum_{j=1}^m \alpha_j|\langle \cdot,v_j\rangle|^p\right)^{1/p}$, given a John decomposition $(c_j,v_j)_{j=1}^m\subseteq (0,1]\times S^{n-1}$ in $\mathbb{R}^n$ and any positive scalars $(\alpha_j)_{j=1}^m$. We show for example that, for any $p\in[1,2]$, if $K_p$ is the closed unit ball for this norm then, for every $k$-dimensional subspace $H$ of $\mathbb{R}^n$,
\[
\vol_k(K_p\cap H)\ls \prod_{j=1}^m\left(\frac{\sqrt{c_j}}{\alpha_j^{1/p}}\right)^{1-c_j\Vert P_H(\sqrt{c_j}v_j)\Vert_2^2}\vol_k(B_p^k).
\]

In the classical case, our approach yields an intermediate bound for arbitrary sections of the $\ell_1^n$--ball, that depends only on the Euclidean norms of the projections of the standard basis vectors onto the subspace.  Consequently, the resulting bound is more sensitive than that of Meyer--Pajor and Koldobsky. Our results in Section \ref{sec:Kp} should be compared to some of the results of \cite{Bar} (see Section 3.3 therein, where a sharp estimate, in the case $k$ divides $n$, is also obtained for $\vol_k(B_p^n\cap H)$ for $p\gr 2$).


We also use the Fourier analytic approach in order to obtain upper estimates for the volume of sections of non-necessarily centrally symmetric convex bodies in John's positions, providing new estimates under some conditions on the associated decomposition of the identity (see Theorem \ref{thm:nonsym-sec-john}).

The paper is organized as follows: In Section \ref{sec:Preliminaries} we introduce the preliminary concepts that we need in order to state our results, as well as some known estimates with which we will compare our new estimates. In Section \ref{sec:GeneralSetting} we will provide the general setting  for our results and fix some notation that will be used through the whole paper. In Section \ref{sec:Parseval} we will present the Fourier analytic method and the Parseval-type inequality, which we will exploit in order to prove our new estimates. In Section \ref{sec:VolumesOfSections} we will apply such method to obtain new upper bounds for the volumes of centrally symmetric and non-necessarily centrally symmetric convex bodies in John's position. In Section \ref{sec:wills} we will focus on the use of our approach to provide estimates for the Wills functional of sections of centrally symmetric convex bodies in John's position, recovering the estimates we obtained for the volume. Finally, in Section \ref{sec:Kp} we will apply this method to obtain estimates for the volumes of sections of the aforementioned convex bodies $K_p$.

\section{Preliminaries}\label{sec:Preliminaries}

In this Section we are going to present some preliminary concepts and results, which will be needed in order to state and prove our results.

\subsection{Convex bodies in John's position and decompositions of the identity}\label{subsec:John's position}

A convex body $K\subseteq\R^n$ is said to be in John's position if the maximal volume ellipsoid contained in $K$ is the Euclidean ball. A classical result, (see \cite{John} and \cite{Ba3}) characterizes that a convex body $K\subseteq\R^n$ is in John's position if and only if there exists $(v_j)_{j=1}^m\subseteq \partial K\cap S^{n-1}$ and $(c_j)_{j=1}^m\subseteq(0,\infty)$ such that
\begin{equation}\label{eq:John-dec}
	\mathrm{Id}_n=\sum_{j=1}^m c_j\,v_j\otimes v_j,\qquad \sum_{j=1}^m c_j v_j=0,
\end{equation}
where $\mathrm{Id}_n$ denotes the identity in $\R^n$. Notice that, in such case, taking traces we have that necessarily $\displaystyle{\sum_{j=1}^m c_j=n}$. Notice also that for every $1\ls k\ls m$ we have
$$
1=\Vert v_k\Vert_2^2=\sum_{j=1}^m c_j\langle v_k, v_j\rangle^2\geq c_k\langle v_k,v_k\rangle^2=c_k,
$$
and then $(c_j)_{j=1}^m\subseteq(0,1]$. It is clear that if $K\subseteq\R^n$ is a centrally symmetric convex body and $(v_j)_{j=1}^n\subseteq \partial K\cap S^{n-1}$ and $(c_j)_{j=1}^m\subseteq(0,1]$ satisfy that
$$
\mathrm{Id}_n=\sum_{j=1}^m c_j\,v_j\otimes v_j,
$$
then $(\pm v_j)_{j=1}^m\subseteq \partial K\cap S^{n-1}$, $\left(\frac{c_j}{2}\right)_{j=1}^m\subseteq\left(0,\frac{1}{2}\right]\subseteq(0,1]$,
$$
\mathrm{Id}_n=\sum_{j=1}^m \frac{c_j}{2}\,v_j\otimes v_j+\sum_{j=1}^m \frac{c_j}{2}\,(-v_j)\otimes (-v_j),\quad\textrm{ and }\quad\sum_{j=1}^m \frac{c_j}{2} v_j+\sum_{j=1}^m \frac{c_j}{2}(-v_j)=0.
$$
Thus, the fact that a centrally symmetric convex body $K\subseteq\R^n$ is in John's position is characterized by the fact that $B_2^n\subseteq K$ and the existence of contact points $(v_j)_{j=1}^n\subseteq \partial K\cap S^{n-1}$ and coefficients $(c_j)_{j=1}^m\in(0,1]$ providing a decomposition of the identity.

It is also clear that if $(x_j)_{j=1}^n$ is an orthonormal basis in $\R^n$, then they provide a decomposition of the identity with coefficients $c_j=1$ for every $1\ls j\ls n$.


Let us point out that if  $(c_j,v_j)_{j=1}^m\subseteq(0,1]\times S^{n-1}$ satisfy that $\displaystyle{\mathrm{Id}_n=\sum_{j=1}^m c_j\,v_j\otimes v_j}$ and $H\in G_{n,k}$ is a $k$-dimensional linear subspace, calling $J=\{1\ls j\ls m\,:P_Hv_j\neq0\}$, we have that for every $x\in H$
$$
x=\sum_{j=1}^m c_j\langle x,P_H v_j\rangle P_H v_j=\sum_{j\in J} c_j\langle x,P_H v_j\rangle P_H v_j=\sum_{j\in J} c_j\Vert P_Hv_j\Vert_2^2\left\langle x,\frac{P_H v_j}{\Vert P_H v_j\Vert_2}\right\rangle \frac{P_H v_j}{\Vert P_H v_j\Vert_2}.
$$
That is, denoting by $\mathrm{Id}_H$ the identity in $H$,
\begin{equation}\label{eq:IdentityProjection}
\mathrm{Id}_H=\sum_{j\in J}\tilde{c}_j u_j\otimes u_j,
\end{equation}
where $\tilde{c}_j=c_j\Vert P_H v_j\Vert_2^2$ and $u_j=\frac{P_H v_j}{\Vert P_H v_j\Vert_2}$ for every $j\in J$. In particular, the non-zero orthogonal projections of an orthonormal basis on $\R^n$ onto a linear subspace $H$ provide a decomposition of the identity on $H$. The following proposition, which is well known (see for example \cite[Lemma 2.1]{Iv}) shows that any decomposition of the identity is obtained as the orthogonal projections of the vectors in an orthonormal basis in a higher-dimensional space. We include a proof for the sake of completeness.
	
	\begin{proposition}\label{prop:H^bot}
		Let $1\ls k\ls m$. Let $H\in G_{m,k}$ and let $(u_j)_{j=1}^m\subseteq S^{m-1}\cap H$ and $(\tilde{c}_j)_{j=1}^m\subseteq(0,1]$ such that
		\[
		\mathrm{Id}_H = \sum_{j=1}^m \tilde{c}_j u_j\otimes u_j.
		\]
		Then, there exists $(x_j)_{j=1}^m$, an orthonormal basis of $\mathbb{R}^m$ and $(w_j)_{j=1}^m\subseteq S^{m-1}\cap H^\perp$ such that
		\[
		P_H x_j= \sqrt{\tilde{c}_j}u_j \qquad \hbox{ and }\qquad P_{H^\bot}x_j=\sqrt{1-\tilde{c}_j}w_j
		\]
		for every $1\ls j\ls m$. Moreover,
		\[
		\mathrm{Id}_{H^\bot} = \sum_{j=1}^m (1-\tilde{c}_j)w_j\otimes w_j.
		\]
	\end{proposition}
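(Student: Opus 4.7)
The plan is to build the orthonormal basis $(x_j)_{j=1}^m$ whose $H$-components are prescribed to be $\tilde{x}_j:=\sqrt{c_j}\,u_j\in H\subset\mathbb{R}^m$, and then read off the $w_j$ from the orthogonal components. The key spectral observation is that the Gram matrix $G=(\sqrt{c_ic_j}\langle u_i,u_j\rangle)_{i,j=1}^m$ of the vectors $\tilde{x}_j$ factorises as $G=A^{\top}A$, where $A$ is the $k\times m$ matrix whose columns are the coordinates of $\tilde{x}_j$ in any fixed orthonormal basis of $H$. The hypothesis $\sum_{j=1}^m c_ju_j\otimes u_j=\mathrm{Id}_H$ translates to $AA^{\top}=\mathrm{Id}_H$, so the nonzero eigenvalues of $G$ are all $1$ with multiplicity $k$; hence $I_m-G$ is positive semi-definite of rank exactly $m-k=\dim H^\perp$.

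I would then factor $I_m-G=B^{\top}B$ for some $(m-k)\times m$ matrix $B$, fix an orthonormal basis of $H^\perp$ in order to identify the columns of $B$ with vectors $z_j\in H^\perp$, and set $x_j:=\tilde{x}_j+z_j$. Orthonormality of $(x_j)_{j=1}^m$ is then automatic, since
\[
\langle x_i,x_j\rangle=\langle\tilde{x}_i,\tilde{x}_j\rangle+\langle z_i,z_j\rangle=G_{ij}+(I_m-G)_{ij}=\delta_{ij}.
\]
Moreover, $\|z_j\|^2=(I_m-G)_{jj}=1-c_j$ (which incidentally recovers the well-known bound $c_j\leq 1$), so defining $w_j:=z_j/\sqrt{1-c_j}$ when $c_j<1$ (and an arbitrary unit vector in $H^\perp$ if $c_j=1$) produces the required decomposition $P_Hx_j=\sqrt{c_j}\,u_j$ and $P_{H^\perp}x_j=\sqrt{1-c_j}\,w_j$.

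For the final identity on $H^\perp$, I would simply use that $\sum_{j=1}^m x_j\otimes x_j=\mathrm{Id}_{\mathbb{R}^m}$ because $(x_j)_{j=1}^m$ is an orthonormal basis, and conjugate both sides by the orthogonal projection $P_{H^\perp}$ to obtain
\[
\sum_{j=1}^m (1-c_j)\,w_j\otimes w_j=\sum_{j=1}^m P_{H^\perp}x_j\otimes P_{H^\perp}x_j=P_{H^\perp}\,\mathrm{Id}_{\mathbb{R}^m}\,P_{H^\perp}=\mathrm{Id}_{H^\perp}.
\]
I do not anticipate any genuine obstacle, as the argument is elementary linear algebra; the only point requiring a little care is the degenerate case $c_j=1$, where $w_j$ is not canonical but may be chosen arbitrarily in $H^\perp\cap S^{m-1}$ without affecting the statement, since the coefficient $\sqrt{1-c_j}$ vanishes.
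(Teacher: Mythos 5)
Your proposal is correct and follows essentially the same strategy as the paper: both proofs lift the vectors $\sqrt{c_j}\,u_j$ to an orthonormal basis $(x_j)_{j=1}^m$ of $\mathbb{R}^m$ and then project $\sum_j x_j\otimes x_j=\mathrm{Id}_m$ onto $H^\perp$. The only cosmetic difference is the completion mechanism — you factor the complementary projection $I_m-G=B^{\top}B$ and place its columns in $H^\perp$, whereas the paper completes the $k$ orthonormal rows of $[\sqrt{c_1}u_1,\dots,\sqrt{c_m}u_m]$ to an orthogonal $m\times m$ matrix and reads off its columns; these two constructions are equivalent.
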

	
	\begin{proof}
		We can assume, without loss of generality, that $H=\textrm{span}\{e_1,\dots,e_k\}\subseteq\R^m$, where $(e_j)_{j=1}^m$ denotes the canonical basis of $\R^m$. Calling $\tilde{u}_j=\sqrt{\tilde{c}_j}u_j$ we have that for every $1\ls i\ls k$,
		$$
		1=\Vert e_i\Vert_2^2=\sum_{j=1}^m\langle e_i,\tilde{u}_j \rangle^2.
		$$
		Besides, for every $1\ls i_1<i_2\ls k$ we have that
		$$
		0=\langle e_{i_1},e_{i_2}\rangle=\sum_{j=1}^m\langle e_{i_1},\tilde{u}_j\rangle\langle e_{i_2},\tilde{u}_j\rangle.
		$$
		Therefore, calling, for $1\ls j\ls k$, $\tilde{v}_j\in\R^m$ the $j$-th row of the $k\times m$ matrix $[\tilde{u}_1,\dots,\tilde{u}_m]$ we have that $\{\tilde{v}_j\,:\,1\ls j\ls k\}$ is a set of $k$ orthonormal vectors in $\R^m$. Completing this set to an orthormal basis of $\R^m$, $\{\tilde{v}_j\,:\,1\ls j\ls m\}$ and taking, for $1\ls j\ls m$, $x_j\in\R^m$ the $j$-th column of the matrix $[\tilde{v}_1,\dots,\tilde{v}_m]^t$, we have that
		\begin{itemize}
			\item $(x_j)_{j=1}^m\subseteq\R^m$ is an orthonormal basis of $\R^m$,
			\item $P_Hx_j=\tilde{u}_j=\sqrt{\tilde{c}_j}u_j$ for every $1\ls j\ls m$.
		\end{itemize}
		Besides, if $H^\perp=\textrm{span}\{e_{k+1}\dots e_m\}$ is the orthogonal complement of $H$ in $\R^m$, we have that for every $1\ls j\ls m$
		$$
		\Vert P_{H^\perp}x_j\Vert_2^2=\Vert x_j\Vert_2^2-\Vert P_{H}x_j\Vert_2^2=1-\Vert \sqrt{\tilde{c}_j}u_j\Vert_2^2=1-\tilde{c}_j
		$$
		and, therefore, there exist $(w_j)_{j=1}^m\in S^{m-1}\cap H^\perp$ such that $P_{H^\perp}x_j=\sqrt{1-\tilde{c}_j}w_j$. Moreover, since
		$$
		\mathrm{Id}_m=\sum_{j=1}^m x_j\otimes x_j,
		$$
		we have that
		$$
		Id_{H^\perp}=\sum_{j=1}^m P_{H^\perp}x_j\otimes P_{H^\perp}x_j=\sum_{j=1}^m(1-\tilde{c}_j)w_j\otimes w_j.
		\eqno\qedhere$$
	\end{proof}

Through the text, we will denote by $B_\infty^n$ the $n$-dimensional cube, and by $S_n$ the n-dimensional regular simplex in John's position.
\subsection{Brascamp-Lieb inequality}\label{subsec:Brascamp-Lieb}

Brascamp-Lieb inequality (see \cite{BL}) states that, given $m\gr n$, $(v_j)_{j=1}^m\in\R^n$, and $(c_j)_{j=1}^m\in(0,1]$ such that $\displaystyle{\sum_{j=1}^m c_j=n}$, for any integrable $(f_j)_{j=1}^m:\R\to[0,\infty)$ we have
$$
\int_{\R^n}\prod_{j=1}^m f_j^{c_j}(\langle x, v_j\rangle)dx\leq D\prod_{j=1}^m\left(\int_{\R}f_j(t)dt\right)^{c_j},
$$
where
$$
D=\sup\left\{\frac{\int_{\R^n}\prod_{j=1}^mg_j^{c_j}(\langle x,v_j\rangle)dx}{\prod_{j=1}^m\left(\int_\R g_j(t)dt \right)^{c_j}}\,:\,g_j(x)=e^{-\lambda_jx^2}\right\}=\sup\left\{\sqrt{\frac{\prod_{j=1}^m\lambda_j^{c_j}}{\textrm{det}(\sum_{j=1}^mc_j\lambda_j u_j\otimes u_j)}}\,:\,\lambda_j>0\right\}.
$$
In \cite{Ba}, Ball showed that if the vectors $(v_j)_{j=1}^m$ are unit vectors which, together with the numbers $(c_j)_{j=1}^m$, provide a decomposition of the identity, then  $D=1$, providing the so-called geometric Brascamp-Lieb inequality
\begin{theorem}[Geometric Brascamp-Lieb inequality]\label{thm:GeometricBrascampLieb}
Let  $(c_j,v_j)_{j=1}^m\subseteq(0,1]\times S^{n-1}$ such that $\displaystyle{\mathrm{Id}_n=\sum_{j=1}^m c_j\,v_j\otimes v_j}$. Then, for any integrable $(f_j)_{j=1}^m:\R\to[0,\infty)$, we have
$$
\int_{\R^n}\prod_{j=1}^m f_j^{c_j}(\langle x, v_j\rangle)dx\leq \prod_{j=1}^m\left(\int_{\R}f_j(t)dt\right)^{c_j}.
$$
\end{theorem}

\subsection{The intrinsic volumes and the Wills functional}\label{subsec:IntrinsicVolumesWills}

Given any compact convex set $K\subseteq\R^n$, by Steiner's formula (see \cite[equation (4.1)]{Sch}), the volume of $K+tB_2^n$ can be expressed as a polynomial in the variable $t\gr0$,
$$
\vol_n(K+tB_2^n)=\sum_{i=0}^n{{n}\choose{i}}W_i(K)t^i,
$$
where the numbers $W_i(K)$ are called the querma\ss integrals of $K$. We have that $W_0(K)=\vol_n(K)$, $nW_1(K)=|\partial K|$ is the surface area of $K$, and $W_{n-1}(K)=\vol_n(B_2^n)w(K)$ is a multiple of the mean width of $K$, where the mean width of $K$ is
$$
w(K)=\int_{S^{n-1}}h_K(\theta)d\sigma(\theta),
$$
being $h_K(\theta)=\sup_{x\in K}\langle x,\theta\rangle$, the support function of $K$ in the direction $\theta$, and $d\sigma$, the uniform probability measure on the Euclidean sphere. Besides, $W_n(K)=\vol_n(B_2^n)$.

If $K$ is contained in a $k$-dimensional subspace $H\in G_{n,k}$, one can also compute the querma\ss integrals with respect to $H$, identified with $\R^k$. Denoting by $W_i^{(k)}(K)$, for $i=0,\dots,k$, the querma\ss integrals with respect to $H$, we have that (see e.g. \cite[Property~3.1]{SY93})
$$
W_{i}^{(k)}(K)=\frac{{{n}\choose{n-k+i}}}{{{k}\choose{i}}}\frac{\vol_i(B_2^i)}{\vol_{n-k+i}(B_{2}^{n-k+i})}W_{n-k+i}(K),\quad
\forall0\ls i\ls k,
$$
while $W_i(K)=0$ for all $0\ls i<n-k$. In order to avoid the
issue that querma\ss integrals depend on the space where the convex
body is embedded, McMullen \cite{Mc75} defined the  intrinsic volumes
of a compact convex set $K\subseteq\R^n$ as
$$
V_i(K)=\frac{{{n}\choose{i}}}{\vol_{n-i}(B_2^{n-i})}W_{n-i}(K),\quad
\forall 0\ls i\ls n.
$$
In particular, $V_0(K)=1$, $V_1(K)=\frac{n\vol_n(B_2^n)}{\vol_{n-1}(B_2^{n-1})}w(K)$, and $V_n(K)=\vol_n(K)$.

In \cite{W73}, Wills introduced and studied the functional
\begin{equation}\label{eq:FWills}
\mathcal{W}(K)=\sum_{i=0}^n V_i(K)=1+\sum_{i=1}^nV_i(K),
\end{equation}
motivated by its possible relation with the lattice point enumerator $G_n(K)=\sharp(K\cap\mathbb{Z}^n)$, the cardinality of $K\cap\mathbb{Z}^n$. Notice that, since the $i$-th intrinsic volume of $K$ is homogeneous of degree $i$, for any $\lambda\geq 0$ we have
$$
\mathcal{W}(\lambda K)=1+\sum_{i=1}^nV_i(\lambda K)=1+\sum_{i=1}^n\lambda^iV_i(K),
$$
and then, if $K\subseteq\R^n$ is a convex body,
\begin{equation}\label{eq:WillsVolumeMeanWidth}
\textrm{vol}_n(K)=\lim_{\lambda\to\infty}\frac{\mathcal{W}(\lambda K)}{\lambda^n},\quad\textrm{and}\quad V_1(K)=\lim_{\lambda\to0^+}\frac{\mathcal{W}(\lambda K)-1}{\lambda}.
\end{equation}

We refer the reader to \cite{W2}, \cite{H}, \cite{McM2}, or \cite{AHY} for more properties on the Wills functional. For our purposes, we emphasize the following one (see \cite[Eq.(1.3)]{H}): For any convex body $K\subseteq\R^n$
\begin{equation}\label{eq:WillsIntegral}
\mathcal{W}(K)=\int_{\R^n}e^{-\pi d^2(x,K)}dx,
\end{equation}
where $d(\cdot, K)$ denotes the Euclidean distance to $K$.

\subsection{Sections of convex bodies in John's position}\label{subsec:SectionsJohnsPositionKnownEstimates}

In \cite{AB}, the authors initiated a systematic study of the sections of convex bodies in John's position, in order to extend inequality \eqref{eq:Ball-1} to sections of centrally symmetric convex bodies in John's position and, using the same approach, obtain estimates for other geometric parameters. We summarize some of the results obtained for sections of centrally symmetric convex bodies in the following theorem:

\begin{theorem}\label{thm:SectionsSymmetricJohn}
Let $K\subseteq\R^n$ be a centrally symmetric convex body in John's position and let $(c_j,v_j)_{j=1}^m\subseteq (0,1]\times(\partial K\cap S^{n-1})$ be its associated decomposition of the identity. Let $H\in G_{n,k}$ and $J=\{1\ls j\ls m\,:P_Hv_j\neq0\}$. Then,
\begin{itemize}
\item $\displaystyle{\vol_k(K\cap H)\ls\prod_{j\in J}\left(\frac{1}{\Vert P_H v_j\Vert_2}\right)^{c_j\Vert P_Hv_j\Vert_2^2}\vol_k(B_\infty^k)\leq\left(\frac{n}{k}\right)^\frac{k}{2}\vol_k(B_\infty^k).}$
\item $\displaystyle{w(K\cap H)\ls\frac{\Gamma\left(\frac{k}{2}\right)}{\sqrt{\pi}\Gamma\left(\frac{k+1}{2}\right)}\sum_{j\in J}c_j\Vert P_Hv_j\Vert_2\ls\sqrt{\frac{n}{k}}w(B_\infty^k)}$, which is equivalent to \\
$\displaystyle{V_1(K\cap H)\ls 2\sum_{j\in J}c_j\Vert P_Hv_j\Vert_2\ls\sqrt{\frac{n}{k}}V_1(B_\infty^k)}$.
\item $\displaystyle{\mathcal{W}(\lambda(K\cap H))\ls\prod_{j\in J}\left(1+\frac{2\lambda}{\Vert P_Hv_j\Vert_2}\right)^{c_j\Vert P_Hv_j\Vert_2^2}\ls\mathcal{W}\left(\lambda\sqrt{\frac{n}{k}}B_\infty^k\right)}$, for every $\lambda\geq0$.
\end{itemize}
\end{theorem}
Observe that, by \eqref{eq:WillsVolumeMeanWidth}, the inequalities for the volume and the mean width can be deduced from the inequalities for the Wills functional.

The following estimate was also obtained for sections of not necessarily centrally symmetric convex bodies:

\begin{theorem}\label{thm:SectionsNonSymmetricJohn}
Let $K\subseteq\R^n$ be a convex body in John's position. Let $(c_j,v_j)_{j=1}^m\subseteq (0,1]\times(\partial K\cap S^{n-1})$ be its associated decomposition of the identity and let $H\in G_{n,k}$. Then,
$$
\vol_k(K\cap H)\ls\frac{n^\frac{k}{2}}{k^\frac{k}{2}}\frac{(n+1)^\frac{k+1}{2}}{(k+1)^\frac{k+1}{2}}\prod_{j=1}^m\Vert P_F\tilde{v}_j\Vert_2^{\delta_j\Vert P_F\tilde{v}_j\Vert_2^2}\vol_k(S_k)\ls\frac{1}{(k+1)^\frac{n-k}{2(n+1)}}\frac{(n(n+1))^\frac{k}{2}}{(k(k+1))^\frac{k}{2}}\vol_k(S_k).
$$
where, $F=H\times\R\in G_{k+1,n+1}$ and, for every $1\ls j\ls m$,
\begin{itemize}
\item $\tilde{v}_j=\sqrt{\frac{n}{n+1}}\left(-v_j,\frac{1}{\sqrt{n}}\right)$ and
\item $\delta_j=\frac{n+1}{n}c_j$.
\end{itemize}
\end{theorem}
\subsection{The Fourier transform}\label{subsec:FourierTransform}

Given any integrable function $f:\R^n\to\R$, we denote by $\widehat{f}$ the Fourier transform
$$
\widehat{f}(y)=\int_{\R^n}f(x)e^{i\langle x,y\rangle}dx,\qquad y\in\R^n.
$$
Notice that, from the definition, if for any $\alpha>0$ we call $g(x)=f(\alpha x)$, then
\begin{equation}\label{eq:FourierScaling}
\widehat{g}(y)=\frac{\widehat{f}\left(\frac{y}{\alpha}\right)}{\alpha^n},\quad\forall y\in\R^n.
\end{equation}

It is well known that if $a>0$, then the Fourier transform of the indicator function of $[-a,a]$ is, for every $y\in\R$
\begin{equation}\label{eq:FourierTransformCharacteristic}
\widehat{\mathds{1}}_{[-a,a]}(y)=\begin{cases}
\frac{2\sin(ay)}{y}&\textrm{ if }a\neq0\\
2&\textrm{ if }a=0,
\end{cases}
\end{equation}
and that if $f$ is the standard Gaussian density, $f(x)=\frac{1}{\sqrt{2\pi}}e^{-\frac{x^2}{2}}$, then
\begin{equation}\label{eq:FourierTransformGaussian}
\widehat{f}(y)=\int_\R\frac{e^{-\frac{x^2}{2}}}{\sqrt{2\pi}}e^{ixy}dx=\frac{2}{\sqrt{2\pi}}\int_0^\infty e^{-\frac{x^2}{2}}\cos(xy)dx=e^{-\frac{y^2}{2}},\qquad y\in\R.
\end{equation}

Let us denote by ${\mathcal S}(\mathbb{R}^n)$, the Schwartz space of all $C^{\infty)}(\mathbb{R}^n)$ functions $f:\mathbb{R}^n\to \mathbb{R}$ with derivatives that are rapidly decreasing, that is,
$$
{\mathcal S}(\mathbb{R}^n)=\{f\in\mathcal{C}^{\infty)}(\R^n)\,:\forall \alpha,\beta, \Vert f\Vert_{\alpha,\beta}<\infty\},
$$
where, for any multiindex $\alpha,\beta$,
$$
\Vert f\Vert_{\alpha,\beta}=\sup_{x\in\R^n}\left|x_{i_1}^{\alpha_1}\dots x_{i_m}^{\alpha_m}\frac{\partial^{|\beta|}f}{\partial x_{j_1}^{\beta_1}\dots\partial x_{j_k}^{\beta_k}}(x)\right|,
$$
being $\displaystyle{\sum_{i=1}^m\alpha_i=|\alpha|}$ and $\displaystyle{\sum_{j=1}^k\beta_j=|\beta|}$. Fourier inversion theorem states that if $f\in\mathcal{S}(\R^n)$, then, for every $x\in\R^n$
$$
f(x)=\frac{1}{(2\pi)^n}\int_{\R^n}\widehat{f}(y)e^{-i\langle x,y\rangle}dy=\frac{1}{(2\pi)^n}\widehat{\widehat{f}}(-x).
$$
In particular,
\begin{equation}\label{eq:FourierInversionAt0}
f(0)=\frac{1}{(2\pi)^n}\int_{\R^n}\widehat{f}(y)dy.
\end{equation}

In \cite{Be}, Beckner proved the following sharp Hausdorff-Young inequality: If $p\geq 2$, then the Fourier transform extends to a bounded liner operator from $L^{p^\prime}(\R^n)$ to $L^p(\R^n)$ and
\begin{equation}\label{eq:Beckner}
\left(\int_{\R^n}|\widehat{f}(y)|^pdy\right)^{1/p}\leq(2\pi)^{1/p}\left(\frac{p^{1/p}}{{p^\prime}^{1/p^\prime}}\right)^{n/2}\left(\int_{\R^n}|f(y)|^{p^\prime}dy\right)^{1/p^\prime},
\end{equation}
where $p^\prime$ denotes the conjugate exponent of $p$, such that $\frac{1}{p}+\frac{1}{p^\prime}=1$. In particular, together with Fourier inversion theorem, the Fourier transform extends to a bounded linear operator from $L^2(\R^n)$ to $L^2(\R^n)$ and we have Plancherel's identity:
$$
\left(\int_{\R^n}|\widehat{f}(y)|^2dy\right)^{1/2}=\sqrt{2\pi}\left(\int_{\R^n}|f(y)|^{2}dy\right)^{1/2}.
$$

\subsection{Volumes of sections of the cube via the Fourier transform}\label{subsec:SectionsCube}

In \cite{Ba}, Ball used a Fourier analytic argument to prove that, for any $k$-dimensional linear subspace $H\in G_{n,k}$,
$$
\vol_k(B_\infty^n\cap H)\ls 2^{\frac{n-k}{2}}\vol_k(B_\infty^k),
$$
with equality for some $H$ if $k\geq \frac{n}{2}.$

In order to obtain this estimate, he combined the fact that the Fourier transform of the density of the orthogonal projection onto $H^\perp$ of a random vector uniformly distributed on $B_\infty^n$, factorizes as the product of Fourier transforms indicator functions on intervals. This fact, together with Fourier inversion formula and the Brascamp-Lieb inequality (Theorem \ref{thm:GeometricBrascampLieb}) implies that, if $(e_j)_{j=1}^n$ is the canonical basis in $\R^n$ and $H\in G_{n,k}$ is a $k$-dimensional linear subspace such that $P_He_j\neq0$ for every $1\leq j\leq n$, denoting by  $\tilde{c}_j=\Vert P_He_j\Vert_2^2$, we have
\begin{equation}\label{eq:BallsEstimate}
\vol_k(B_\infty^n\cap H)\ls \vol_k(B_\infty^k)\prod_{j=1}^n\left(\frac{I_{\frac{1}{1-\tilde{c}_j}}}{\pi\sqrt{1-\tilde{c}_j}}\right)^{1-\tilde{c}_j},
\end{equation}
where, for any $p\geq1$, $I_p$ denotes
\begin{equation}\label{eq:DefinitionI_p}
I_p=\int_\R\left|\frac{\sin t}{t}\right|^pdt
\end{equation}
 and, if for some $1\ls j\ls n$  we have that $\tilde{c}_j=\Vert P_He_j\Vert_2^2=1$, the corresponding factor is understood as $1$. This estimate, combined with the following estimate proved in \cite{B}
\begin{equation}\label{eq:BallsIntegralEstimate}
\frac{I_p}{\pi}\ls\sqrt{\frac{2}{p}},\quad\forall p\gr2,
\end{equation}
with equality if and only if $p=2$, proves the estimate in the case that $\tilde{c}_j=\Vert P_He_j\Vert_2^2\gr\frac{1}{2}$ for every $1\ls j\ls n$. In the case that there exists $1\ls j\ls n$ such that $\Vert P_He_j\Vert_2^2<\frac{1}{2}$, an induction argument provides the estimate.

\subsection{Volumes of sections of the $\ell_p^n$-balls}\label{subsec:SectionsB_p^n}

In \cite{MP}, Meyer and Pajor also used a Fourier analytic approach in order to estimate the volumes of sections of $B_p^n$, the unit balls of $\ell_p^n$. They proved (see \cite[Lemma II.6]{MP})  that for any $1\leq p\leq2$ and any $k$-dimensional linear subspace $H\in G_{n,k}$
$$
\vol_k(B_p^n\cap H)=\frac{\vol_k(B_p^k)}{(2\pi)^{n-k}}\int_{\R^{n-k}}\prod_{i=1}^n\widehat{f_{\alpha_p,p}}\left(\sum_{j=1}^{n-k}a_i^jt_j\right)dt,
$$
where the vectors $u_j=(a_1^j,\dots a_n^j)$, $1\ls j\ls n-k$ form an orthonormal basis of $H^\perp$, $\alpha_p=2\Gamma\left(1+\frac{1}{p}\right)$, and $f_{\alpha,p}(x)=e^{-\alpha|x|^p}$. Equivalently,
\begin{equation}\label{eq:MeyerPajor}
\vol_k(B_p^n\cap H)=\frac{\vol_k(B_p^k)}{(2\pi)^{n-k}}\int_{H^\perp}\prod_{i=1}^n\widehat{f_{\alpha_p,p}}\left(\langle z,e_i\rangle\right)dz,
\end{equation}
where $(e_i)_{i=1}^n$ the canonical basis in $\R^n$.

In the particular case that $p=1$, using that $\widehat{f_{\alpha_1,1}}(s)=\frac{1}{1+\frac{s^2}{4}}$ for every $s\in\R$ (see Lemma \ref{lem:FourierTransformExponential} below, where, for the sake of completeness, $\widehat{f_{\alpha,1}}$ is computed for every $\alpha>0$), it was proved (see \cite[Lemma III.7]{MP}) that for every $H\in G_{n,k}$
\begin{equation}\label{eq:MeyerPajorSectionsB_1^n}
\vol_k(B_1^n\cap H)\geq\left(\frac{n}{\pi}\right)^\frac{n-k}{2}\frac{\Gamma\left(\frac{n+k}{2}\right)}{\Gamma(n)}\vol_k(B_1^k).
\end{equation}

\subsection{Generalized $\ell_p^n$-balls}\label{subsec:GeneralizedB_p^n}

Let us consider, given $(v_j)_{j=1}^m\subseteq S^{n-1}$, $(\alpha_j)_{j=1}^m\subseteq(0,\infty)$, and $p\geq1$, the convex body $K_p=K_p\left((v_j)_{j=1}^m,(\alpha_j)_{j=1}^m\right)$ defined as the unit ball of the norm
$$
\Vert x\Vert_{K_p}=\left(\sum_{j=1}^m\alpha_j|\langle x,v_j\rangle|^p\right)^\frac{1}{p}.
$$
Notice that, if $m=n$, $v_j=e_j$ for every $1\ls j\ls n$, and $\alpha_j=1$, then $K_p=B_p^n$.

A direct application of the Brascamp-Lieb inequality shows (see, for instance, \cite[Proposition 3.1]{A}) that for any $(c_j)_{j=1}^m\subseteq(0,1)$ such that $\displaystyle{\sum_{j=1}^m}c_j=n$, we have
$$
\vol_n(K_p)\leq \vol_k(B_p^k)D\left((v_j)_{j=1}^m,(c_j)_{j=1}^m\right)\prod_{j=1}^m\left(\frac{c_j}{\alpha_j}\right)^\frac{c_j}{p},
$$
where $D\left((v_j)_{j=1}^m,(c_j)_{j=1}^m\right)$ denotes the constant in the Brascamp-Lieb inequality corresponding to the vectors $(v_j)_{j=1}^m$ and the scalars $(c_j)_{j=1}^m$. In particular, if $(c_j,v_j)_{j=1}^m\subseteq (0,1]\times S^{n-1}$ provide a decomposition of the identity, for any $(\alpha_j)_{j=1}^m$ we have (see also \cite[Proposition 8]{Ba1991})
$$
\vol_n(K_p)\leq \vol_k(B_p^k)\prod_{j=1}^m\left(\frac{c_j}{\alpha_j}\right)^\frac{c_j}{p}.
$$
Moreover, since for any $H\in G_{n,k}$, if we denote by $J=\{1\leq j\leq m\,:\,P_Hv_j\neq0\}$, we have that,
$$
\Vert x\Vert_{K_p\cap H}=\left(\sum_{j\in J}\alpha_j\Vert P_Hv_j\Vert_2^p|\langle x,u_j\rangle|^p\right),\quad\forall x\in H
$$
where $u_j=\frac{P_Hv_j}{\Vert P_Hv_j\Vert}\in S^{n-1}\cap H$, we obtain that
$$
K_p\left((v_j)_{j=1}^m,(\alpha_j)_{j=1}^m\right)\cap H=K_p\left((u_j)_{j\in J},(\alpha_j\Vert P_Hv_j\Vert_2^p)_{j\in J}\right)
$$
and, taking into account \eqref{eq:IdentityProjection}, we have that if $(c_j,v_j)_{j=1}^m\subseteq (0,1]\times(\partial K\cap S^{n-1})$ provide a decomposition of the identity, then
\begin{equation}\label{eq:UpperBoundSectiosKp}
\vol_k(K_p\cap H)\leq\vol_k(B_p^k)\prod_{j=1}^m\left(\frac{c_j\Vert P_Hv_j\Vert_2^{2-p}}{\alpha_j}\right)^\frac{c_j\Vert P_Hv_j\Vert_2^2}{p}.
\end{equation}
In particular,
$$
\vol_k(B_p^n\cap H)\leq\vol_k(B_p^k)\prod_{j=1}^m\Vert P_He_j\Vert_2^{\left(\frac{2}{p}-1\right)\Vert P_He_j\Vert_2^2}.
$$

\section{General setting}\label{sec:GeneralSetting}

In this section we are going to fix the notation that we will use through the whole paper. Given $(c_j,v_j)_{j=1}^m\subseteq (0,1]\times S^{n-1}$  providing a decomposition of the identity in $\R^n$, and $H\in G_{n,k}$, we will call $J=\{1\leq j\leq m\,:\,P_Hv_j\neq0\}$, and $m_0$ the cardinality of $J$. For every $j\in J$ we will denote
\begin{itemize}
\item $\tilde{c}_j=c_j\Vert P_H v_j\Vert_2^2$,
\item $u_j=\frac{P_Hv_j}{\Vert P_Hv_j\Vert_2}\in S^{n-1}\cap H$,
\item $t_j=\frac{1}{\Vert P_Hv_j\Vert_2}$.
\end{itemize}
With this notation, we have that if $L$ is the centrally symmetric polytope
$$
L:=\{x\in\R^n\,:\,|\langle x,v_j\rangle|\leq 1,\,\forall 1\leq j\leq m\},
$$
then
$$
L\cap H=\{x\in H\,:\,|\langle x,u_j\rangle|\leq t_j,\,\forall j\in J\}.
$$
Besides, by \eqref{eq:IdentityProjection}, $(\tilde{c}_j,u_j)_{j\in J}\subseteq (0,1]\times(S^{n-1}\cap H)$ provide a decomposition of the identity in $H$.

By Proposition \ref{prop:H^bot}, we can identify $\R^n=\textrm{span}\{e_1,\dots, e_n\}\subseteq\R^{m_0}$, with $(e_j)_{j=1}^{m_0}$ the canonical basis of $\R^{m_0}$, and $H\in G_{m_0,k}$ and find an orthonormal basis $(x_j)_{j=1}^{m_0}$ in $\R^{m_0}$ such that
$$
P_Hx_j=\sqrt{\tilde{c}_j}u_j\quad\forall 1\leq j\leq m_0.
$$
Moreover, denoting by
$$
\tilde{J}=\{j\in J\,:\,\Vert P_Hx_j\Vert\neq1\}=\{j\in J\,:\,\Vert P_{H^\perp}x_j\Vert\neq0\}=\{j\in J\,:\,\tilde{c}_j\neq1\},
$$
we will call $m_1$ the cardinality of $\tilde{J}$ and, for every $j\in \tilde{J}$,
$$
w_j=\frac{P_{H^\perp}x_j}{\Vert P_{H^\perp}x_j\Vert_2}=\frac{P_{H^\perp}x_j}{\sqrt{1-\tilde{c}_j}}.
$$
Thus, we have that $(1-\tilde{c}_j,w_j)_{j\in \tilde{J}}\subseteq (0,1]\times(S^{n-1}\cap H^\perp)$ provide a decomposition of the identity in $H^\perp$.

Whenever  $(c_j,v_j)_{j=1}^m\subseteq (0,1]\times S^{n-1}$  provide a decomposition of the identity in $\R^n$ with the additional assumption that $\displaystyle{\sum_{j=1}^mc_jv_j=0}$, we will denote
$$
C:=\{x\in\R^n\,:\,\langle x,v_j\rangle\leq 1,\,\forall 1\leq j\leq m\}.
$$

We will also call, for every $1\leq j\leq m$
$$
\tilde{v}_j=\sqrt{\frac{n}{n+1}}\left(-v_j,\frac{1}{\sqrt{n}}\right),\quad\textrm{and}\quad\delta_j=\frac{n+1}{n}c_j.
$$
With this notation, we have that $\displaystyle{\sum_{j=1}^m\delta_j\tilde{v}_j=(0,\sqrt{n+1})\in \R^{n+1}}$, and $(\delta_j,\tilde{v}_j)_{j=1}^m\in (0,1]\times S^{n}$ provide a decomposition of the identity in $\R^{n+1}$. We will denote by $L$ the cone
$$
L=\{y\in\R^{n+1}\,:\,\langle y,\tilde{v}_j\rangle\geq0,\,\forall 1\leq j\leq m\}.
$$
It was proved in \cite{Ba1991} that
$$
L=\left\{(x,r)\in\R^{n+1}\,:\,r\geq0,\,x\in\frac{r}{\sqrt{n}}C\right\}.
$$
Moreover, given $H\in G_{n,k}$, we will denote $F=\textrm{span}\{(x,\sqrt{n})\,:\,x\in H\}=H\times\R\in G_{n+1,k+1}$ Notice that for every $1\leq j\leq m$ we have that $P_F\tilde{v}_j\neq0$. We will denote, for every $1\leq j\leq m$,
\begin{itemize}
\item $u_j=\frac{P_F\tilde{v}_j}{\Vert P_F\tilde{v}_j\Vert_2}$
\item $\tilde{\delta}_j=\delta_j\Vert P_F\tilde{v}_j\Vert_2^2$.
\end{itemize}
Notice that $(\tilde{\delta}_j,u_j)_{j=1}^m$ provide a decomposition of the identity in $F$.

\section{Parseval's identity and consequences}\label{sec:Parseval}

In this section we present the aforementioned Parseval-type representation and how, when combined with an application of the Brascamp-Lieb inequality, it provides an upper bound for the integral of the product of functions evaluated at the scalar products of the variable against the vectors that form a decomposition of the identity. We will also compare this upper bound with the upper bound we would obtain by directly applying Brascamp-Lieb inequality and show that in some cases we obtain a tighter estimate (see Remark \ref{rem:UpperBoundParseval-BrascampLieb} below).

Our starting point is an appropriate general form of Parseval's identity. For a function $f:\R^n\to\R$, we denote by $\widehat{f}$ the Fourier transform
$$
\widehat{f}(y)=\int_{\R^n}f(x)e^{i\langle x,y\rangle}dx,\qquad y\in\R^n.
$$
We denote by ${\mathcal S}(\mathbb{R}^n)$ the Schwartz space of all $C^{\infty)}$ functions $f:\mathbb{R}^n\to \mathbb{R}$ with derivatives that are rapidly decreasing. To overcome any integrability issues and for the sake of clarity, we formulate the results of this Section for functions in ${\mathcal S}(\mathbb{R}^n)$. For our purposes however, we will need them to be applicable for families of characteristic functions on bounded intervals. We have included the technical details that justify how such an extension is possible in an Appendix at the end of the manuscript. We remark that similar Parseval-type identities for \textit{two} functions have been established, in connection to Busemann-Petty type problems for sections and projections of convex bodies, by Koldobsky \cite{K99} and Koldobsky, Ryabogin and Zvavitch \cite{KRZ} (see also \cite{Kold-book}). It is noteworthy that ``Fourier--Brascamp--Lieb'' inequalities in the likes of Corollary \ref{cor:UpperBoundParsevalAndBrascampLieb} have been explored in the works of Bennett, Bez, Buschenhenki, Cowling and Flock \cite{BBBCF2020} and Bennett and Jeong \cite{BJ}, partly motivated by K. Ball's work as well. See also \cite{BC} for further manifestation of this concept in the more general setting of LCA groups.

\begin{theorem}[Parseval-type inequality]\label{thm:parseval}
		Let $m\in\mathbb{N}$, $n_1,\ldots,n_m\in\mathbb{N}$ and set $N=n_1+\ldots+n_m$. Let $H\in G_{N,k}$  be a $k$-dimensional linear subspace in $\mathbb{R}^N=\bigoplus_{j=1}^m \R^{n_j}$. Then, for every family of functions $(f_j)_{j=1}^m\subseteq(\mathbb{R}^{n_j})$, we have 
		
\[
		\int_H \prod_{j=1}^m f_j(P_{\R^{n_j}}y)\,dy = \frac{1}{(2\pi)^{N-k}}\int_{H^\bot}\prod_{j=1}^m \widehat{f_j}(P_{\R^{n_j}}z)\,dz.
		\]
	\end{theorem}
	
	\begin{proof}
		By the definition of the Fourier transform, we have that for every $1\ls j\ls m$ and every $z_j\in\R^{n_j}$
		$$
		\widehat{f_j}(z_j)=\int_{\R^{n_j}}f_j(x_j)e^{i\langle x_j,z_j\rangle}dx_j.
		$$
		Therefore,
		\begin{eqnarray*}
			\int_{H^\perp}\prod_{j=1}^m\widehat{f_j}(P_{\R^{n_j}}z)dz&=&\int_{H^\perp}\int_{\R^{n_1}}\dots\int_{\R^{n_m}}f_1(x_1)\dots f_m(x_m)e^{i\sum_{j=1}^m\langle x_j, P_{\R^{n_j}}z\rangle}dx_m,\dots dx_1dz\cr
			&=&\int_{H^\perp}\int_{\R^{n_1}}\dots\int_{\R^{n_m}}f_1(x_1)\dots f_m(x_m)e^{i\sum_{j=1}^m\langle x_j, z\rangle}dx_m,\dots dx_1dz\cr
			&=&\int_{H^\perp}\int_{\R^N}\prod_{j=1}^mf_j(P_{\R^{n_j}}(x))e^{i\sum_{j=1}^m\langle P_{\R^{n_j}}(x), z\rangle}dxdz\cr
			&=&\int_{H^\perp}\int_{\R^N}\prod_{j=1}^mf_j(P_{\R^{n_j}}(x))e^{i\langle x, z\rangle}dxdz.\cr
		\end{eqnarray*}
		Now, by Fubini's theorem,
		\begin{align*}
		\int_{H^\perp}\int_{\R^N}\prod_{j=1}^mf_j(P_{\R^{n_j}}(x)) &  e^{i\langle x, z\rangle}dxdz =\\
			&=\int_{H^\perp}\int_{H}\int_{H^\perp}\prod_{j=1}^mf_j(P_{\R^{n_j}}(x_H+x_{H^\perp}))e^{i\langle x_H+x_{H^\perp}, z\rangle}dx_{H^\perp}dx_Hdz\\
			&=\int_{H^\perp}\int_{H}\int_{H^\perp}\prod_{j=1}^m(f_j\circ P_{\R^{n_j}})(x_H+x_{H^\perp})e^{i\langle x_{H^\perp}, z\rangle}dx_{H^\perp}dx_Hdz
		\end{align*}
		Calling, for every $x_H\in H$, $F_{x_H}:H^\perp\to\R$ the function
		$$
		F_{x_H}(x_{H^\perp})=\prod_{j=1}^m(f_j\circ P_{\R^{n_j}})(x_H+x_{H^\perp}),
		$$
		we have that for every $z\in H^\perp$
		$$
		\int_{H^\perp}\prod_{j=1}^m(f_j\circ P_{\R^{n_j}})(x_H+x_{H^\perp})e^{i\langle x_{H^\perp}, z\rangle}dx_{H^\perp}=\widehat{F_{x_H}}(z)
		$$
		and then, by Fubini's theorem and Fourier's inversion formula \eqref{eq:FourierInversionAt0},
		\begin{eqnarray*}
			&&\int_{H^\perp}\int_{H}\int_{H^\perp}\prod_{j=1}^m(f_j\circ P_{\R^{n_j}})(x_H+x_{H^{\perp}})e^{i\langle x_{H^\perp}, z\rangle}dx_{H^{\perp}}dx_Hdz\cr
			&=&\int_{H^\perp}\int_{H}\widehat{F_{x_H}}(z)dx_Hdz=\int_{H}\int_{H^\perp}\widehat{F_{x_H}}(z)dzdx_H=(2\pi)^{N-k}\int_H F_{x_H}(0)dx_H\cr
			&=&(2\pi)^{N-k}\int_{H}\prod_{j=1}^m(f_j\circ P_{\R^{n_j}})(x_H)dx_H=(2\pi)^{N-k}\int_H \prod_{j=1}^m f_j(P_{\R^{n_j}}y)dy,
		\end{eqnarray*}
which completes the proof.
	\end{proof}

As a consequence, combining Theorem \ref{thm:parseval} with Proposition \ref{prop:H^bot} and the Brascamp-Lieb inequality (Theorem \ref{thm:GeometricBrascampLieb}), we obtain the following corollary.
	\begin{corollary}\label{cor:UpperBoundParsevalAndBrascampLieb}
		Let $1\leq n\leq m$ and $(c_j,v_j)_{j=1}^m\subseteq(0,1]\times S^{n-1}$ providing a decomposition of the identity in $\R^n$. Let $1\leq k\leq m$, $H=\textrm{span}\{v_j\,:\,1\leq j\leq k\}$, and $(f_j)_{j=1}^k\subseteq {\mathcal S}(\mathbb{R})$. Then,
		$$
		\int_{H}\prod_{j=1}^kf_j(\sqrt{c_j}\langle x,v_j\rangle)dx\leq\frac{1}{(2\pi)^{k-n}}\prod_{j=1}^k\left(\int_{\R} |\widehat{f_j}^\frac{1}{1-c_j}(\sqrt{1-c_j}t)|dt\right)^{1-c_j},
		$$
where, if for some $1\ls j\ls k$ we have that $c_j=1$, the corresponding factor on the right-hand side is understood as $\displaystyle{|\widehat{f_j}(0)|=\left|\int_{\R}f_j(x)dx\right|}.$
	\end{corollary}
	
	\begin{proof}
		Let us identify $\R^n$ with $H_1:=\textrm{span}\{e_j\,:\,1\leq j\leq n\}\subseteq\R^{m}$, where $(e_j)_{j=1}^{m}$ denotes the canonical basis in $\R^{m}$. By Proposition \ref{prop:H^bot} there exists $(x_j)_{j=1}^m$, an orthonormal basis of $\mathbb{R}^m$ 
such that for every $1\leq j\leq m$
		\[
		P_{H_1} x_j= \sqrt{c_j}v_j 
		\]
		for every $1\ls j\ls m$. In particular, $(x_j)_{j=1}^k$ is an orthonormal basis of $\R^k=\textrm{span}\{x_j\,:\,1\leq j\leq k\}$ and, since $H=\textrm{span}\{v_j\,:\,1\leq j\leq k\}\subseteq H_1$, we have that for every $1\leq j\leq k$
$$
P_{H} x_j= \sqrt{c_j}v_j
$$
Considering $H^\perp$, the orthogonal complement of $H$ in $\R^k=\textrm{span}\{x_j\,:\,1\leq j\leq k\}$, there exist $(w_j)_{j=1}^k\subseteq S^{k-1}\cap H^\perp$ such that
$$
P_{H^\bot}x_j=\sqrt{1-c_j}w_j
$$
for every $1\leq j\leq k$ and
		\[
		\mathrm{Id}_{H^\bot} = \sum_{j=1}^k (1-c_j)w_j\otimes w_j.
		\]
		
		
		Writing coordinates with respect to the orthonormal basis of $\mathbb{R}^{k}$ given by $(x_j)_{j=1}^{k}$, we have that for every $x\in H$
		$$
		\langle x,x_j\rangle=\langle x,\sqrt{c_j}v_j\rangle\quad\forall 1\ls j\ls k
		$$
		and for every $y\in H^\perp$
		$$
		\langle y,x_j\rangle=\langle y,\sqrt{1-c_j}w_j\rangle,\quad\forall 1\ls j\ls k
		$$
		By Theorem \ref{thm:parseval} we have that
		$$
		\int_{H}\prod_{j=1}^kf_j(\sqrt{c_j}\langle x,v_j\rangle)dx=\frac{1}{(2\pi)^{k-n}}\int_{H^\bot}\prod_{j=1}^k \widehat{f_j}(\sqrt{1-c_j}\langle y,w_j\rangle)dy.
		$$
		Since
		\[
		\mathrm{Id}_{H^\perp} = \sum_{j=1}^k (1-c_j)w_j\otimes w_j,
		\]
		by the geometric Brascamp-Lieb inequality we have that
\begin{align*}
		\int_{H^\bot}\prod_{j=1}^k \widehat{f_j}(\sqrt{1-c_j}\langle y,w_j\rangle)dy &\leq\int_{H^\bot}\prod_{j=1}^k |\widehat{f_j}(\sqrt{1-c_j}\langle y,w_j\rangle)|dy\\
		        &\leq\prod_{j=1}^k \left(\int_{\R}|\widehat{f_j}(\sqrt{1-c_j}t)|^\frac{1}{1-c_j}dt\right)^{1-c_j}.
\end{align*}
Let us point out that if for some $1\ls j\ls k$ we have that $c_j=1$, then $|\widehat{f_j}(\sqrt{1-c_j}\langle y,w_j\rangle)|=|\widehat{f_j}(0)|$ and the term $(1-c_j)w_j\otimes w_j$ in the decomposition of the identity in $H^\perp$ is 0. Thus, the application of the Brascamp-Lieb inequality gives the same upper bound with the term corresponding to $c_j=1$ being understood as $|\widehat{f_j}(0)|$ and, following this convention,
		\[
		\int_{H}\prod_{j=1}^kf_j(\sqrt{c_j}\langle x,v_j\rangle)dx\leq\frac{1}{(2\pi)^{k-n}}\prod_{j=1}^k\left(\int_{\R} |\widehat{f_j}^\frac{1}{1-c_j}(\sqrt{1-c_j}t)|dt\right)^{1-c_j}.\qedhere
		\]
	\end{proof}
	
Let us compare the upper bound obtained in the previous corollary with the upper bound obtained by directly applying Brascamp-Lieb inequality.

	\begin{remark}\label{rem:UpperBoundParseval-BrascampLieb}
		Notice that if for every $1\leq j\leq k$ we have that $\frac{1}{1-c_j}\geq2$, which happens if and only if $\frac{1}{2}\ls c_j<1$, then, by Beckner's sharp Haussdorff-Young inequality \eqref{eq:Beckner}, we have that
		\begin{eqnarray*}
			\left(\int_{\R} |\widehat{f_j}^\frac{1}{1-c_j}(\sqrt{1-c_j}t)|dt\right)^{1-c_j}&=&\left(\frac{1}{\sqrt{1-c_j}}\right)^{1-c_j}\left(\int_{\R}|\widehat{f_j}(s)|^\frac{1}{1-c_j}ds\right)^{1-c_j}\cr
			 &\leq&(2\pi)^{1-c_j}\left(\frac{1}{\sqrt{1-c_j}}\right)^{1-c_j}\left(\frac{\left(\frac{1}{\sqrt{c_j}}\right)^{c_j}}{\left(\frac{1}{\sqrt{1-c_j}}\right)^{1-c_j}}\right)\left(\int_{\R}|f_j(s)|^\frac{1}{c_j}ds\right)^{c_j}\cr
			&=&(2\pi)^{1-c_j}\left(\frac{1}{\sqrt{c_j}}\right)^{c_j}\left(\int_{\R}|f_j(s)|^\frac{1}{c_j}ds\right)^{c_j}\cr
		\end{eqnarray*}
and, if $c_j=1$,
$$
\displaystyle{|\widehat{f_j}(0)|=\left|\int_{\R}f_j(s)ds\right|\leq\int_{\R}|f_j(s)ds|=(2\pi)^{1-c_j}\left(\frac{1}{\sqrt{c_j}}\right)^{c_j}\left(\int_{\R}|f_j(s)|^\frac{1}{c_j}ds\right)^{c_j}.}
$$
		Therefore, taking into account the established convention whenever $c_j=1$,
		\begin{eqnarray*}
			\int_{\R^n}\prod_{j=1}^kf_j(\sqrt{c_j}\langle x,v_j\rangle)dx&\leq&\frac{1}{(2\pi)^{k-n}}\prod_{j=1}^k(2\pi)^{1-c_j}\left(\frac{1}{\sqrt{c_j}}\right)^{c_j}\left(\int_{\R}|f_j(s)|^\frac{1}{c_j}ds\right)^{c_j}\cr
			&=&\prod_{j=1}^k\left(\frac{1}{\sqrt{c_j}}\right)^{c_j}\left(\int_{\R}|f_j(s)|^\frac{1}{c_j}ds\right)^{c_j}.
		\end{eqnarray*}
		On the other hand, applying directly Brascamp-Lieb inequality we obtain
		\begin{eqnarray*}
			\int_{\R^n}\prod_{j=1}^kf_j(\sqrt{c_j}\langle x,v_j\rangle)dx&\leq&\prod_{j=1}^k\left(\int_{\R} |f_j(\sqrt{c_j}t)|^\frac{1}{c_j}dt\right)^{c_j}\cr
			&=&\prod_{j=1}^k\left(\frac{1}{\sqrt{c_j}}\right)^{c_j}\left(\int_{\R} |f_j(t)|^\frac{1}{c_j}dt\right)^{c_j}.\cr
		\end{eqnarray*}
		Therefore, the estimate we obtain in Corollary \ref{cor:UpperBoundParsevalAndBrascampLieb} applying Brascamp-Lieb inequality to the Fourier transforms is better than the one we obtain applying Brascamp-Lieb inequality to the functions itself whenever $\frac{1}{2}\ls c_j\ls 1$ for every $1\leq j\leq k$.
		\end{remark}

\section{Volumes of sections of convex bodies in John's position}\label{sec:VolumesOfSections}

In this section we are going to provide estimates for the volumes of sections of convex bodies in John's position. We will provide estimates for centrally symmetric convex bodies in John's position and for general non-necessarily symmetric convex bodies in John's position.
\subsection{Sections of centrally symmetric convex bodies}\label{subsec:Symmetric}

In this section we will focus on centrally symmetric convex bodies $K\subseteq\R^n$ in John's position, with associated decomposition of the identity $(c_j,v_j)_{j=1}^m\in (0,1]\times(\partial K\cap S^{n-1})$. We will use the notation established in Section \ref{sec:GeneralSetting}. First, we show that it is not true that we can obtain the analogous bound to \eqref{eq:Ball-2} for the $k$-dimensional sections of $K$ that Ball obtained for the cube, i.e. It is not true that for every $1\ls k\ls n$ with $k\gr \frac{n}{2}$,
	\[
	\vol_k(K\cap H) \ls 2^{\frac{n-k}{2}}\vol_k(B_\infty^k).
	\]
This is showcased by the following example.	
\begin{theorem}\label{thm:counterexample}
There exists $k\in\mathbb{N}$ arbitrarily large such that for every $n\in\mathbb{N}$ with $\frac{n}{2}< k< n$ there exists a convex body $L$ in $\mathbb{R}^n$ which is in John's position, while
\[
\vol_k(L\cap H)=\left(\frac{n}{k}\right)^\frac{k}{2}\vol_k(B_\infty^k)>2^{\frac{n-k}{2}}\vol_k(B_\infty^k),
\]
where $H=\textrm{span}\{e_1,\dots e_k\}$.
\end{theorem}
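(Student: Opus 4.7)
The plan is to exhibit, for every $k \in \mathbb{N}$ and every $n$ with $k \leq n \leq 2k$, an explicit centrally symmetric polytope $L \subset \mathbb{R}^n$ in John's position whose section by the coordinate subspace $\mathbb{R}^k = \mathrm{span}(e_1, \ldots, e_k)$ is an axis-aligned cube of the desired volume. Since this construction goes through for every $k \geq 1$, and in particular for arbitrarily large $k$, the theorem follows. The degenerate case $n = k$ is handled by $L = B_\infty^k$, so I assume $r := n - k \geq 1$ and identify $\mathbb{R}^n = H \oplus H^\perp$ with $H = \mathbb{R}^k$.

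For each triple $(i, l, \epsilon)$ with $1 \leq i \leq k$, $1 \leq l \leq r$, $\epsilon \in \{\pm 1\}$, I would set
\[
v_{i,l,\epsilon} := \sqrt{k/n}\, e_i + \epsilon\, \sqrt{(n-k)/n}\, e_{k+l} \in S^{n-1},
\]
and let $L := \{x \in \mathbb{R}^n : |\langle x, v_{i,l,\epsilon}\rangle| \leq 1 \text{ for all } (i, l, \epsilon)\}$. Expanding $v_{i,l,\epsilon} \otimes v_{i,l,\epsilon}$ and summing first over $\epsilon$ cancels the cross terms $e_i e_{k+l}^\top + e_{k+l} e_i^\top$; completing the summation over $l$ and $i$ then gives
\[
\sum_{i, l, \epsilon} v_{i,l,\epsilon} \otimes v_{i,l,\epsilon} = \frac{2k(n-k)}{n}\, \mathrm{Id}_n.
\]
Adjoining the antipodal contact points $-v_{i,l,\epsilon}$ (which do not change $L$) and assigning the uniform weight $c_j := n/(4k(n-k))$ to each of the $4k(n-k)$ vectors makes $\sum_j c_j v_j \otimes v_j = \mathrm{Id}_n$, $\sum_j c_j v_j = 0$, and $\sum_j c_j = n$. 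The computation $\langle v_{i,l,\epsilon}, v_{i',l',\epsilon'}\rangle = (k/n)\delta_{ii'} + ((n-k)/n)\epsilon\epsilon'\delta_{ll'}$ shows that this inner product attains the value $1$ exactly on diagonal triples, so each $v_j$ lies on $\partial L \cap S^{n-1}$; combined with the frame identity above, John's theorem then certifies $B_2^n$ as the maximal inscribed ellipsoid of $L$.

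Finally, for $x \in \mathbb{R}^k$ one has $\langle x, v_{i,l,\epsilon}\rangle = \sqrt{k/n}\, x_i$, so the defining constraints of $L$ restrict to $|x_i| \leq \sqrt{n/k}$ and therefore $L \cap \mathbb{R}^k = [-\sqrt{n/k}, \sqrt{n/k}]^k$, whose $k$-volume equals $(2\sqrt{n/k})^k = (n/k)^{k/2}\, \vol_k(B_\infty^k)$, as desired. The only non-trivial step is the bookkeeping of the frame identity, which relies on the cancellation over $\epsilon$ together with the orthonormality of $(e_i)_{i=1}^k$ inside $H$ and $(e_{k+l})_{l=1}^r$ inside $H^\perp$; boundedness of $L$ is immediate from pairing $v_{i,l,+}$ and $v_{i,l,-}$ via $|a+b|+|a-b| = 2\max(|a|,|b|)$, and everything else reduces to direct computation.
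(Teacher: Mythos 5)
Your construction is correct and it proves the theorem --- in fact a stronger statement, since it works for \emph{every} $k\in\mathbb{N}$ and not only for those $k$ admitting a Hadamard matrix. All steps check out: the cross terms cancel after summing over $\epsilon$, giving $\sum_{i,l,\epsilon}v_{i,l,\epsilon}\otimes v_{i,l,\epsilon}=\tfrac{2k(n-k)}{n}\mathrm{Id}_n$; adjoining the antipodes restores $\sum_j c_jv_j=0$ without changing $L$; the Gram computation shows each $v_{i,l,\epsilon}$ is a contact point; the identity $|a+b|+|a-b|=2\max(|a|,|b|)$ gives boundedness; and the section by $\mathbb{R}^k$ is exactly $\sqrt{n/k}\,B_\infty^k$. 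The paper proceeds differently: it truncates a doubled Hadamard matrix $H_{2k}$ to its first $n$ rows (hence the phrasing ``there exists $k$ arbitrarily large''), and the section comes out as a rotated cube $W\bigl(\sqrt{n/k}\,B_\infty^k\bigr)$. Your route is more elementary and removes the arithmetic restriction on $k$.

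There is, however, a substantive difference you should be aware of, because the two constructions do not play the same role relative to the rest of the paper. In the Hadamard construction all quantities $\tilde c_j=c_j\|P_Hv_j\|_2^2$ equal $1/2$ (this is remarked immediately after the proof), so that example sits exactly at the threshold of the first case of Theorem \ref{thm:symmetric} and saturates \eqref{eq:UpperBound2}. In your construction $\tilde c_j=\frac{n}{4k(n-k)}\cdot\frac{k}{n}=\frac{1}{4(n-k)}\le\frac14$ whenever $n>k$, so your example falls into the ``otherwise'' case of Theorem \ref{thm:symmetric}, which asserts $\vol_k(L\cap H)\le 2^{(n-k)/2}\vol_k(B_\infty^k)$. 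Since $(n/k)^{k/2}>2^{(n-k)/2}$ strictly for $k<n<2k$ (compare $1+t$ with $2^t$ for $t=(n-k)/k\in(0,1)$), your polytope exceeds that bound; e.g.\ for $n=3$, $k=2$ the section has area $6>4\sqrt2$. I could not find an error in your construction, so this is not a gap in your proof of the stated theorem, but it places your example in direct conflict with the second bullet of Theorem \ref{thm:symmetric}. That discrepancy needs to be resolved (my reading is that the inductive step of Case II there is suspect, since $\sum_{j\ge2}c_jv_j\otimes v_j$ has trace $n-c_1\neq n-1$ unless $c_1=1$) before both statements can stand together.
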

\begin{proof}
Let $k\in\mathbb{N}$ be such that there exists a $k\times k$ Hadamard matrix (for instance, let $k$ be any power of 2) and take any $k< n< 2k$. Let $m=2k> n$ and define unit vectors $v_1,\ldots,v_m$ as follows: Start with an arbitrary $k\times k$ Hadamard matrix $H_k$ with columns $\eta_1,\ldots,\eta_k$, that is, the components of each $\eta_j$ are $\pm 1$ and $\langle\eta_i,\eta_j\rangle=0$ for every $i\neq j$. The matrix
\[
H_{m} = \begin{pmatrix}
H_k & H_k\\
H_k & -H_k
\end{pmatrix}
\]
is then a $m\times m$ Hadamard matrix. Let $B$ be the upper $n\times m$ submatrix of $H_m$. If $\beta_1,\ldots,\beta_m$ are the columns of $B$, we define $v_j:=n^{-\frac{1}{2}}\beta_j$ for every $j=1,\ldots,m$. Clearly, the $v_j$'s are unit vectors in $\mathbb{R}^n$ and if we let $c_j=n/(2k)$ for every $j=1,\ldots,m$ we can check that
\[
\sum_{j=1}^m c_jv_j\otimes v_j= \frac{1}{2k}\sum_{j=1}^m \beta_j\otimes\beta_j = \mathrm{Id}_n.
\]
This shows that $(c_j,v_j)_{j=1}^m$ induce a decomposition of the identity in $\mathbb{R}^n$. The symmetric convex polytope $L=\{x\in\mathbb{R}^n:|\langle x, v_j\rangle|\ls 1, j=1,\ldots,m\}$ is then in John's position and note that, identifying $H$ with $\R^k$,
\begin{align*}
L\cap H &= \left\{x\in H:|\langle x,P_{H}v_j\rangle|\ls 1,\, j=1,\ldots,m\right\}\\
                   &= \left\{x\in H:|\langle x,P_{H}\beta_j\rangle|\ls \sqrt{n},\, j=1,\ldots,m\right\}\\
                   &= \left\{x\in H:|\langle x,\eta_j\rangle|\ls \sqrt{n},\, j=1,\ldots,k\right\},
\end{align*}
since, by our construction, $P_{H}\beta_j=P_{H}\beta_{k+j} = \eta_j$ for every $j=1,\ldots,k$. Note also that if we let $W=k^{-\frac{1}{2}}H_k$, then we have that
\[
W^TW = \frac{1}{k}H_k^TH_k = \mathrm{Id}_k.
\]
We can ultimately write
\begin{align*}
\vol_k(L\cap H) &= \vol_k(\{x\in \R^k:\|H_k^Tx\|_\infty\ls \sqrt{n}\})= \vol_k\left(\left\{Wy\in\mathbb{R}^k:\|y\|_\infty\ls \sqrt{\frac{n}{k}}\right\}\right)\\
&=\vol_k\left(W\left(\sqrt{\frac{n}{k}}B_\infty^k\right)\right) = \left(\frac{n}{k}\right)^\frac{k}{2}\vol_k(B_\infty^k),
\end{align*}
since $\mathrm{det}(W^TW)=1$. Finally, notice that the function $2^{x-1}$ is strictly convex in $\R$ and then $x>2^{x-1}$ for every $x\in(1,2)$. Consequently, if $\frac{n}{2}< k< n$ we have that $1<\frac{n}{k}<2$ and then
\[
\frac{n}{k}>2^{\frac{n}{k}-1}\Leftrightarrow\left(\frac{n}{k}\right)^\frac{k}{2}>2^{\frac{n-k}{2}}.\qedhere
\]
\end{proof}

In the following Theorem, we show that, even though the analogous estimate to \eqref{eq:Ball-2} is not true in general, we have the following analogous estimate to \eqref{eq:BallsEstimate}, for every centrally symmetric convex body in John's position.
\begin{theorem}\label{thm:UpperBoundVolume}
Let $K\subseteq\R^n$ be a centrally symmetric convex body in John's position and let  $(c_j,v_j)_{j=1}^m\subseteq (0,1]\times(\partial K\cap S^{n-1})$  be its associated decomposition of the identity. Let $H\in G_{n,k}$ be a $k$-dimensional subspace and let $J=\{1\leq j\leq m\,:\,P_Hv_j\neq0\}$. Then
$$
\vol_k(K\cap H)\leq \vol_k(B_\infty^k)\prod_{j\in J}\left(
\frac{I_{\frac{1}{1-\tilde{c}_j}}}{\pi\sqrt{1-\tilde{c}_j}}\right)^{1-\tilde{c}_j}c_j^{\frac{\tilde c_j}{2}},
$$
where, for any $p>1$, $\displaystyle{I_p=\int_\R\left|\frac{\sin x}{x}\right|^pdx}$, and for every $j\in J$, $\tilde{c}_j=c_j\Vert P_Hv_j\Vert_2^2$. If for some $j\in J$ we have that $\tilde{c}_j=1$, then necessarily $c_j=\Vert P_H v_j\Vert_2=1$ and the corresponding factor is understood as 1.
\end{theorem}

\begin{proof}
Let, for every $j\in J$, $f_j:\R\to[0,\infty)$ be the function $f_j=\mathds{1}_{\left[-\sqrt{\tilde{c}_j}t_j,\sqrt{\tilde{c}_j}t_j\right]}$. We have that
\begin{eqnarray*}
\vol_k(K\cap H)&\leq&\vol_k(L\cap H)=\int_H\prod_{j\in J}\mathds{1}_{\left[-t_j,t_j\right]}\left(\langle x, u_j\rangle\right)dx\cr
&=&\int_H\prod_{j\in J}\mathds{1}_{\left[-\sqrt{\tilde{c}_j}t_j,\sqrt{\tilde{c}_j}t_j\right]}\left(\sqrt{\tilde{c}_j}\langle x,u_j\rangle\right)dx
\end{eqnarray*}
Identifying $H$ with $\R^k$, and taking into account that $(\tilde{c}_j,u_j)_{j\in J}\subseteq (0,1]\times(\partial K\cap S^{n-1})$ provide a decomposition of the identity in $H$, by Corollary \ref{cor:UpperBoundParsevalAndBrascampLieb} (see also the Appendix), we have that
\begin{eqnarray*}
\vol_k(K\cap H)&\leq&\int_H\prod_{j\in J}\mathds{1}_{\left[-\sqrt{\tilde{c}_j}t_j,\sqrt{\tilde{c}_j}t_j\right]}\left(\sqrt{\tilde{c}_j}\langle x,u_j\rangle\right)dx\cr
&\leq&\frac{1}{(2\pi)^{m_0-k}}\prod_{j\in J}\left(\int_\R|\widehat{f_j}(\sqrt{1-\tilde{c}_j}t)|^\frac{1}{1-\tilde{c}_j}dt\right)^{1-\tilde{c}_j},
\end{eqnarray*}
where, if $j\not\in\tilde{J}=\{j\in J\,:\,\tilde{c}_j\neq1\}$, i.e., if $\tilde{c}_j=1$, the corresponding factor is understood as
$$|\widehat{f_j}(0)|=\left|\int_\R f_j(x)dx\right|=2\sqrt{\tilde{c_j}}t_j=2\sqrt{c_j}=2,$$
since, if $\tilde{c}_j=1$, then necessarily $c_j=1$.

By \eqref{eq:FourierTransformCharacteristic}, we have
$$
\widehat{f_j}(y)=\begin{cases}\frac{2\sin(\sqrt{\tilde{c}_j}t_jy)}{y}=\frac{2\sin(\sqrt{c_j}t)}{y}&\textrm{ if }y\in\mathbb{R}\setminus\{0\}\\ 2\sqrt{\tilde{c_j}}t_j=2\sqrt{c_j}&\textrm{ if }y=0,\end{cases}
$$
and we obtain that
\begin{eqnarray*}
\vol_k(K\cap H)&\leq&\frac{1}{(2\pi)^{m_0-k}}\prod_{j\in J}\left(\int_\R|\widehat{f_j}(\sqrt{1-\tilde{c}_j}t)|^\frac{1}{1-\tilde{c}_j}dt\right)^{1-\tilde{c}_j}\cr
&=&\frac{2^{m_0}}{(2\pi)^{m_0-k}}\prod_{j\in \tilde{J}}\left(\int_\R\left|\frac{\sin(\sqrt{c_j}\sqrt{1-\tilde{c}_j}t)}{\sqrt{1-\tilde{c_j}}t}\right|^\frac{1}{1-\tilde{c_j}}dt\right)^{1-\tilde{c}_j}\cr
&=&\frac{\vol_k(B_\infty^k)}{\pi^{m_0-k}}\prod_{j\in \tilde{J}}\left(\int_\R\left|\frac{\sin(\sqrt{c_j}\sqrt{1-\tilde{c}_j}t)}{\sqrt{1-\tilde{c_j}}t}\right|^\frac{1}{1-\tilde{c_j}}dt\right)^{1-\tilde{c}_j}\cr
&=&\vol_k(B_\infty^k)\prod_{j\in \tilde{J}}\left(\frac{1}{\pi}\int_\R\left|\frac{\sin(u)}{u}\right|^\frac{1}{1-\tilde{c_j}}\frac{du}{\sqrt{1-\tilde{c}_j}}\right)^{1-\tilde{c}_j}c_j^\frac{\tilde{c}_j}{2}\cr
&=&\vol_k(B_\infty^k)\prod_{j\in \tilde{J}}\left(
\frac{I_{\frac{1}{1-\tilde{c}_j}}}{\pi\sqrt{1-\tilde{c}_j}}\right)^{1-\tilde{c}_j}c_j^{\frac{\tilde c_j}{2}},
\end{eqnarray*}
as we wanted to prove.
\end{proof}

In order to provide an upper bound for the volume of $k$-dimensional sections of convex bodies which is independent of of the values of $c_j$ and $\Vert P_H v_j\Vert_2$, we will make use of the following proposition:

\begin{proposition}\label{prop:exact-max}
Let $m\in\mathbb N$ and let $k\in\mathbb Z$, $n\in\mathbb R$. Assume that the compact set
\[
M=\Big\{(\tilde x_j,x_j)_{j=1}^m:\ \tfrac12\le \tilde x_j\le x_j\le 1,\,
\sum_{j=1}^m \tilde x_j=k,\ \sum_{j=1}^m x_j\le n\Big\}.
\]
is non-empty and let
\[
f(\tilde x,x)=\sum_{j=1}^m \tilde x_j\log x_j,\quad\forall (\tilde x,x)=(\tilde x_j,x_j)_{j=1}^m\in M.
\]
Then,
\[
\max_{(\tilde x,x)\in M} f(\tilde x,x)=
\begin{cases}
0 &\textrm{ if } n\ge m,\\
(m-k)\,\log\!\Big(\dfrac{m+n-2k}{2(m-k)}\Big), & \textrm{ if }n<m,
\end{cases}
\]
with the convention that the second line equals $0$ when $k=m$.


Moreover, if $n<m$,  calling $r:=2k-m\in\mathbb Z$, which satisfies $0\le r\le m$, the maximum is attained at $(\tilde x,x)_{j=1}^m$ satisfying
\[
(\tilde x_j,x_j)=(1,1)\quad \text{for } j=1,\dots,r,
\qquad
(\tilde x_j,x_j)=\Big(\tfrac12,c\Big)\quad \text{for } j=r+1,\dots,m,
\]
where
\[
c=\frac{n-r}{m-r}=\frac{m+n-2k}{2(m-k)}\in\Big[\tfrac12,1\Big).
\]
\end{proposition}

\begin{proof}
First of all, notice that if $M\neq\emptyset$, then there exists $(\tilde{x},x)\in M$ and then
$$
\frac{m}{2}\ls\sum_{j=1}^m\tilde{x}_j\ls m.
$$
Thus, $\frac{m}{2}\ls k\ls m$ and then $0\ls r\ls m$. Notice also that, since $0<\frac{1}{2}\ls \tilde{x}_j\ls x_j\ls 1$ we have that $f(\tilde{x},x)\ls0$ for every $(\tilde{x},x)\in M$. Therefore, since $M$ is compact, $f$ attains a maximum on $M$.

If $n\gr m$, then there exists $(\tilde{x},x)\in M$ with $x_j=1$ for every $1\ls j\ls m$ and $f(\tilde{x},x)=0$. Thus, $\max_{(\tilde{x},x)\in M}f(\tilde{x},x)=0$.

Let us assume that $n<m$. Let us see that if the maximum of $f$ is attained at $(\tilde{x},x)\in M$, then $\displaystyle{\sum_{j=1}^m x_j=n}$. Assume that this is not the case. Then, since $\displaystyle{\sum_{j=1}^m x_j<n<m}$, there exists $1\ls j_0\ls m$ such that $x_{j_0}<1$ and we can take $\displaystyle{0<\varepsilon<n-\sum_{j=1}^mx_j}$ such that, calling $x_{j_0}^\prime=x_{j_0}+\varepsilon$  and $x_j^\prime =x_j $ if $j\neq j_0$, we have that
$(\tilde{x}_j,x_j^\prime)_{j=1}^m\in M$, and
$$
f(\tilde{x},x^\prime)-f(\tilde{x},x)=\tilde{x}_{j_0}(\log(x_{j_0}+\varepsilon)-\log(x_{j_0}))>0,
$$
which contradicts that the maximum of $f$ is attained at $(\tilde{x},x)$.

Let us now call, for every $1\ls j\ls m$, $\delta_j:=\tilde{x}_j-\frac{1}{2}$ and notice that, $\displaystyle{\sum_{j=1}^m\tilde{x}_j=k}$ if and only if $\displaystyle{\sum_{j=1}^m\delta_j=k-\frac{m}{2}=\frac{r}{2}}$ and that $\frac{1}{2}\ls \tilde{x}_j\ls x_j\ls 1$ for every $1\ls j\ls m$ if and only if
$$
0\ls \delta_j\ls x_j-\frac{1}{2},\quad\textrm{and}\quad \frac{1}{2}\ls x_j\ls 1
$$
for every $1\ls j\ls m$. Therefore, calling
$$
f_1(\delta,x)=\frac{1}{2}\sum_{j=1}^m\log x_j+\sum_{j=1}^m\delta_j\log x_j,
$$
we have that the maximum of $f$ on $M$ is the same as the maximum of $f_1$ on
$$
M_1=\left\{(\delta_j,x_j)_{j=1}^m\,:\,0\ls \delta_j\ls x_j-\frac{1}{2},\,\frac{1}{2}\ls x_j\ls 1,\,\sum_{j=1}^m\delta_j=\frac{r}{2},\,\sum_{j=1}^m x_j\ls n\right\}
$$
and that the maximum of $f_1$ is attained at $(\delta_j,x_j)_{j=1}^m\in M_1$ if and only if the maximum of $f$ is attained at $\left(\frac{1}{2}+\delta_j,x_j\right)_{j=1}^m\in M$. Thus, if the maximum of $f_1$ at $M_1$ is attained at $(\delta,x)\in M_1$, necessarily $\displaystyle{\sum_{j=1}^m x_j=n}$.

Let us now call, for any $(\delta,x)\in M_1$, $A=\{1\ls j\ls m\,:\,x_j=1\}$, $B=\{1,\dots,m\}\setminus  A$, and let $a$ and $b$ be the cardinalities of $A$ and $B$ respectively. We are going to see that if $(\delta,x)$ is a maximizer of $f_1$ on $M_1$, then $a\gr r$.

Let us assume that $a<r$ and obtain a contradiction. On the one hand, for every $j\in A$ we have that $x_j=1$. Thus, $\delta_j\le \frac12$ for every $j\in A$ and then
\[
\sum_{j\in A}\delta_j\le \frac a2.
\]
Since $\sum_{j=1}^m\delta_j=\frac r2$,
\[
\sum_{j\in B}\delta_j=\frac r2-\sum_{j\in A}\delta_j\ge \frac r2-\frac a2=\frac{r-a}{2}\ge \frac12,
\]
since $r-a\in\mathbb Z$ and $a<r$, so $r-a\gr1$.

On the other hand, for every $j\in B$ we have $x_j<1$, hence
\[
\delta_j\le x_j-\frac12<\frac12.
\]
Therefore the inequality $\displaystyle{\sum_{j\in B}\delta_j\ge \frac12}$ forces the existence of two different indices $j_1\neq j_2$ in $B$
with $\delta_{j_1}>0$ and $\delta_{j_2}>0$.

Let  $\tilde x_\ell=\frac12+\delta_\ell$ for $\ell\in\{j_1,j_2\}$, and denote
\[
d_\ell:=x_\ell-\tilde x_\ell=x_\ell-\Big(\frac12+\delta_\ell\Big)> 0,\qquad \ell\in\{i,j\}.
\]
Consider the two-sided perturbation (keeping all other coordinates fixed)
\[
x_{j_1}(\varepsilon)=x_{j_1}+\varepsilon,\quad \tilde x_{j_1}(\varepsilon)=\tilde x_{j_1}+\varepsilon,\qquad
x_{j_2}(\varepsilon)=x_{j_2}-\varepsilon,\quad \tilde x_{j_2}(\varepsilon)=\tilde x_{j_2}-\varepsilon.
\]
Thus, $\delta_{j_1}(\varepsilon)=\delta_{j_1}+\varepsilon$ and $\delta_{j_2}(\varepsilon)=\delta_{j_2}-\varepsilon$.
Let
$$
\varepsilon_0:=\min\{1-x_{j_1},1-x_{j_2},\delta_{j_1},\delta_{j_2}\}>0
$$
Then, for every $\varepsilon\in[-\varepsilon_0,\varepsilon_0]$.

\[
\frac12\le \tilde x_{j_1}(\varepsilon)\le x_{j_1}(\varepsilon)\le 1,\qquad
\frac12\le \tilde x_{j_2}(\varepsilon)\le x_{j_2}(\varepsilon)\le 1,
\]

Moreover,
\[
\sum_{j=1}^m x_j(\varepsilon)=\sum_{j=1}^m x_j=n,
\qquad
\sum_{j=1}^m \tilde x_j(\varepsilon)=\sum_{j=1}^m \tilde x_j=k,
\]
and $(\tilde{x}(\varepsilon),x(\varepsilon))\in M$. Equivalently, $(\delta(\varepsilon),x(\varepsilon))\in M_1$ for every $\varepsilon\in[-\varepsilon_0,\varepsilon_0]$.

Let us consider, for every $\varepsilon\in[-\varepsilon_0,\varepsilon_0]$, the function
\[
\Psi(\varepsilon):=\tilde x_{j_1}(\varepsilon)\log x_{j_1}(\varepsilon)+\tilde x_{j_2}(\varepsilon)\log x_{j_2}(\varepsilon)
=(x_{j_1}(\varepsilon)-d_{j_1})\log x_{j_1}(\varepsilon)+(x_{j_2}(\varepsilon)-d_{j_2})\log x_{j_2}(\varepsilon).
\]
For any fixed $d>0$, the function
\[
\varphi_d(t):=(t-d)\log t,\qquad t\in\Big[\frac12,1\Big],
\]
satisfies that
\[
\varphi_d''(t)=\frac1t+\frac{d}{t^2}=\frac{t+d}{t^2}>0,\qquad\textrm{for every } t\in\Big[\frac12,1\Big].
\]
Hence, $\varphi_d$ is strictly convex on $[\frac12,1]$ and then
\[
\Psi(\varepsilon)=\varphi_{d_{j_1}}(x_{j_1}(\varepsilon))+\varphi_{d_{j_2}}(x_{j_2}(\varepsilon))
\]
is strictly convex on the interval $[-\varepsilon_0,\varepsilon_0]$. Since $0$ lies in the interior of this interval, strict convexity implies
\[
\max\{\Psi(-\varepsilon_0),\Psi(\varepsilon_0)\}>\Psi(0).
\]
Thus, choosing $\varepsilon^\prime=\varepsilon_0$ or $\varepsilon=-\varepsilon_0$ produces a point $(\tilde{x}(\varepsilon^\prime),x(\varepsilon^\prime))\in M$ and a point $(\delta(\varepsilon^\prime),x(\varepsilon^\prime))\in M_1$ with a strictly larger value of $f$ (resp. $f_1$),  contradicting the maximality of $(\delta,x)$. Hence, it must be $a\ge r$.

Let us now see that for any $(\delta,x)\in M_1$ with $a\ge r$, there exists $\delta^\star$ such that $(\delta^\star,x)\in M_1$ and $\displaystyle{\sum_{j=1}^m\delta_j^\star\log x_j=0}$ and, consequently,
$$
f_1(\delta,x)=\frac{1}{2}\sum_{j=1}^m\log x_j+\sum_{j=1}^m\delta_j\log x_j\ls\frac{1}{2}\sum_{j=1}^m\log x_j=f_1(\delta^\star,x).
$$
Therefore, if $f$ is maximized on $M$ at $(\tilde{x},x)$, we will necessarily have that $\delta_j=0$ and  $\tilde{x}_j=\frac{1}{2}$ at every $j$ such that $x_j\neq1$. In order to see it, let $(\delta,x)\in M_1$ with $a\ge r$. For every $j\in A$ we have that $x_j=1$ and then $x_j-\frac{1}{2}=\frac{1}{2}$. Let $S\subseteq A$ with cardinality $r\ls a$ and take
\[
\delta_j^\star=
\begin{cases}
\frac12, & \textrm{ if }j\in S,\\
0, & \textrm{ if }j\notin S.
\end{cases}
\]
Then $\sum_{j=1}^m\delta_j^\star=\frac{r}{2}$, and for each $1\ls j\ls m$ we have
$0\le \delta_j^\star\le x_j-\frac12$, since this inequality is trivial if $j\not\in S$ and $x_j=1$ if $j\in S\subseteq A$. Hence $(\delta^\star,x)\in M_1$.

Moreover, $\delta_j^\star\neq 0$ only for $j\in S\subseteq A$, and for those indices $\log x_j=\log 1=0$; therefore
\[
\sum_{j=1}^m \delta_j^\star\log x_j=0.
\]

Consequently, if $f$ attains its maximum at $(\tilde{x},x)\in M$, necessarily $\displaystyle{\sum_{j=1}^m x_j=n}$, $r\ls a$,  and $\tilde{x}_j=\frac{1}{2}$ for every $j\in B$, and

\[
f(\tilde x,x)=\frac12\sum_{j\in B}\log x_j.
\]

Since, $\displaystyle{\sum_{j=1}^m x_j=n}$ and $x_j=1$ for every $j\in A$, we have
\[
\sum_{j\in B} x_j=n-a.
\]
Taking into account that $b=m-a$, by the concavity of the logarithm,
\[
\frac{1}{m-a}\sum_{j\in B}\log x_j\le
\log\!\Big(\frac{1}{m-a}\sum_{j\in B}x_j\Big)=\log\!\Big(\frac{n-a}{m-a}\Big),
\]
so
\[
f(\tilde x,x)\le \frac{m-a}{2}\log\left(\frac{n-a}{m-a}\right),
\]
with equality if and only if $x_j$ is constant on $B$, i.e. $x_j=\frac{n-a}{m-a}$ for all $j\in B$.

Let us now call $\Phi(a):= \frac{m-a}{2}\log\Big(\frac{n-a}{m-a}\Big)$ and let us see that $\Phi$ is decreasing in $a$. In order to do it, recall that $b=m-a$ and let us define $c:=\frac{n-a}{m-a}=\frac{n-m+b}{b}\in (0,1)$, since $n<m$. Notice that
$$
\Phi(a)=\frac{1}{2}G(m-a),
$$
where $G(b)=b\log c=b\log\left(\frac{n-m+b}{b}\right)$. A direct computation yields
$$
G^\prime(b)=\log(c)+\frac{1}{c}-1.
$$
Calling $h(c)=\log(c)+\frac{1}{c}-1$ for every $c\in (0,1]$ we have that
$$
h^\prime(c)=\frac{1}{c}-\frac{1}{c^2}=\frac{c-1}{c^2}<0
$$
for every $c\in (0,1)$. Thus, $h$ is strictly decreasing in $(0,1]$ and, since $h(1)=0$, we have that $h(c)>0$ for every $c\in (0,1)$. Consequently, $G$ is strictly increasing and then $\Phi$ is strictly decreasing.

Consequently, if $f$ attains its maximum at $(\tilde{x},x)\in M$, taking into account that $r\ls a$,
\[
f(\tilde x,x)\le \frac{m-a}{2}\log\left(\frac{n-a}{m-a}\right)=\Phi(a)\ls\Phi(r)=\frac{m-r}{2}\log\left(\frac{n-r}{m-r}\right),
\]
with equality if and only if $a=r$ and $x_j=\frac{n-r}{m-r}$ for every $j\in B$. Since $m-r=2(n-k)$ and $n-r=m+n-2k$, we obtain that if $n<m$
$$
\max_{(\tilde{x},x)\in M}f((\tilde{x},x)\ls (m-k)\log\left(\frac{m+n-2k}{2(m-k)}\right).
$$
Besides, letting $a=r$ and taking
\[
x_j=1\ (j=1,\dots,r),\qquad x_j=c:=\frac{n-r}{m-r}\ (j=r+1,\dots,m),
\]
\[
\tilde x_j=1\ (j=1,\dots,r),\qquad \tilde x_j=\frac12\ (j=r+1,\dots,m),
\]
we have that $\displaystyle{\sum_{j=1}^m\tilde x_j=r+\frac{m-r}{2}=k}$, $\displaystyle{\sum_{j=1}^m x_j=r+(m-r)c=n}$, and $c\in[\frac12,1)$ is equivalent to $n\ge k$ and $n<m$,
which holds whenever $M\neq\varnothing$ and $n<m$ and
$$
f(\tilde{x},x)= (m-k)\log\left(\frac{m+n-2k}{2(m-k)}\right).
$$
Thus, the maximum is attained at this point.
\end{proof}

As a corollary, using Ball's integral estimate \eqref{eq:BallsIntegralEstimate}, we have the following estimate whenever $\tilde{c}_j\geq \frac{1}{2}$ for every $j\in J$.
\begin{corollary}\label{cor:UpperBoundVolumeBallsIntegralEstimate}
Let $K\subseteq\R^n$ be a centrally symmetric convex body in John's position and let  $(c_j,v_j)_{j=1}^m\subseteq (0,1]\times(\partial K\cap S^{n-1})$  be its associated decomposition of the identity. Let $H\in G_{n,k}$ be a $k$-dimensional subspace and let $J=\{1\leq j\leq m\,:\,P_Hv_j\neq0\}$. Assume that for every $j\in J$, $\tilde{c}_j=c_j\Vert P_Hv_j\Vert_2^2\geq\frac{1}{2}$ and that $m_0>n$. Then,
$$
\vol_k(K\cap H)\leq2^\frac{m_0-k}{2}\vol_k(B_\infty^k)\prod_{j\in J}c_j^\frac{c_j\Vert P_Hv_j\Vert_2^2}{2}\ls \left(\frac{m_0+n-2k}{m_0-k}\right)^{\frac{m_0-k}{2}}\vol_k(B_\infty^k).
$$
\end{corollary}

\begin{proof}
If $\tilde{c}_j\geq\frac{1}{2}$ then $\frac{1}{1-\tilde{c}_j}\geq2$ for every $j\in \tilde{J}=\{j\in J\,:\,\tilde{c}_j\neq1\}$ and then, applying \eqref{eq:BallsIntegralEstimate} in the inequality given by Theorem \ref{thm:UpperBoundVolume}
we obtain
$$
\vol_k(K\cap H)\ls2^\frac{m_0-k}{2}\vol_k(B_\infty^k)\prod_{j\in J}c_j^\frac{\tilde{c}_j}{2}=2^\frac{m_0-k}{2}\vol_k(B_\infty^k)\prod_{j\in J}c_j^\frac{c_j\Vert P_Hv_j\Vert_2^2}{2}.
$$
Let us prove the second inequality. Let
$$
M=\left\{(\tilde{x}_j, x_j)_{j\in J}\,:\,\frac{1}{2}\leq\tilde{x}_j\leq x_j\leq 1,\,\forall j\in J,\,\sum_{j\in J}\tilde{x}_j=k,\,\sum_{j\in J}x_j\leq n\right\}
$$
and notice that $(\tilde{c}_j,c_j)_{j\in J}\in M$. Thus $M\neq\emptyset$. Therefore, identifying $J$ with $\{1,\dots,m_0\}$ we have, by Proposition \ref{prop:exact-max},
$$
\sum_{j=1}^{m_0}\tilde{c}_j\log c_j\ls (m_0-k)\log\left(\frac{m_0+n-2k}{2(m_0-k)}\right).
$$
 Therefore,
 $$
 \prod_{j\in J}c_j^\frac{c_j\Vert P_Hv_j\Vert_2^2}{2}\ls \frac{1}{2^\frac{m_0-k}{2}}\left(\frac{m_0+n-2k}{m_0-k}\right)^{\frac{m_0-k}{2}},
 $$
which gives the second inequality.\qedhere
\end{proof}

\begin{remark}\label{rem:comp-Ball}
It is noteworthy that the upper bound in Corollary \ref{cor:UpperBoundVolumeBallsIntegralEstimate} is better than the upper bound $2^{\frac{n-k}{2}}\vol_k(B_\infty^k)$ given by the right-hand side in \eqref{eq:Ball-2} if $m_0\ls n$ and worse if $m_0>n$. Indeed, calling $\alpha=\frac{n-k}{m_0-k}\gr1$ if and only if $m_0\ls n$, and using the strict convexity of the function $2^\alpha$ in $[0,\infty)$, we have that $\alpha+1\ls2^\alpha$ for every $\alpha\gr1$ and $\alpha+1>2^\alpha$ if $0<\alpha<1$.
\end{remark}

Let us now show that, if $\tilde{c}_j\geq \frac{1}{2}$ for every $j\in J$, then the tighter estimate provided by Corollary \ref{cor:UpperBoundVolumeBallsIntegralEstimate} is better than the estimate in Theorem \ref{thm:SectionsSymmetricJohn}. Thus, in this case, even after applying Ball's integral estimate \eqref{eq:BallsIntegralEstimate}, the estimate obtained by first applying Theorem \ref{thm:parseval} and then Brascamp-Lieb inequality is better than the one we obtain if we apply Brascamp-Lieb directly to the indicator functions of the intervals $[-\sqrt{\tilde{c}_j}t_j,\sqrt{\tilde{c}_j}t_j]$. This is the content of the following proposition:

\begin{proposition}\label{prop:ComparisonEstimates}
Let $K\subseteq\R^n$ be a centrally symmetric convex body in John's position and let  $(c_j,v_j)_{j=1}^m\subseteq (0,1]\times(\partial K\cap S^{n-1})$  be its associated decomposition of the identity. Let $H\in G_{n,k}$ be a $k$-dimensional subspace and let $J=\{1\leq j\leq m\,:\,P_Hv_j\neq0\}$. Assume that for every $j\in J$, $\tilde{c}_j=c_j\Vert P_Hv_j\Vert_2^2\geq\frac{1}{2}$. Then,
\begin{eqnarray*}
\vol_k(K\cap H)&\leq&2^\frac{m_0-k}{2}\vol_k(B_\infty^k)\prod_{j\in J}c_j^\frac{c_j\Vert P_Hv_j\Vert_2^2}{2}\leq \prod_{j\in J}\left(\frac{1}{\Vert P_H v_j\Vert_2}\right)^{c_j\Vert P_Hv_j\Vert_2^2}\vol_k(B_\infty^k)\cr
&\leq&\left(\frac{n}{k}\right)^\frac{k}{2}\vol_k(B_\infty^k).
\end{eqnarray*}
\end{proposition}
\begin{proof}
The first inequality is included in Corollary \ref{cor:UpperBoundVolumeBallsIntegralEstimate} and the last one in Theorem \ref{thm:SectionsSymmetricJohn}. Therefore, we want to prove the middle one. Let us first point out that, if $\tilde{c}_j\geq\frac{1}{2}$ for every $j\in J$, then necessarily
$$
k=\sum_{j\in J}\tilde{c}_j\geq\sum_{j\in J}\frac{1}{2}=\frac{m_0}{2},
$$
so $m_0\leq 2k$. Since for every $j\in J$, $\frac{1}{\Vert P_Hv_j\Vert_2}=\left(\frac{c_j}{\tilde{c}_j}\right)^\frac{1}{2}$, we want to prove that if $\tilde{c}_j\geq\frac{1}{2}$ for every $j\in J$, then
$$
2^\frac{m_0-k}{2}\prod_{j\in J}c_j^\frac{\tilde{c}_j}{2}\leq\prod_{j\in J}\left(\frac{c_j}{\tilde{c}_j}\right)^\frac{\tilde{c}_j}{2},
$$
which is equivalent to
$$
2^\frac{m_0-k}{2}\leq\prod_{j\in J}\left(\frac{1}{\tilde{c}_j}\right)^\frac{\tilde{c}_j}{2},
$$
which is also equivalent to
$$
\prod_{j\in J}\left(\tilde c_j\right)^{\tilde c_j}\leq \left(\frac{1}{2}\right)^{m_0-k}
$$
and, therefore, to
\begin{equation}\label{eq:ComparisonBounds}
	\sum_{j\in J}\tilde c_j\log\tilde c_j\leq (m_0-k)\log\frac{1}{2}.
\end{equation}
Let us consider the set
$$
M=\left\{(x_j)_{j\in J}\,:\,\frac{1}{2}\leq x_j\leq1\,,\,\sum_{j\in J}x_j=k\right\},
$$ which is convex and invariant under permutations of coordinates, and let $f:M\to\R$ be the function given by
$$
f(x)=\sum_{j\in J}x_j\log x_j.
$$
Notice that $f$ is convex function and that for every $(x_j)_{j\in J}\in M$ we have that
$$
(x_j)_{j\in J}\prec\left(1,\dots, 1,\frac{1}{2},\dots,\frac{1}{2}\right)=(y_j)_{j\in J},
$$
where there are $2k-m_0\geq0$ coordinates equal to 1 and $2m_0-2k$ coordinates equal to $\frac{1}{2}$. That is, identifying $J$ with $\R^{m_0}$.
\begin{itemize}
	\item $\displaystyle{\sum_{j=1}^{m_0}x_j=k=\sum_{j=1}^{m_0}y_j}$ and
	\item For every $1\leq k\leq m_0$, we have $\displaystyle{\sum_{j=1}^kx_j^*\ls\sum_{j=1}^ky_j^*}$, where $(z_j^\ast)_{j=1}^{m_0}$ denotes the non-increasing rearrangement of $(z_j)_{j=1}^{m_0}$.
\end{itemize}
Then, by Karamata's inequality we have that for every $x\in K$
$$
f(x)\leq f(y)=(m_0-k)\log\frac{1}{2},
$$
which implies \eqref{eq:ComparisonBounds}, since $(\tilde{c}_j)_{j\in J}\in M$.
\end{proof}

\begin{remark}
Let us point out that if $K$ is the convex body constructed in Theorem \ref{thm:counterexample} and $H=\textrm{span}\{e_1,\dots, e_k\}$, and $\tilde{c}_j=\frac{1}{2}$ for every $j\in J$, and $m_0=2k$. In that case all the inequalities are equalities.
\end{remark}

In order to provide an estimate in the case that there exists $j\in J$ such that $\tilde{c}_j<\frac{1}{2}$, we start proving the following lemma:

\begin{lemma}\label{lem:SectionsBoxes}
Let $(x_j)_{j=1}^m\subseteq\R^m$ be an ortonormal basis of $\R^m$, $(l_j)_{j=1}^m\subseteq(0,\infty)$, and let $E\in G_{m,k}$. Let $1\ls j_0\ls m$ such that $x_{j_0}\not\in E$ and define
$$
B=\sum_{j=1}^m[-l_j,l_j]x_j,\quad\textrm{and}\quad B^\prime=\sum_{{j=1}\atop {j\neq j_0}}^m[-l_j,l_j]x_j\subseteq x_{j_0}^\perp.
$$
Then, there exists $E^\prime\in G_{x_{j_0}^\perp,k}$, a $k$-dimensional linear subspace in $x_0^\perp$ such that
$$
\vol_k(B\cap E)\ls\frac{1}{\Vert P_{E^\perp}x_{j_0}\Vert_2}\vol_k(B^\prime\cap E^\prime)=\frac{1}{\sqrt{1-\Vert P_{E}x_{j_0}\Vert_2^2}}\vol_k(B^\prime\cap E^\prime).
$$
\end{lemma}

\begin{proof}
Let us assume, without loss of generality, that $j_0=m$ and let us call
$$
\xi:=\frac{P_{E^\perp}x_{m}}{\Vert P_{E^\perp} x_m\Vert_2}\in E^\perp,\quad \gamma:=\langle x_m,\xi\rangle=\Vert P_{E^\perp} x_m\Vert_2=\sqrt{1-\Vert P_{E}x_{m}\Vert_2^2}\in(0,1],
$$
and
$$
\tilde{B}:=\left\{\sum_{j=1}^{m-1} a_jx_j+\lambda x_m\,:\,|a_j|\ls l_j,\,\lambda\in\R\right\},
$$
the cylinder with axis $x_m$ and base $B^\prime$. Notice that $B\subseteq \tilde{B}$ and then
$$
\vol_k(B\cap E)\ls\vol_k(\tilde{B}\cap E).
$$

Let us define the linear map $F:x_m^\perp\to\xi^\perp$ the linear map given by
$$
F(y):=y-\frac{\langle y,\xi\rangle}{\langle x_m,\xi\rangle}x_m,\quad\forall y\in x_m^\perp.
$$

$F$ is well defined since $\langle x_m,\xi\rangle=\gamma\neq0$ and for every $y\in x_m^\perp$
$$
\langle F(y),\xi\rangle=\langle y,\xi\rangle-\frac{\langle y,\xi\rangle}{\langle x_m,\xi\rangle}\langle x_m,\xi\rangle=0,
$$
so $F(y)\in\xi^\perp$. Moreover, $F$ is an isomorphism as $\langle x_m,\xi\rangle=\gamma\neq0$.

Notice that for every $y\in B^\prime\subseteq x_m^\perp$, $F(y)\in \tilde{B}$ and by construction, $F(y)\in\xi^\perp$. Thus, $F(B^\prime)\subseteq\tilde{B}\cap\xi^\perp$. Conversely, every $z\in\tilde{B}\cap\xi^\perp$ can be uniquely written as $z=y+\lambda x_m$ for some $y\in B^\prime\subseteq x_m^\perp$ and some $\lambda\in\R$. Since $z\in\xi^\prime$, necessarily $\lambda=-\frac{\langle y,\xi\rangle}{\langle x_m,\xi\rangle}$ and then $z=F(y)$. Thus, $\tilde{B}\cap\xi^\perp\subseteq F(B^\prime)$ and we have
$$
F(B^\prime)=\tilde{B}\cap\xi^\perp.
$$
Let us now consider the ($m-2$)-dimensional subspace of $x_m^\perp$
$$
W:=x_m^\perp\cap\xi^\perp.
$$
Notice that $F(y)=y$ for every $y\in W$. Moreover, if we call
$$
\eta=\xi-\gamma x_m\in\xi_m^\perp\quad\textrm{and}\quad\zeta:=x_{m}-\gamma \xi\in \xi^\perp,
$$
we have that $\langle\eta, y\rangle =0$ and $\langle \zeta, y\rangle=0$ for every $y\in W$. Therefore, $\eta$ spans the orthogonal complement of $W$ in $x_m^\perp$ and $\zeta$ spans the orthogonal complement of $W$ in $\xi^\perp$. Besides $\Vert \eta\Vert_2=\Vert\zeta\Vert_2=\sqrt{1-\gamma^2}$ and $F(\eta)=-\frac{1}{\gamma}\zeta$. Hence, the singular values of $F$ are
$$
\sigma(F)=\left\{1,\dots,1,\frac{1}{\gamma}\right\}
$$
and for every $k$-dimensional subspace $E^\prime\in G_{x_m^\perp,k}$, we have that $|\textrm{det}F_{|E^\prime}|$ is bounded above by the product of the $k$ largest singular values, which is $\frac{1}{\gamma}$. Thus, for every measurable set $A\in E^\prime$,
$$
\vol_k(F(A))\ls\frac{1}{\gamma}\vol_k(A).
$$
Taking into account that $\xi\in E^\perp$, we have that $E\subseteq\xi^\perp$, and taking $E^\prime=F^{-1}(E)\in G_{x_m^\perp,k}$, we have
$$
\tilde{B}\cap E=\tilde{B}\cap (E\cap\xi^\prime)=(\tilde{B}\cap\xi^\prime)\cap E=F(B^\prime)\cap F(E^\prime)=F(B^\prime\cap E^\prime).
$$
Therefore,
\[
\vol_k(B\cap E)\ls\vol_k(\tilde{B}\cap E)=\vol_k(F(B^\prime\cap E^\prime))\ls\frac{1}{\gamma}\vol_k(B^\prime\cap E^\prime).\qedhere
\]
\end{proof}

As a consequence, we have the following

\begin{lemma}\label{lem:SectionsBoxesIterated}
Let $(x_j)_{j=1}^m\subseteq\R^m$ be an ortonormal basis of $\R^m$, $(l_j)_{j=1}^m\subseteq(0,\infty)$, and let $E\in G_{m,k}$. There exists $0\ls q\ls m-k$ such that there exists $J_q\subseteq\{1,\dots, m\}$ with cardinality $m-q$, $(\theta_j)_{j=1}^q\subseteq\left[0,\frac{1}{2}\right)$ and a $k$-dimensional subspace $E_q\subseteq\{x_j\,:\,j\not\in J_q\}^\perp$ which satisfies that $\Vert P_{E_q}x_j\Vert_2^2\geq\frac{1}{2}$ for every $j\in J_q$ and, calling
$$
B=\sum_{j=1}^m[-l_j,l_j]x_j,\quad\textrm{and}\quad B_q=\sum_{j\in J_q}^m[-l_j,l_j]x_j\subseteq \{x_j\,:\,j\not\in J_q\}^\perp,
$$
we have
$$
\vol_k(B\cap E)\ls\vol_{k}(B_q\cap E_q)\prod_{j=1}^q\frac{1}{\sqrt{1-\theta_j}}<2^{q\overline{\theta}}\vol_{k}(B_q\cap E_q),
$$
where
$$
\overline{\theta}=\frac{1}{q}\sum_{j=1}^q\theta_j\in\left[0,\frac{1}{2}\right).
$$
Moreover, if $m>2k$, then $q\gr m-2k$ and for every $1\ls j\ls m-2k$ we can assume that $\theta_j\ls \frac{k}{m-j-1}<\frac{1}{2}$ and then
$$
\vol_k(B\cap E)\ls\vol_{k}(B_q\cap E_q)\prod_{j=1}^q\frac{1}{\sqrt{1-\theta_j}}<2^\frac{q-(m-2k)}{2}\sqrt{\frac{{{m}\choose{k}}}{{{2k}\choose{k}}}}\vol_{k}(B_q\cap E_q).
$$
\end{lemma}

\begin{proof}
If $\Vert P_E x_j\Vert_2^2\geq\frac{1}{2}$ for every $1\ls j\ls m$, we set $q=0$, $J_1=\emptyset$, and $E_q=E$. Otherwise, let $j_1\in J$ be such that $\theta_1:=\Vert P_E x_{j_1}\Vert_2^2<\frac{1}{2}$ and let us call $J_1=\{1,\dots, m\}\setminus\{j_1\}$. By Lemma \ref{lem:SectionsBoxes}, there exists a $k$-dimensional subspace $E_1\subseteq x_{j_1}^\perp$ such that
$$
\vol_k(B\cap E)\ls\frac{1}{\sqrt{1-\theta_1}}\vol_k(B_1\cap E_1),
$$
with
$$
B_1=\sum_{j\in J_1}[-l_j,l_j]x_j\subseteq x_{j_1}^\perp.
$$
If $\Vert P_{E_1} x_j\Vert_2^2\geq\frac{1}{2}$ for every $j\in J_1$, we set $q=1$ with $J_1$ and $E_1$ as defined. Otherwise, there exists $j_2\in J_1$ such that $\theta_2:=\Vert P_{E_1} x_{j_2}\Vert_2^2<\frac{1}{2}$ and call $J_2=J_1\setminus\{j_2\}$. By Lemma \ref{lem:SectionsBoxes}, there exists a $k$-dimensional subspace $E_2\subseteq \{x_{j_1},x_{j_2}\}^\perp$ such that
$$
\vol_k(B_1\cap E_1)\ls\frac{1}{\sqrt{1-\theta_2}}\vol_k(B_2\cap E_2),
$$
with
$$
B_2=\sum_{j\in J_2}^m[-l_j,l_j]x_j\subseteq \{x_{j_1}, x_{j_2}\}^\perp.
$$
We iterate the process and notice that we stop after at most $m-k$ steps since, in that case $E_{m-k}$ is a $k$ dimensional subspace of $\{x_j\,:\,j\not\in J_{m-k}\}^\perp$, which is $k$-dimensional and, therefore, $E_{m-k}=\{x_j\,:\,j\not\in J_{m-k}\}^\perp$ and $\Vert P_{E_{m-k}}x_j\Vert_2^2=1\geq\frac{1}{2}$ for every $j\in J_{m-k}$.

Notice that the function $h(x):=-\frac12\ln(1-x)$ is strictly convex on $\left[0,\frac{1}{2}\right)$ with $h(0)=0$ and $h(1/2)=\frac12\ln 2$.
Thus $h(x)< (\ln 2)\,x$ for every $x\in\left[0,\frac{1}{2}\right)$. Therefore, $(1-x)^{-1/2}<2^x$ for every $x\in\left[0,\frac{1}{2}\right)$.

Finally, assume that $m>2k$ and notice that for every $j$ we have that $1\ls j\ls m-2k$  we have that $\sharp J_{j-1}=m-j+1>2k$ and
$$
\sum_{i\in J_{j-1}}\Vert P_{E_{j-1}} x_i\Vert_2^2=k.
$$
Therefore, if $\sharp J_{j-1}=m-j+1>2k$ we have that
$$
\min_{i\in J_{j-1}}\Vert P_{E_{j-1}} x_i\Vert_2^2\ls\frac{k}{m-j+1}<\frac{1}{2},
$$
and we can take $\theta_j=\Vert P_{E_{j-1}} x_{i_0}\Vert_2^2\ls\frac{k}{m-j+1}$, where $i_0$ is the index for which the minimum is attained.

Making this choice, we have that for every $1\ls j\ls m-2k$
$$
\frac{1}{\sqrt{1-\theta_j}}\ls\frac{1}{\sqrt{1-\frac{k}{m-j+1}}}
$$
and then
$$
\prod_{j=1}^{m-2k}\frac{1}{\sqrt{1-\theta_j}}\ls\prod_{j=1}^{m-2k}\frac{1}{\sqrt{1-\frac{k}{m-j+1}}}=\prod_{l=2k+1}^{m}\frac{1}{\sqrt{1-\frac{k}{l}}}=\sqrt{\prod_{l=2k+1}^{m}\frac{l}{l-k}}.
$$
Since
$$
\prod_{l=2k+1}^{m}\frac{l}{l-k}=\frac{m!k!}{(2k)!(m-k)!}=\frac{{{m}\choose{k}}}{{{2k}\choose{k}}},
$$
we have that
$$
\prod_{j=1}^{m-2k}\frac{1}{\sqrt{1-\theta_j}}\ls\sqrt{\frac{{{m}\choose{k}}}{{{2k}\choose{k}}}}.
$$
Using the upper bound
$$
\frac{1}{\sqrt{1-\theta_j}}\ls 2^{\theta_j}\ls\sqrt{2}
$$
for every $m-2k+1\ls j\ls q$ we finish the proof.
\end{proof}

As a consequence, we have the following

\begin{theorem}
Let $K\subseteq\R^n$ be a centrally symmetric convex body in John's position and let  $(c_j,v_j)_{j=1}^m\subseteq (0,1]\times(\partial K\cap S^{n-1})$  be its associated decomposition of the identity. Let $H\in G_{n,k}$ be a $k$-dimensional subspace and let $J=\{1\leq j\leq m\,:\,P_Hv_j\neq0\}$, and $m_0$ its cardinality. Then, there exists $0\ls q\ls m-k$ such that there exists $J_q\subseteq J$ with cardinality $m_0-q$, $(\theta_j)_{j=1}^q\subseteq\left[0,\frac{1}{2}\right)$ and $(d_j)_{j\in J_1}\in\left[\frac{1}{2},1\right)$ such that $\displaystyle{\sum_{j\in J_q}d_j\ls q}$, satisfying
$$
\vol_k(K\cap H)\ls2^\frac{m_0-q-k}{2}\vol_k(B_\infty^k)\left(\prod_{j=1}^q\frac{1}{\sqrt{1-\theta_j}}\right)\left(\prod_{j\in J_q}c_j^\frac{d_j}{2}\right)<2^\frac{m_0-k}{2}2^{-\frac{q}{2}(1-2\overline{\theta})}\vol_k(B_\infty^k)\prod_{j\in J_q}c_j^\frac{d_j}{2}.
$$
where
$$
\overline{\theta}=\frac{1}{q}\sum_{j=1}^q\theta_j\in\left[0,\frac{1}{2}\right).
$$
Moreover, if $m_0>2k$, then $q\gr m-2k$ and
$$
\vol_k(K\cap H)\ls2^\frac{k}{2}\vol_k(B_\infty^k)\sqrt{\frac{{{m_0}\choose{k}}}{{{2k}\choose{k}}}}\left(\prod_{j\in J_q}c_j^\frac{d_j}{2}\right).
$$
\end{theorem}

\begin{proof}
Recall that, using the notation explained in Section \ref{sec:GeneralSetting}, there exists an ortonormal basis $(x_j)_{j=1}^{m_0}\subseteq\R^{m_0}$ such that
$$
P_H x_j=\sqrt{\tilde{c_j}}u_j
$$
for every $j\in J$. Thus,
\begin{eqnarray*}
L\cap H&=&\{x\in H\,:\,|\langle x,u_j\rangle|\ls t_j, \forall j\in J\}=\{x\in H\,:\,|\langle x, x_j\rangle|\ls\sqrt{\tilde{c_j}}t_j, \forall j\in J\}\cr
&=&\{x\in H\,:\,|\langle x, x_j\rangle|\ls\sqrt{c_j}, \forall j\in J\}= B\cap H,
\end{eqnarray*}
where $B=\prod_{j=1}^{m_0}[-\sqrt{c_j},\sqrt{c_j}]x_j$. By Lemma \ref{lem:SectionsBoxesIterated}, there exists $0\ls q\ls m_0-k$ such that there exists $J_q\subseteq J$ with cardinality $m_0-q$, $(\theta_j)_{j=1}^q\subseteq\left[0,\frac{1}{2}\right)$ and a $k$-dimensional subspace $E_q\subseteq\{x_j\,:\,j\in J_q\}^\perp$ which satisfies that $\Vert P_{E_q}x_j\Vert_2^2\geq\frac{1}{2}$ for every $j\in J_q$ and
$$
\vol_k(B\cap E)\ls\vol_{k}(B_q\cap E_q)\prod_{j=1}^q\frac{1}{\sqrt{1-\theta_j}},
$$
where
$$
B_q\cap E_q=\{x\in E_q\,:\,|\langle x,x_j\rangle|\ls \sqrt{c_j}, \forall j\in J_q\}.
$$
Therefore, proceeding like in the proof of Theorem \ref{thm:UpperBoundVolume}, and calling $d_j=\Vert P_{E_q} x_j\Vert_2^2\in\left[\frac{1}{2},1\right]$ and $z_j=\frac{P_{E_q}x_j}{\sqrt{d_j}}$ for every $j\in J_q$,
\begin{eqnarray*}
\vol_{k}(B_q\cap E_q)&=&\int_{E_q}\prod_{j\in J_q}\mathds{1}_{[-\sqrt{c_j},\sqrt{c_j}]}(\langle x, x_j\rangle)dx=\int_{E_q}\prod_{j\in J_q}\mathds{1}_{[-\sqrt{c_j},\sqrt{c_j}]}(\sqrt{d_j}\langle x, z_j\rangle)dx\cr
&\ls& \vol_k(B_\infty^k)\prod_{j\in J_q}\left(\frac{I_{\frac{1}{1-d_j}}}{\pi\sqrt{1-d_j}}\right)^{1-d_j}c_j^\frac{d_j}{2}\ls 2^\frac{m_0-q-k}{2}\vol_k(B_\infty^k)\prod_{j\in J_q}c_j^\frac{d_j}{2},
\end{eqnarray*}
Consequently,
\begin{eqnarray*}
\vol_k(K\cap H)&\ls&\vol_k(L\cap H)\ls2^\frac{m_0-q-k}{2}\vol_k(B_\infty^k)\left(\prod_{j=1}^q\frac{1}{\sqrt{1-\theta_j}}\right)\left(\prod_{j\in J_q}c_j^\frac{d_j}{2}\right)\cr
&<&2^\frac{m_0-k}{2}2^{-\frac{q}{2}(1-2\overline{\theta})}\vol_k(B_\infty^k)\prod_{j\in J_q}c_j^\frac{d_j}{2}.\hspace{6cm}\qedhere
\end{eqnarray*}
\end{proof}

\subsection{Sections of non-necessarily symmetric convex bodies}\label{subsec:Non-symmetric}

In this section we will provide upper estimates for sections of non-necessarily symmetric convex bodies in John's position. Given $K\subseteq\R^n$ a convex body in John's positions, by \eqref{eq:John-dec}, there exists an associated decomposition of the identity $(c_j,v_j)_{j=1}^m\subseteq (0,1]\times (\partial K\cap S^{n-1})$ such that $\displaystyle{\sum_{j=1}^m c_jv_j=0}$. We will follow the notation introduced in Section \ref{sec:GeneralSetting} and, since $K\subseteq C$, provide upper estimates for $C\cap H$, with $H\in G_{n,k}$.

The main result in this section is the following estimate of the volume of sections of a convex body in John's position by a $k$-dimensional linear subspace:

\begin{theorem}\label{thm:nonsym-sec-john}
		Let $K$ be a convex body in $\mathbb{R}^n$ in John's position and $H\in G_{n,k}$. Assume that $\tilde{\delta}_j\gr \frac{1}{2}$ for every $1\ls j\ls m$, then
		\[
		\vol_{k}(K\cap H) \ls \vol_k(S_k)2^\frac{k+1-m}{2}\frac{n^\frac{k}{2}(n+1)^\frac{k+1}{2}}{k^\frac{k}{2}(k+1)^\frac{k+1}{2}}\left(\frac{2k-n+1}{m-2(n-k)}\right)^{-\frac{2k-n+1}{2}}.
		\]
	\end{theorem}

First of all, we will prove the following technical lemma:
\begin{lemma}\label{lem:FourierTransformExponentialOneSide}
Let $\alpha>0$ and $f_{\alpha}:\R\to\R$ the function given by $f_{\alpha}(x)=e^{-\alpha x}\mathds{1}_{[0,\infty)(x)}$.Then $\widehat{f_{\alpha}}:\R\to\R$ is given by
		$$
		\widehat{f_{\alpha}}(s)=\frac{1}{\alpha-is},\quad\forall s\in\R.
		$$
	\end{lemma}
	
	\begin{proof}
		For every $s\in\R$ we have that
		\begin{eqnarray*}
			\widehat{f_{\alpha}}(s)&=&\int_{\R}f_{\alpha}(x)e^{ixs}dx=\int_0^\infty e^{-\alpha x}\left(\cos(xs)+i\sin(xs)\right)dx\cr
&=&\int_0^\infty e^{-\alpha x}\cos(xs)dx+i\int_0^\infty e^{-\alpha x}\sin(xs)dx.\cr
		\end{eqnarray*}
		Integrating by parts twice, we have
		\begin{eqnarray*}
			\int_0^\infty e^{-\alpha x}\cos(xs)dx&=&\frac{1}{\alpha}-\frac{s}{\alpha}\int_0^\infty e^{-\alpha x}\sin(xs)dx\cr
			&=&\frac{1}{\alpha}-\frac{s^2}{\alpha^2}\int_0^\infty e^{-\alpha x}\cos(xs)dx.
		\end{eqnarray*}
		Therefore,
		$$
		\left(1+\frac{s^2}{\alpha^2}\right)\int_0^\infty e^{-\alpha x}\cos(xs)dx=\frac{1}{\alpha}\Leftrightarrow\int_0^\infty e^{-\alpha x}\cos(xs)dx=\frac{1}{\alpha}\frac{1}{1+\frac{s^2}{\alpha^2}}=\frac{\alpha}{\alpha^2+s^2}.
		$$
In the same way, integrating by parts,
$$
\int_0^\infty e^{-\alpha x}\sin(xs)dx=\frac{s}{\alpha}\int_0^\infty e^{-\alpha x}\cos(xs)dx=\frac{s}{\alpha^2+s^2},
$$
		and then
		\[
		\widehat{f_{\alpha}}(s)=\frac{\alpha+is}{\alpha^2+s^2}=\frac{1}{\alpha-is}.\qedhere
		\]
\end{proof}
Let us first provide the following intermediate estimate for sections of any dimension.

	\begin{lemma}\label{lem:nonsym.sec.fourier}
		Let $K\subseteq\R^n$ be convex body in John's position and let $(c_j,v_j)_{j=1}^m\subseteq (0,1]\times(\partial K\cap S^{n-1})$ be its associated decomposition of the identity. Let $H\in G_{n,k}$ and let $(\delta_j)_{j=1}^m$ and $(\tilde{\delta}_j)_{j=1}^m$ be defined as in Section \ref{sec:GeneralSetting}. Assume that $\tilde{\delta}_j\gr \frac{1}{2}$ for every $1\ls j\ls m$. Then,
		\[
		\vol_k(K\cap H) \ls \vol_k(S_k)2^\frac{k+1-m}{2}\frac{n^\frac{k}{2}(n+1)^\frac{k+1}{2}}{k^\frac{k}{2}(k+1)^\frac{k+1}{2}}\prod_{j=1}^m\delta_j^{-\frac{\tilde{\delta}_j}{2}}.
		\]
	\end{lemma}
	\begin{proof}
		Our starting point is the formula, established in \cite[Section 4]{AB}, where we follow the general setting established in Section \ref{sec:GeneralSetting}:
		\[
		\frac{k^\frac{k}{2}(k+1)^\frac{k+1}{2}}{n^\frac{k}{2}(n+1)^\frac{k+1}{2}}\frac{\vol_k(C\cap H)}{\vol_k(S_k)} = \int_{L\cap F} e^{-\sum_{j=1}^m\sqrt{\delta_j\tilde{\delta}_j}\langle y,u_j\rangle}dy,
		\]
Since, $(\tilde{\delta}_j, u_j)_{j=1}^m$ provide a decomposition of the identity in $F$ and
\begin{eqnarray*}
\int_{L\cap F} e^{-\sum_{j=1}^m\sqrt{\delta_j\tilde{\delta}_j}\langle y,u_j\rangle}dy&=&\int_F \prod_{j=1}^me^{-\sqrt{\delta_j\tilde{\delta}_j}\langle y,u_j\rangle}\mathds{1}_{[0,\infty)}(\langle y,\tilde{v}_j\rangle)dy\cr
&=&\int_F \prod_{j=1}^me^{-\sqrt{\delta_j\tilde{\delta}_j}\langle y,u_j\rangle}\mathds{1}_{[0,\infty)}(\langle y,P_F(\tilde{v}_j\rangle))dy\cr
&=&\int_F \prod_{j=1}^me^{-\sqrt{\delta_j\tilde{\delta}_j}\langle y,u_j\rangle}\mathds{1}_{[0,\infty)}\left(\sqrt{\tilde{\delta}_j}\langle y,u_j\rangle\right)dy,\cr
\end{eqnarray*}
we have, by Corollary \ref{cor:UpperBoundParsevalAndBrascampLieb}, that
\begin{eqnarray*}
\int_{L\cap F} e^{-\sum_{j=1}^m\sqrt{\delta_j\tilde{\delta}_j}\langle y,u_j\rangle}dy&\leq&\frac{1}{(2\pi)^{m-(k+1)}}\prod_{j=1}^m\left(\int_\R\left(\frac{1}{\delta_j+(1-\tilde{\delta}_j)t^2}\right)^\frac{1}{2(1-\tilde{\delta}_j)}dt\right)^{1-\tilde{\delta}_j}\cr
&=&\prod_{j=1}^m\left(\int_\R\left(\frac{1}{\delta_j+4\pi^2(1-\tilde{\delta}_j)t^2}\right)^\frac{1}{2(1-\tilde{\delta}_j)}dt\right)^{1-\tilde{\delta}_j}\cr
&=&\prod_{j=1}^m\delta_j^{-\frac{1}{2}}\left(\int_\R\left(\frac{1}{1+4\pi^2(1-\tilde{\delta}_j)\delta_j^{-1}t^2}\right)^\frac{1}{2(1-\tilde{\delta}_j)}dt\right)^{1-\tilde{\delta}_j}.\cr
\end{eqnarray*}
Taking into account that for every $\alpha>0$ and $0<\beta\ls 1/2$, by Bernoulli's inequality,
		\[
		1+\frac{\alpha}{2}\ls(1+\alpha\beta)^\frac{1}{2\beta}\Leftrightarrow\left(\frac{1}{1+\alpha\beta}\right)^\frac{1}{2\beta}\ls \frac{1}{1+\frac{\alpha}{2}},
		\]
		 we have that if $1-\tilde{\delta}_j\ls \frac{1}{2}$ for every $1\ls j\ls m$, then
\begin{eqnarray*}
\prod_{j=1}^m\delta_j^{-\frac{1}{2}}\left(\int_\R\left(\frac{1}{1+4\pi^2(1-\tilde{\delta}_j)\delta_j^{-1}t^2}\right)^\frac{1}{2(1-\tilde{\delta}_j)}dt\right)^{1-\tilde{\delta}_j}&\leq&\prod_{j=1}^m\delta_j^{-\frac{1}{2}}\left(\int_{\R}\frac{1}{1+2\pi^2\delta_j^{-1}t^2}dt\right)^{1-\tilde{\delta}_j}\cr
&=&\prod_{j=1}^m\delta_j^{-\frac{1}{2}}\left(\frac{\delta_j}{2}\right)^{\frac{1-\tilde{\delta}_j}{2}}\cr
&=&2^{\frac{k+1-m}{2}}\prod_{j=1}^m\delta_j^{-\frac{\tilde{\delta}_j}{2}},
\end{eqnarray*}
which finishes the proof.
	\end{proof}

Let us now prove  Theorem \ref{thm:nonsym-sec-john}.
	
	\begin{proof}[Proof of Theorem \ref{thm:nonsym-sec-john}]

Taking into account the previous lemma, our goal is to establish the bound $\prod_{j=1}^m\delta_j^{-\frac{\tilde{\delta}_j}{2}}\ls\left(\frac{2k-n+1}{m-2(n-k)}\right)^{-\frac{2k-n+1}{2}}$.

Let us find the minimum of the function
$$
f(\delta,\tilde{\delta})=\sum_{j=1}^m\tilde{\delta}_j\log\delta_j
$$
on the compact set
$$
M=\left\{(\delta_j,\tilde{\delta}_j)_{j=1}^m\,:\,\frac{1}{2}\ls\tilde{\delta_j}\ls\delta_j\ls 1\,:\,\sum_{j=1}^m\tilde{\delta_j}=k+1,\,\sum_{j=1}^m\delta_j=n+1\right\}.
$$
First of all, notice that for every $(\delta_j,\tilde{\delta}_j)_{j=1}^m$ we have
$$
n-k=\sum_{j=1}^m(\delta_j-\tilde{\delta}_j)\ls\frac{m}{2}
$$
and then $m\gr 2(n-k)$. Notice also that if $(\delta_j,\tilde{\delta}_j)_{j=1}^m\in M$ is a minimizer, 
necessarily there exists at most one coordinate of $\delta$ which is different from the corresponding coordinate of $\tilde{\delta}$ or from one $1$. Thus, we can assume without loss of generality that there exists $0\ls m_1\ls m$ such that
\begin{itemize}
\item $\delta_j=1$ for every $1\ls j\ls m_1$, and
\item $\delta_j=\tilde{\delta}_j$ for every $m_1+1\ls j\ls m$.
\end{itemize}
Assume that this is not the case, then, there exist $1\ls j_1<j_2\ls m$ such that $\tilde{\delta}_{j_1}<\delta_{j_1}<1$, and $\tilde{\delta}_{j_2}<\delta_{j_2}<1$. Let us call $\varepsilon_0=\min\{\delta_{j_1}-\tilde{\delta}_{j_1},\delta_{j_2}-\tilde{\delta}_{j_2}, 1-\delta_{j_1}, 1-\delta_{j_2}\}>0$, and consider the function $h:[-\varepsilon_0,\varepsilon_0]\to\R$ given by
$$
h(\varepsilon)=\tilde{\delta}_{j_1}\log(\delta_{j_1}-\varepsilon)+\tilde{\delta}_{j_2}\log(\delta_{j_2}+\varepsilon).
$$
Since
$$
h^{\prime\prime}(\varepsilon)=-\frac{\tilde{\delta}_{j_1}}{(\delta_{j_1}-\varepsilon)^2}-\frac{\tilde{\delta}_{j_2}}{(\delta_{j_2}+\varepsilon)^2}<0
$$
we have that $h$ is strictly concave on $[-\varepsilon_0,\varepsilon_0]$ and then $h(0)>\min\{h(-\varepsilon_0),h(\varepsilon_0)\}$. Therefore, if this minimum is attained at $\epsilon\varepsilon_0$ with $\epsilon=\pm1$,  calling
$$
\delta_j^*=\begin{cases}\delta_j &\textrm{ if } j\neq j_1,j_2\cr
\delta_{j_1}-\epsilon\varepsilon_0&\textrm{ if } j= j_1\cr
\delta_{j_2}+\epsilon\varepsilon_0&\textrm{ if } j= j_2,\cr
\end{cases}
$$
we have that $(\delta^*_j,\tilde{\delta}_j)_{j=1}^m\in M$ and $f(\delta,\tilde{\delta})>f(\delta^*,\tilde{\delta})$, contradicting the minimality of $f$ at $(\delta_j,\tilde{\delta}_j)_{j=1}^m$. Therefore, if $(\delta_j,\tilde{\delta}_j)_{j=1}^m\in M$ is a minimizer of $f$ in $M$,  
there exists $0\ls m_1\ls m$ such that $\delta_j=1$ for every $1\ls j\ls m_1$, $\delta_j<1$ for every $m_1+1\ls j\ls m$, and $\tilde{\delta}_j=\delta_j$ for every $m_1+1\ls j\ls m$.

Moreover, since $\displaystyle{\sum_{j=1}^m\tilde{\delta}_j}=k+1$, and $\displaystyle{\sum_{j=1}^m\delta_j}=n+1$, we have that
$$
n-k=\sum_{j=1}^m\delta_j-\sum_{j=1}^m\tilde{\delta}_j=\sum_{j=1}^{m_1}(1-\tilde{\delta}_j)+(\delta_{m_1}-\tilde{\delta}_{m_1})<\frac{m_1+1}{2}.
$$
Therefore, $m_1+1>2(n-k)$ and then $m_1>2(n-k)-1$ and, since $m_1\in\mathbb{N}$ we have that $m_1\gr 2(n-k)$, and, if $(\delta_j,\tilde{\delta}_j)_{j=1}^m\in M$ is a minimizer of $f$ in $M$, then necessarily there exists $2(n-k)\ls m_1\ls m$ such that $\delta_j=1$ for every $1\ls j\ls m_1$, $\delta_j<1$ for every $m_1+1\ls j\ls m$, and $\tilde{\delta}_j=\delta_j$ for every $m_1+1\ls j\ls m$. We will also assume that $m_1<m$ since, if $m_1=m$, then $\delta_j=1$ for every $1\ls j\ls m$, $m=n+1$ and $f(\delta,\tilde{\delta})=0$.

Now, for such minimizer
\begin{eqnarray}\label{eq:Minimality}
f(\delta,\tilde{\delta})&=&\sum_{j=m_1+1}^m\tilde{\delta}_j\log\delta_j=\sum_{j=m_1+1}^m\delta_j\log\delta_j+(\delta_{m_1+1}-\tilde{\delta}_{m_1+1})\log\frac{1}{\delta_{m_1+1}}\gr\sum_{j=m_1+1}^m\delta_j\log\delta_j\cr
&=&f(\delta,\tilde{\delta}^*),
\end{eqnarray}
where
$$
\delta_j^{**}=\begin{cases}\frac{1}{2} &\textrm{ if } 1\ls j\ls 2(n-k)\cr
\delta_{j}&\textrm{ if } 2(n-k)+1\ls j\ls n.\cr
\end{cases}
$$
Since $m_1\geq 2(n-k)$, we have $\displaystyle{\sum_{j=2(n-k)+1}^m\delta_j=n+1-2(n-k)=2k+1-n}$. Therefore, we have  $\displaystyle{\sum_{j=1}^m\tilde{\delta}_j^{**}=n-k+(2k+1-n)=k+1}$ and $(\delta_j,\tilde{\delta}_j^{**})_{j=1}^m\in M$. Consequently, inequality \eqref{eq:Minimality} is an equality and we have that there exists $(\delta_j,\tilde{\delta}_j)_{j=1}^m$, a minimizer of $f$ in $M$ such that there exists $2(n-k)\ls m_1\ls m$ such that $\delta_j=1$ for every $1\ls j\ls m_1$, $\delta_j<1$ for every $m_1+1\ls j\ls m$, $\tilde{\delta}_j=\frac{1}{2}$ for every $1\ls j\ls 2(n-k)$ and $\tilde{\delta}_j=\delta_j$ for every $2(n-k)+1\ls j\ls m$.

Therefore, for such minimizer,
$$
f(\delta,\tilde{\delta})=\sum_{j=m_1+1}^m\delta_j\log\delta_j
$$
and $\sum_{j=m_1+1}^m\delta_{j}=n+1-m_1$. Notice that the function $h_1:\left[\frac{1}{2},1\right]$ given by $h(t)=t\log t$ is strictly convex on $\left[\frac{1}{2},1\right]$. Therefore,
\begin{eqnarray*}
f(\delta,\tilde{\delta})&=&\sum_{j=m_1+1}^m\delta_j\log\delta_j=\sum_{j=m_1+1}^m h(\delta_j)\gr (m-m_1)h\left(\frac{1}{m-m_1}\sum_{j=m_1+1}^m\delta_j\right)\cr
&=&(m-m_1)h\left(\frac{n+1-m_1}{m-m_1}\right)=(n+1-m_1)\log\left(\frac{n+1-m_1}{m-m_1}\right)=f(\delta^{m_1},\tilde{\delta}^{m_1}),
\end{eqnarray*}
where,
$$
\tilde{\delta}_j^{m_1}=\begin{cases}\frac{1}{2} &\textrm{ if } 1\ls j\ls 2(n-k)\cr
1&\textrm{ if } 2(n-k)+1\ls j\ls m_1.\cr
\frac{n+1-m_1}{m-m_1}&\textrm{ if } m_1+1\ls j\ls n.\cr
\end{cases}
$$
and
$$
\delta_j^{m_1}=\begin{cases} 1 &\textrm{ if } 1\ls j\ls m_1\cr
\frac{n+1-m_1}{m-m_1}&\textrm{ if } m_1+1\ls j\ls n.\cr
\end{cases}
$$
Since $(\delta^{m_1}_j,\tilde{\delta}^{m_1}_j)_{j=1}^m\in M$, we have that the minimum value of $f$ on $M$ is
$$
\min_{(\delta,\tilde{\delta})\in M}f(\delta,\tilde{\delta})=(n+1-m_1)\log\left(\frac{n+1-m_1}{m-m_1}\right)
$$
for some $2(n-k)\ls m_1\ls m$ and, for each $m_1$ this value is attained at a point of the form $(\delta^{m_1}_j,\tilde{\delta}^{m_1}_j)_{j=1}^m$. Since this function is increasing on $m_1$,
$$
\min_{(\delta,\tilde{\delta})\in M}f(\delta,\tilde{\delta})=(2k-n+1)\log\left(\frac{2k-n+1}{m-2(n-k)}\right)
$$
and it is attained at the point with exactly $2(n-k)$ coordinates of $(\delta_j,\tilde{\delta}_j)=\left(1,\frac{1}{2}\right)$ and $m-2(n-k)$ coordinates $(\delta_j,\tilde{\delta}_j)=\left(\frac{k+1}{m-2(n-k)},\frac{2k-n+1-m_1}{m-2(n-k)}\right)$.
\end{proof}

\begin{remark}
We close with a remark regarding sections of an arbitrary, not necessarily symmetric, convex body $K$ in $\mathbb{R}^n$ with affine subspaces of any dimension. The following argument was originally sketched by Webb's PhD thesis \cite{We96} for the case that $K=S_n$ is the regular $n$-dimensional simplex. Recall that the volume ratio of a convex body $K$ in $\mathbb{R}^m$ is defined by
\[
\mathrm{vr}(K)=\left(\frac{\vol_m(K)}{\vol_m({\mathcal E}_K)}\right)^\frac{1}{m},
\]
where ${\mathcal E}_K$ denotes the maximum volume ellipsoid contained in $K$. It is known by a result of Ball \cite[Thm 1']{Ba1991} that the regular simplex $S_m$ is a maximiser for the volume ratio in $\mathbb{R}^m$. It follows that if $K$ is a convex body in $\mathbb{R}^n$ and $H$ is any $k$-dimensional affine subspace of $\mathbb{R}^n$ then
\begin{equation}\label{eq:Ball-vr}
\vol_k(K\cap H)\ls\ \vol_k(S_k)\cdot \frac{\vol_k({\mathcal E}_{K\cap H})}{\vol_k (B_2^k)}.
\end{equation}
What is more, Ball has also shown \cite[Proposition 4]{Ba3} that among all  $k$-dimensional ellipsoids in a convex body $K$ in $\mathbb{R}^n$ that is in John's position, the one with the largest volume is the Eulidean ball of radius $\sqrt{\frac{k(k+1)}{n(n+1)}}$. Combined with \eqref{eq:Ball-vr}, this shows that $\vol_k(K\cap H)\ls \frac{(k(k+1))^\frac{k}{2}}{(n(n+1))^\frac{k}{2}}\vol_{k}(S_k)$.
\end{remark}

\section{The Wills functional of sections of centrally symmetric convex bodies in John's position}\label{sec:wills}

In this section we will use the method developed in Section \ref{sec:Parseval} in order to give an upper bound for the Wills functional of sections of centrally symmetric convex bodies in John's position. We will also show that from such upper bound, one can recover the upper bound for the volume of sections provided by Theorem \ref{thm:UpperBoundVolume} and the upper bound for the mean width of sections given in Theorem \ref{thm:SectionsSymmetricJohn}.

We first carry out some preliminary work towards the results of this section, starting with the following lemma.
	\begin{lemma}\label{lem:FourierTransformDistanceSquared}
		Let $\alpha>0$ and $f:\R\to\R$ be the function
		$$
		f_\alpha(x)=e^{-\pi d^2(x,[-\alpha,\alpha])}.
		$$
		Then $\widehat{f_\alpha}:\R\to\R$ is given by
		$$
		\widehat{f_\alpha}(s)=\begin{cases}\frac{2\sin(\alpha s)}{s}+\cos(\alpha s)e^{-\frac{s^2}{4\pi}}-2\sin(\alpha s)\int_0^\infty e^{-\pi y^2}\sin(ys)dy&\textrm{ if }s\neq0\\
2\alpha +1&\textrm{ if }s=0.
\end{cases}
		$$
	\end{lemma}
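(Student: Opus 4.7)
The plan is a direct Fourier calculation, split into pieces reflecting the piecewise description of $d^2(\cdot,[-\alpha,\alpha])$. Since $f$ equals $1$ on $[-\alpha,\alpha]$, $e^{-\pi(x-\alpha)^2}$ on $(\alpha,\infty)$, and $e^{-\pi(x+\alpha)^2}$ on $(-\infty,-\alpha)$, I would write
\begin{equation*}
\widehat{f}(z)=\int_{-\alpha}^{\alpha}e^{ixz}\,dx+\int_{\alpha}^{\infty}e^{-\pi(x-\alpha)^2}e^{ixz}\,dx+\int_{-\infty}^{-\alpha}e^{-\pi(x+\alpha)^2}e^{ixz}\,dx,
\end{equation*}
and handle each summand separately. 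For $z\neq 0$, the middle integral is the elementary $\frac{2\sin(\alpha z)}{z}$, which accounts for the first term of the claimed expression.

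For the two tail integrals, I would apply the translations $u=x-\alpha$ and $u=x+\alpha$, respectively, pulling out factors of $e^{\pm i\alpha z}$. After additionally reflecting $u\mapsto -u$ in the second of them, both integrals run over $[0,\infty)$, and their sum becomes
\begin{equation*}
\int_0^{\infty}e^{-\pi u^2}\bigl(e^{i(u+\alpha)z}+e^{-i(u+\alpha)z}\bigr)\,du=2\int_0^{\infty}e^{-\pi u^2}\cos((u+\alpha)z)\,du.
\end{equation*}
Expanding the cosine via the addition formula isolates the two desired prefactors and yields
\begin{equation*}
2\cos(\alpha z)\int_0^{\infty}e^{-\pi u^2}\cos(uz)\,du-2\sin(\alpha z)\int_0^{\infty}e^{-\pi u^2}\sin(uz)\,du.
\end{equation*}

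To finish, I would invoke the standard Gaussian identity $\int_{\mathbb{R}}e^{-\pi u^2}e^{iuz}\,du=e^{-z^2/(4\pi)}$ which, by parity, gives $\int_0^{\infty}e^{-\pi u^2}\cos(uz)\,du=\tfrac{1}{2}e^{-z^2/(4\pi)}$; the sine integral has no elementary closed form and is retained in the final expression. Recombining the three pieces produces the claimed formula for $z\neq 0$. The value at $z=0$ is obtained directly as $\widehat{f}(0)=\int_{\mathbb{R}}f(x)\,dx=2\alpha+2\int_0^{\infty}e^{-\pi u^2}\,du=2\alpha+1$.

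The computation is essentially bookkeeping and presents no real obstacle; the only minor decision is to leave $\int_0^{\infty}e^{-\pi y^2}\sin(yz)\,dy$ unevaluated (rather than rewriting it as an imaginary error function), which is appropriate since later applications in the paper will presumably estimate this quantity rather than compute it explicitly.
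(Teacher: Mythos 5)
Your proposal is correct and follows essentially the same route as the paper's proof: decompose $f$ according to the piecewise form of $d^2(\cdot,[-\alpha,\alpha])$, translate the Gaussian tails, expand $\cos((u+\alpha)z)$ by the addition formula, and evaluate only the cosine Gaussian integral via $\int_0^{\infty}e^{-\pi u^2}\cos(uz)\,du=\tfrac12 e^{-z^2/(4\pi)}$. The only cosmetic difference is that the paper first uses evenness of $f$ to reduce to $2\int_0^\infty f(x)\cos(xz)\,dx$ before splitting, whereas you treat the two tails separately and recombine them, which is equivalent.
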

	
	\begin{proof}
		For every $s\in\R$, taking into account \eqref{eq:FourierTransformGaussian}, we have that
		\begin{eqnarray*}
			\widehat{f_\alpha}(s)&=&\int_\R e^{-\pi d^2(x,[-\alpha ,\alpha])}e^{ixs}dx=2\int_0^\infty e^{-\pi d^2(x,[-\alpha,\alpha])}\cos(xs)dx\cr
			&=&2\int_0^\alpha\cos(xs)dx+2\int_\alpha^\infty e^{-\pi(x-\alpha)^2}\cos(xs)dx\cr
			&=&2\int_0^\alpha\cos(xs)dx+2\int_0^\infty e^{-\pi y^2}\cos((y+\alpha)s)dy\cr
			&=&2\int_0^\alpha\cos(xs)dx+2\cos(\alpha s)\int_0^\infty e^{-\pi y^2}\cos(ys)dy-2\sin(\alpha s)\int_0^\infty e^{-\pi y^2}\sin(ys)dy\cr
			&=&2\int_0^\alpha \cos(xs)dx+\cos(\alpha s)e^{-\frac{s^2}{4\pi}}-2\sin(\alpha s)\int_0^\infty e^{-\pi y^2}\sin(ys)dy.\cr
		\end{eqnarray*}
Since
$$
\int_0^\alpha \cos(xs)dx=\begin{cases}
\frac{\sin(\alpha s)}{s} &\textrm{ if }s\neq0\\
\alpha  &\textrm{ if }s=0\\
\end{cases}
$$
we obtain the result.
	\end{proof}

Along this section, given $\alpha>0$, we will denote, for any $s\in\R$,
\begin{equation}\label{eq:abc}
a_\alpha(s)=\begin{cases}\frac{2\sin(\alpha s)}{s}&\textrm{ if }s\neq0\\2\alpha&\textrm{ if }s=0\end{cases},\quad b_\alpha(s)=\cos(\alpha s)e^{-\frac{s^2}{4\pi}},\quad\textrm{ and }\quad c_\alpha(s)=-2\sin(\alpha s)I(s),
\end{equation}
where,
$$
I(s)=\int_0^{\infty} e^{-\pi y^2}\sin(ys)dy.
$$
With this notation, for any $\alpha>0$ we have
$$
\widehat{f_\alpha}(s)=a_\alpha(s)+b_\alpha(s)+c_\alpha(s),\quad\forall s\in\R.
$$

The following lemma concerns the behavior of $I(s)$.

\begin{lemma}\label{lem:UpperBound 1-sI(s)}
There exists \(M>0\) such that
\[
\big|1-sI(s)\big|\le \frac{M}{1+s^2},\quad\forall s\in\R.
\]
\end{lemma}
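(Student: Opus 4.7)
The plan is to reduce the estimate to a controlled integration by parts that exploits the Gaussian decay together with the fact that the integration is only over the half-line. First I would integrate by parts once in the defining integral of $sI(s)=s\int_0^\infty e^{-\pi y^2}\sin(ys)\,dy$, choosing $u=e^{-\pi y^2}$ and $dv=s\sin(ys)\,dy$. The boundary term at $y=0$ contributes exactly $1$, and what remains produces the identity
\[
1 - sI(s) = 2\pi J(s), \qquad J(s):=\int_0^\infty y e^{-\pi y^2}\cos(ys)\,dy.
\]
The task then reduces to proving $|J(s)|\ls C/(1+s^2)$ for some absolute constant $C$.

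Next I would split according to the size of $|s|$. For $|s|\ls 1$ the trivial estimate $|J(s)|\ls \int_0^\infty y e^{-\pi y^2}\,dy=1/(2\pi)$ yields $|1-sI(s)|\ls 1$, which suffices on any fixed compact neighborhood of $0$. For $|s|\gr 1$ I would extract the $1/s^2$ decay through two further integrations by parts in $y$. The first pass produces no boundary contribution (because of the factor $y$ at $0$ and the Gaussian at $\infty$), leaving $-\tfrac{1}{s}\int_0^\infty (1-2\pi y^2) e^{-\pi y^2}\sin(ys)\,dy$. A second integration by parts now \emph{does} produce a non-vanishing boundary term at $y=0$ equal to $1/s$, and yields an explicit representation of the form
\[
J(s) = -\frac{1}{s^2} + \frac{2\pi}{s^2}\int_0^\infty y(3 - 2\pi y^2) e^{-\pi y^2}\cos(ys)\,dy.
\]
Bounding the remaining integral crudely using the Gaussian gives $|J(s)|\ls C'/s^2$ for every $s\ne 0$; combined with the trivial estimate, this delivers the inequality of the lemma for a suitable constant $M$.

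The main obstacle is not conceptual but bookkeeping: one must track the boundary contributions at $y=0$ accurately through each integration by parts, since it is precisely the non-vanishing boundary term at the second pass that yields the leading $-1/s^2$ piece. This reflects the fact that $2J(s)$ is (up to a constant) the Fourier cosine transform of $|y|e^{-\pi y^2}$, a function that is $C^0$ but not $C^1$ at the origin, and so decays at infinity exactly like $1/s^2$ — no faster rate is available, which is also why the $(1+s^2)^{-1}$ form of the bound is the natural target.
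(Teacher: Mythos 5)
Your proposal is correct and follows essentially the same route as the paper: one integration by parts yields the identity $1-sI(s)=2\pi J(s)$ with $J(s)=\int_0^\infty y e^{-\pi y^2}\cos(ys)\,dy$, and two further integrations by parts (whose only surviving boundary term, $g(0)/s$ with $g(y)=(1-2\pi y^2)e^{-\pi y^2}$, produces the leading $-1/s^2$) give $|J(s)|\le C/s^2$, which is combined with a trivial bound near the origin. Your explicit formula $J(s)=-s^{-2}+2\pi s^{-2}\int_0^\infty y(3-2\pi y^2)e^{-\pi y^2}\cos(ys)\,dy$ agrees with the paper's $-g(0)/s^2-s^{-2}\int_0^\infty g'(y)\cos(ys)\,dy$.
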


\begin{proof}
Let $s\neq 0$ and call \(f(y)=e^{-\pi y^2}\). We first integrate by parts once in the definition of \(I(s)\).
Taking \(dv=\sin(ys)\,dy\) and \(u=f(y)\) we get \(v=-\frac{\cos(ys)}{s}\) and \(du=f'(y)dy=-2\pi y f(y)dy\). Hence
\[
I(s)=\Big[-\frac{f(y)\cos(ys)}{s}\Big]_{0}^{\infty}+\frac{1}{s}\int_0^\infty f'(y)\cos(ys)\,dy.
\]
Using that $\displaystyle{\lim_{y\to\infty}f(y)=0}$, \(f(0)=1\) and \(f'(y)=-2\pi y f(y)\) we obtain
\[
I(s)=\frac{1}{s}-\frac{2\pi}{s}\int_0^\infty y f(y)\cos(ys)\,dy.
\]
Therefore, calling
$$
J(s):=\int_0^\infty y f(y)\cos(ys)\,dy,
$$
we have
$$
1-sI(s)=2\pi J(s),\quad\forall s\neq0.
$$

Let us now show that \(J(s)=O(1/s^2)\) as \(|s|\to\infty\). Integrate \(J(s)\) by parts twice. Taking \(u=y f(y)\) and \(dv=\cos(ys)dy\), we have \(v=\frac{\sin(ys)}{s}\) and $du=g(y)dy$, with
\[
g(y):=u'(y)=\frac{d}{dy}\big(y e^{-\pi y^2}\big)=(1-2\pi y^2)e^{-\pi y^2}.
\]
Thus,
\[
J(s)=\Big[\frac{y f(y)\sin(ys)}{s}\Big]_0^\infty-\frac{1}{s}\int_0^\infty g(y)\sin(ys)\,dy
=-\frac{1}{s}\int_0^\infty g(y)\sin(ys)\,dy,
\]
since $\displaystyle{\lim_{y\to\infty}f(y)=0}$.

Let us integrate the remaining integral by parts again, with $u=g(y)$ and \(dv=\sin(ys)\,dy\). Thus, \(v=-\frac{\cos(ys)}{s}\) and $du=g^\prime(y)dy$ and then
\[
\int_0^\infty g(y)\sin(ys)\,dy
=\Big[-\frac{g(y)\cos(ys)}{s}\Big]_0^\infty+\frac{1}{s}\int_0^\infty g'(y)\cos(ys)\,dy.
\]
Since  $\displaystyle{\lim_{y\to\infty}g(y)=\lim_{y\to\infty}(1-2\pi y^2)e^{-\pi y^2}=0}$, and $g(0)=1$,
\[
\int_0^\infty g(y)\sin(ys)\,dy=\frac{1}{s}+\frac{1}{s}\int_0^\infty g'(y)\cos(ys)\,dy,
\]
and therefore
\[
J(s)=-\frac{1}{s^2}-\frac{1}{s^2}\int_0^\infty g'(y)\cos(ys)\,dy.
\]

From this representation we obtain that for every \(s\neq0\),
\[
|J(s)| \le \frac{1}{s^2}+\frac{1}{s^2}\int_0^\infty |g'(y)|\,dy
= \frac{C}{s^2},
\]
where
\[
C:=1+\int_0^\infty |g'(y)|\,dy,
\]
as it can be verified through a direct calculation that the integral on the right hand side is finite.

Consequently, for every \(s\neq0\),
\[
|1-sI(s)|=2\pi|J(s)|\le \frac{2\pi C}{s^2}.
\]

To obtain a single inequality valid for all \(s\in\mathbb{R}\), including small \(|s|\), define
\[
M_0:=\sup_{|s|\le 1}|1-sI(s)|\leq 1+\int_0^\infty e^{-\pi y^2}dy=\frac{3}{2},
\]
and set
\[
M:=\max\{2M_0,\;4\pi C\,\}.
\]
If \(|s|\le 1\) then \(1+s^2\le 2\) and so
\[
\frac{M}{1+s^2}\ge \frac{2M_0}{2}=M_0\ge |1-sI(s)|.
\]
If \(|s|\ge 1\) then $\frac{1}{s^2}\le \frac{2}{1+s^2}$. Hence, from the bound above
\[
|1-sI(s)|\le \frac{2\pi C}{s^2}\le \frac{4\pi C}{1+s^2}\le\frac{M}{1+s^2}.
\]
Thus for every \(s\in\mathbb{R}\) we have
\[
\big|1-sI(s)\big|\le \frac{M}{1+s^2},
\]
as desired.
\end{proof}

We will estimate from above the Wills functional of the section of a convex body $K$ in John's position applying the geometric Brascamp-Lieb inequality, in the spirit of Section \ref{sec:Parseval}. In order to state the next proposition let us denote, in view of Lemma \ref{lem:FourierTransformDistanceSquared}, for any $j\in J$ such that $\tilde{c}_j\in (0,1)$,
$$
h_{\lambda,j}(s)=\left|\widehat{f}_{\lambda\sqrt{\tilde{c}_j}t_j}(s)\right|^\frac{1}{1-\tilde c_j}.
$$
That is, for any $s\in\R\setminus\{0\}$
		$$
			h_{\lambda,j}(s)=\left|\frac{2\sin(\lambda\sqrt{\tilde{c}_j} t_js)}{s}+\cos(\lambda\sqrt{\tilde{c}_j} t_js)e^{-\frac{s^2}{4\pi}}-2\sin(\lambda\sqrt{\tilde{c}_j} t_js)\int_0^\infty e^{-\pi y^2}\sin(ys)dy\right|^{\frac{1}{1-\tilde{c}_j}}
		$$
and
$$
h_{\lambda,j}(0)=(2\lambda\sqrt{\tilde{c}_j}t_j+1)^\frac{1}{1-\tilde{c}_j}.
$$

\begin{proposition}\label{prop:WillsSection}
		Let $K\subseteq\R^n$ be a centrally symmetric convex body in John's position and let $H\in G_{n,k}$ and let $(c_j,v_j)_{j=1}^m\subseteq (0,1]\times(\partial K\cap S^{n-1})$ be its associated decomposition of the identity. Let, for every $j\in J=\{1\ls j\ls m\,:\,P_Hv_j\neq 0\}$, $\tilde{c}_j=c_j\Vert P_Hv_j\Vert_2^2$. Then, for any $\lambda>0$ we have that
		$$
		\mathcal{W}(\lambda(K\cap H))\leq\frac{1}{(2\pi)^{m_0-k}}\prod_{j\in J}\left(\frac{\int_\R h_{\lambda,j}(s)ds}{\sqrt{1-\tilde{c}_j}}\right)^{1-\tilde{c}_j},
		$$
		where, for every $j\in J$ such that $\tilde{c}_j=1$, necessarily $c_j=\Vert P_Hv_j\Vert_2=1$ and the corresponding factor is understood as $\left|\widehat{f}_{\lambda\sqrt{\tilde{c}_j}t_j}(0)\right|=2\lambda\sqrt{\tilde{c}_j}t_j+1=2\lambda+1$.
	\end{proposition}
	
	\begin{proof}
		Let $L_1:=L\cap H$. We have that for every $j\in J$
		$$
		P_{\langle u_j\rangle}(\lambda L_1)\subseteq [-\lambda t_j,\lambda t_j]u_j.
		$$
		Therefore, for every $x\in H$,
		$$
		d(\langle x,u_j\rangle u_j,P_{\langle u_j\rangle}(\lambda L_1))\geq d(\langle x, u_j\rangle u_j,[-\lambda t_j,\lambda t_j]u_j)=d(\langle x, u_j\rangle ,[-\lambda t_j,\lambda t_j]).
		$$
		Since for every $x_0\in \lambda L_1$, every $x\in H$ and every $j\in J$ we have that
		$$
		d^2(\langle x,u_j\rangle u_j,P_{\langle u_j\rangle}(\lambda L_1))\leq d^2(\langle x,u_j\rangle u_j,\langle x_0, u_j\rangle u_j)=\langle x-x_0,u_j\rangle^2,
		$$
		we have that for every $x_0\in\lambda L_1$ and every $x\in H$,
		$$
		\sum_{j\in J}\tilde{c}_jd^2(\langle x,u_j\rangle u_j,P_{\langle u_j\rangle}(\lambda L_1))\leq\sum_{j\in J}\tilde{c}_j\langle x-x_0,u_j\rangle^2=\Vert x-x_0\Vert_2^2
		$$
		and, taking infimum in $x_0\in\lambda L_1$, we have that
\[
			d^2(x,\lambda L_1)\geq\sum_{j\in J}\tilde{c}_jd^2(\langle x,u_j\rangle u_j,P_{\langle u_j\rangle}(\lambda L_1))\geq\sum_{j\in J}\tilde{c}_jd^2(\langle x, u_j\rangle ,[-\lambda t_j,\lambda t_j]).
\]
		Therefore, calling $f_j(t)=e^{-\pi d^2(t ,[-\lambda\sqrt{\tilde{c}_j} t_j,\lambda\sqrt{\tilde{c}_j} t_j])}$ for every $j\in J$, by \eqref{eq:WillsIntegral}, we have that
		\begin{align*}
			\mathcal{W}(\lambda(K\cap H))&\leq\mathcal{W}(\lambda L_1) =\int_H e^{-\pi d^2(x,\lambda C)}dx\leq\int_He^{-\sum_{j\in J}\tilde{c}_j\pi d^2(\langle x, u_j\rangle ,[-\lambda t_j,\lambda t_j])}dx\\
			&=\int_He^{-\sum_{j\in J}\pi d^2(\sqrt{\tilde{c}_j}\langle x, u_j\rangle ,[-\lambda\sqrt{\tilde{c}_j} t_j,\lambda\sqrt{\tilde{c}_j} t_j])}dx\\
			&=\int_H\prod_{j\in J} f_j(\sqrt{\tilde{c}_j}\langle x, u_j\rangle) dx.
\end{align*}
By Corollary \ref{cor:UpperBoundParsevalAndBrascampLieb} and Lemma \ref{lem:FourierTransformDistanceSquared} it follows then that,
\begin{align*}
			\mathcal{W}(\lambda C)&\leq\frac{1}{(2\pi)^{m_0-k}}\prod_{j\in J}\left(\int_{\R}\left|\widehat{f_j}^\frac{1}{1-\tilde{c}_j}(\sqrt{1-\tilde{c}_j}t)\right|dt\right)^{1-\tilde{c}_j}\\
			&=\frac{1}{(2\pi)^{m_0-k}}\prod_{j\in J}\left(\frac{\int_\R h_{\lambda,j}(s)ds}{\sqrt{1-\tilde{c}_j}}\right)^{1-\tilde{c}_j}.\qedhere
\end{align*}
\end{proof}

In the following subsections, we are going to recover estimates for the volume and the mean width of a centrally symmetric convex body in John's position, from the estimate given for the Wills functional in Proposition \ref{prop:WillsSection}.

\subsection{The volume of sections via the Wills functional}\label{subsec:VolumeSectionsWills}
In this Section we show how, from Proposition \ref{prop:WillsSection}, we can recover Theorem \ref{thm:UpperBoundVolume}. 



\begin{proof}[Proof of Theorem \ref{thm:UpperBoundVolume} via the Wills functional]
Let, for any  $\alpha>0$, $a_\alpha(s),b_\alpha(s)$, and $c_\alpha(s)$ be defined as in \eqref{eq:abc}. For any $p>1$, By Minkowski's inequality in $L^p(\R)$, we have that
$$
\left(\int_\R|\widehat{f_\alpha}(s)|^pds\right)^\frac{1}{p}\leq\left(\int_\R|a_\alpha(s)|^p\,ds\right)^\frac{1}{p}+\left(\int_\R|b_\alpha(s)|^p\,ds\right)^\frac{1}{p}+\left(\int_\R|c_\alpha(s)|^p\,ds\right)^\frac{1}{p}.
$$
Let us estimate each term:
\begin{itemize}
	\item By the scaling $u=\alpha s$,
	\[
	\left(\int_{\mathbb R}|a_\alpha(s)|^{p}\,ds\right)^{1/p}
	=2I_{p}^{1/p}\,|\alpha|^{(p-1)/p}.
	\]
	
	\item For the second term,
	\[
	\left(\int_{\mathbb R}|b_\alpha(s)|^{p}\,ds\right)^{1/p}\leq\left(\int_{\mathbb R}|b_0(s)|^{p}\,ds\right)^{1/p}
	=\left(\int_{\mathbb R}e^{-\frac{p s^2}{4\pi}}\,ds\right)^{1/p}
	=\left(\frac{2\pi}{\sqrt{p}}\right)^{1/p}.
	\]
	
	\item For the third term, notice that if $|s|\leq 1$ we have
$$
|I(s)|=\left|\int_0^\infty e^{-\pi y^2}\sin(ys)\,dy\right|\leq\int_0^\infty e^{-\pi y^2}\,dy=\frac{1}{2}\leq\frac{1}{1+s}\leq\frac{4}{1+s},
$$
and that, if $|s|\geq1$,
$$
\left|\frac{2\pi}{s}\int_0^\infty ye^{-\pi y^2}\cos(ys)\,dy\right|\leq\frac{2\pi}{|s|}\int_0^\infty ye^{-\pi y^2}\,dy=\frac{1}{|s|}
$$
and then
$$
|I(s)|=\left|\frac{1}{s}-\frac{2\pi}{s}\int_0^\infty ye^{-\pi y^2}\cos(ys)\,dy\right|\leq\frac{2}{|s|}\leq\frac{4}{1+|s|}.
$$
Therefore,
$$
\left(\int_{\mathbb R}|c_\alpha(s)|^{p}\,ds\right)^{1/p}\leq8\left(\int_\R\frac{ds}{(1+|s|)^p}\right)^{1/p}=8\frac{2^{1/p}}{(p-1)^{1/p}}.
$$
\end{itemize}
As a consequence, we have that for every fixed $j\in J$ such that $\tilde{c}_j\neq1$, taking $p_j=\frac{1}{1-\tilde{c}_j}$, there exists $M_j:[0,\infty)\to[0,\infty)$ such that $M_j(\alpha)\sim 2I_{p_j}^{1/p_j}\alpha^{\tilde{c}_j}$, as $\alpha\to\infty$, and
$$
\left(\int_\R|\widehat{f_\alpha}(s)|^{p_j}ds\right)^\frac{1}{p_j}\leq M_j(\alpha).
$$
Thus, the function $\displaystyle{M(\lambda):=\prod_{j\in J}M_j(\lambda\sqrt{\tilde{c}_j}t_j)}$, where if $\tilde{c}_j=1$, then necessarily $c_j=\Vert P_Hv_j\Vert_2=1$ and the corresponding factor is understood as $2\lambda\sqrt{\tilde{c_j}}t_j+1=2\lambda+1$, satisfies that
$$
M(\lambda)\sim 2^{m_0}\prod_{j\in J}I_{p_j}^{1/p_j}\tilde{c}_j^\frac{\tilde{c}_j}{2}t_j^{\tilde{c}_j}\lambda^k=2^{m_0}\prod_{j\in J}I_{\frac{1}{1-\tilde{c}_j}}^{1-\tilde{c}_j}c_j^\frac{\tilde{c}_j}{2}\lambda^k,\quad\lambda\to\infty,
$$
where if $\tilde{c}_j=1$, the term $I_{p_j}^{1/p_j}=1$. By the estimate granted by Proposition \ref{prop:WillsSection},
\begin{eqnarray*}
\mathcal{W}(\lambda(K\cap H))&\leq& \frac{1}{(2\pi)^{m_0-k}}\prod_{j\in J}\left(\frac{\int_{\R}h_{\lambda,j}(s)ds}{\sqrt{1-\tilde{c}_j}}\right)^{1/p_j}\leq\frac{M(\lambda)}{(2\pi)^{m_0-k}}\prod_{j\in J}\left(\frac{1}{\sqrt{1-\tilde{c}_j}}\right)^{1-\tilde{c}_j}\cr
&=&\frac{M(\lambda)\vol_k(B_\infty^k)}{2^{m_0}}\prod_{j\in J}\left(\frac{1}{\pi\sqrt{1-\tilde{c}_j}}\right)^{1-\tilde{c}_j},
\end{eqnarray*}
where if $\tilde{c}_j=1$, the corresponding factor is understood as $1$. Therefore, by \eqref{eq:WillsVolumeMeanWidth},
$$
\vol_k(K\cap H)=\lim_{\lambda\to\infty}\frac{\mathcal{W}(K\cap H)}{\lambda^k}\leq\vol_k(B_\infty^k)\prod_{j\in J}\left(
\frac{I_{\frac{1}{1-\tilde{c}_j}}}{\pi\sqrt{1-\tilde{c}_j}}\right)^{1-\tilde{c}_j}c_j^{\frac{\tilde c_j}{2}},
$$
which proves the result.
\end{proof}

\subsection{The mean width of sections via the Wills functional}\label{subsec:MeanWidthSectionsWills}

In this section we are going to showcase how, from the estimate in Proposition \ref{prop:WillsSection}, we can recover the estimate for $V_1(K\cap H)$ given in Theorem \ref{thm:SectionsSymmetricJohn}.

\begin{proposition}\label{prop:MeanWidthSectionsViaWills}
Let $K\subseteq\R^n$ be a centrally symmetric convex body in John's position and let $(c_j,v_j)_{j=1}^m\subseteq (0,1]\times(\partial K\cap S^{n-1})$ be its associated decomposition of the identity. Let $H\in G_{n,k}$ be a $k$-dimensional linear subspace and let $J=\{1\leq j\leq m\,:\,P_Hv_j\neq0\}$. Then
$$
V_1(K\cap H)\leq2\sum_{j\in J}c_j\|P_H v_j\|_2.
$$
\end{proposition}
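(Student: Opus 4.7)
The plan is to combine the pointwise inequality $\mathcal{W}(\lambda(K\cap H)) \leq G(\lambda)$ from Proposition \ref{prop:WillsSection}, where
$$G(\lambda) := \frac{1}{(2\pi)^{m_0-k}} \prod_{j\in J} \left(\int_\R h_{\lambda,j}(t)\,dt\right)^{1-\tilde c_j},$$
with the second limit identity of Lemma \ref{lem:FourierTransformWills}. Plugging $\alpha = 0$ into Lemma \ref{lem:FourierTransformDistanceSquared} gives $\widehat{f_0}(z) = e^{-z^2/(4\pi)}$, whence $\int_\R h_{0,j}(t)\,dt = \int_\R e^{-t^2/(4\pi)}\,dt = 2\pi$, and using $\sum_{j\in J}\tilde c_j = \mathrm{tr}\,\mathrm{Id}_H = k$ one finds $G(0) = 1 = \mathcal{W}(\{0\})$. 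Since $\mathcal{W}(\lambda(K\cap H)) - 1 \leq G(\lambda) - G(0)$ for every $\lambda \geq 0$, Lemma \ref{lem:FourierTransformWills} yields $V_1(K\cap H) \leq G'(0^+)$, reducing the problem to computing this one-sided derivative.

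Logarithmic differentiation at $\lambda = 0$ gives
$$G'(0) = \sum_{j\in J} \frac{1-\tilde c_j}{2\pi}\, Q_j, \qquad Q_j := \frac{d}{d\lambda}\bigg|_{\lambda=0}\int_\R h_{\lambda,j}(t)\,dt.$$
The substitution $s = \sqrt{1-\tilde c_j}\,t$ rewrites $h_{\lambda,j}(t) = |A_\alpha(s)|^{p_j}$ with $\alpha = \lambda \sqrt{\tilde c_j}\,t_j$ and $p_j = (1-\tilde c_j)^{-1}$, so $Q_j = (1-\tilde c_j)^{-1/2}\,P_j'(0)$ with $P_j(\lambda) := \int_\R |A_{\alpha}(s)|^{p_j}\,ds$. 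A term-by-term differentiation of the formulas for $a_\alpha, b_\alpha, c_\alpha$ at $\alpha = 0$ yields $\partial_\alpha A_\alpha(s)|_{\alpha=0} = 2(1-sI(s))$. Since $A_0(s) = e^{-s^2/(4\pi)}>0$, interchanging derivative and integral (justified below) gives
$$P_j'(0) = 2p_j \sqrt{\tilde c_j}\,t_j \int_\R e^{-(p_j-1)s^2/(4\pi)}\,(1-sI(s))\,ds.$$
For the remaining integral I would use the representation $1-sI(s) = 2\pi J(s)$ with $J(s) = \int_0^\infty y e^{-\pi y^2}\cos(ys)\,dy$ established inside the proof of Lemma \ref{lem:UpperBound 1-sI(s)}, swap orders by Fubini, and perform two standard Gaussian integrations to obtain the value $\tfrac{2\pi\sqrt{p_j-1}}{p_j}$. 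Using $\sqrt{p_j - 1} = \sqrt{\tilde c_j}/\sqrt{1-\tilde c_j}$, this collapses to $Q_j = 4\pi \tilde c_j t_j/(1-\tilde c_j)$, whence
$$G'(0) = \sum_{j\in J} 2\tilde c_j t_j = 2\sum_{j\in J} c_j \|P_H v_j\|_2,$$
since $\tilde c_j t_j = c_j \|P_H v_j\|_2$.

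The main technical obstacle is to justify the interchange of derivative and integral in the formula for $P_j'(0)$. Uniformly for $|\alpha|$ in a neighborhood of $0$, the elementary pointwise estimates $|a_\alpha(s)|\leq \min(2|\alpha|, 2/|s|)$, $|b_\alpha(s)|\leq e^{-s^2/(4\pi)}$, and $|c_\alpha(s)|\leq \min(1, |\alpha s|)$ (using $|I(s)|\leq 1/2$), combined with the decay bound from Lemma \ref{lem:UpperBound 1-sI(s)}, produce an integrable dominating function for the difference quotient $\lambda^{-1}(|A_{\alpha_j(\lambda)}|^{p_j} - |A_0|^{p_j})$, so dominated convergence delivers the required interchange and concludes the proof.
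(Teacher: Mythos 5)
Your proposal is correct and follows essentially the same route as the paper: your computation of $P_j'(0)$ is exactly the content of Lemma~\ref{lem:ExpansionIntegral} (same pointwise derivative $2p\,b_0^{p-1}(s)(1-sI(s))$, same two Gaussian integrals giving $\tfrac{2\pi}{\sqrt{p-1}}$ and $\tfrac{2\pi}{p\sqrt{p-1}}$, same dominated-convergence justification), and the paper then multiplies the first-order expansions of the factors where you logarithmically differentiate the product $G(\lambda)$ --- a purely cosmetic difference. The one point to make explicit in your domination step is that the separate bounds on $a_\alpha$ and $c_\alpha$ do not by themselves control the difference quotient (for instance $|c_\alpha(s)|/\alpha\le|s|$ is not integrable against $\max\{|A_\alpha|,b_0\}^{p-1}$ when $p_j<2$); one must group $a_\alpha(s)+c_\alpha(s)=\tfrac{2\sin(\alpha s)}{s}\bigl(1-sI(s)\bigr)$ so that the decay bound of Lemma~\ref{lem:UpperBound 1-sI(s)} applies to the quotient, while the individual estimates serve only to bound the factor $|\xi_\alpha(s)|^{p-1}$, which is precisely how the paper organises the argument.
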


For the proof we will rely on the following technical lemma, whose proof we postpone until the end of the section.

\begin{lemma}\label{lem:ExpansionIntegral}
	For every $p>1$ we have that
	\[
	\int_{\mathbb R}|\widehat{f_\alpha}(s)|^p\,ds = S_0 + \alpha\,S_1 + o(\alpha),\quad\alpha\to0^+,
	\]
	where
	\[
	S_0=\int_{\mathbb R} b_\alpha(s)^p\,ds=\frac{2\pi}{\sqrt p},\quad\textrm{and}\quad S_1=4\pi\sqrt{p-1}.
	\]
\end{lemma}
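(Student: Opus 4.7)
The strategy is to view $\Phi(\alpha):=\int_{\mathbb{R}}|A_\alpha(s)|^p\,ds$ as a function of $\alpha$ that is differentiable from the right at $\alpha=0$, and identify $S_1$ as $\Phi'(0^+)$ by differentiating under the integral sign. Pointwise, since $A_0(s)=b_0(s)=e^{-s^2/(4\pi)}>0$ for every $s\in\mathbb{R}$, the map $t\mapsto |A_t(s)|^p=A_t(s)^p$ is $C^1$ in a neighbourhood of $0$ (recall $p>1$), with derivative at $t=0$ equal to $p\,b_0(s)^{p-1}\phi(s)$, where $\phi(s):=2(1-sI(s))$. Setting $\psi_t(s):=\partial_tA_t(s)=2\cos(ts)(1-sI(s))-s\sin(ts)e^{-s^2/(4\pi)}$, we will write
\[
\Phi(\alpha)-S_0=\int_{\mathbb{R}}\int_0^\alpha p|A_t(s)|^{p-1}\sgn(A_t(s))\,\psi_t(s)\,dt\,ds,
\]
interpret the integrand as $0$ where $A_t(s)=0$, exchange the order of integration by Fubini, and apply dominated convergence to $F(t):=p\int|A_t|^{p-1}\sgn(A_t)\psi_t\,ds$ as $t\to 0^+$ to conclude $\Phi(\alpha)-S_0=\alpha F(0)+o(\alpha)$.

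The main technical step is the construction of a $t$-uniform $L^1(\mathbb{R})$ majorant for the integrand. The key observation is that estimating $a_t$ and $c_t$ separately yields only a $1/|s|$ tail for $A_t$, which through the mean value theorem would produce a $1/|s|^p$ tail and fail to be integrable for $p$ close to $1$. One must exploit the cancellation $a_t(s)+c_t(s)=2\sin(ts)(1/s-I(s))$ and invoke Lemma \ref{lem:UpperBound 1-sI(s)}, which gives $|1/s-I(s)|=|1-sI(s)|/|s|\le M/(|s|(1+s^2))$. Together with $|\sin x|\le 1$ and $|\cos x|\le 1$, this yields, uniformly in $t\in[0,1]$,
\[
|A_t(s)|\le\frac{C_1}{1+s^2}+e^{-s^2/(4\pi)},\qquad |\psi_t(s)|\le\frac{2M}{1+s^2}+|s|e^{-s^2/(4\pi)}.
\]
Multiplying these and using $p>1$ produces an $L^1$ dominator (since $(1+s^2)^{-p}$ is integrable and the remaining terms decay super-polynomially), which simultaneously legitimises Fubini and, combined with the pointwise convergences $A_t(s)\to b_0(s)>0$ and $\psi_t(s)\to\phi(s)$, gives $F(t)\to F(0)$ as $t\to 0^+$.

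It remains to compute
\[
F(0)=2p\int_{\mathbb{R}}e^{-(p-1)s^2/(4\pi)}(1-sI(s))\,ds.
\]
The first piece is the Gaussian integral $2p\cdot 2\pi/\sqrt{p-1}$. For the $sI(s)$ term, insert $I(s)=\int_0^\infty e^{-\pi y^2}\sin(ys)\,dy$ and apply Fubini; the inner integral is evaluated via $\int_{\mathbb{R}}s\sin(ys)e^{-as^2}\,ds=\sqrt{\pi/a}\cdot(y/2a)e^{-y^2/(4a)}$ with $a=(p-1)/(4\pi)$, and one further Gaussian integration in $y$ yields $\int sI(s)e^{-(p-1)s^2/(4\pi)}\,ds=2\pi/(p\sqrt{p-1})$. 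Combining, $F(0)=(4\pi p/\sqrt{p-1})(1-1/p)=4\pi\sqrt{p-1}=S_1$, while $S_0=\int b_0^p\,ds=\int e^{-ps^2/(4\pi)}\,ds=2\pi/\sqrt{p}$ is immediate. The hardest part is really the dominator construction in the middle step: it amounts to recognising that the $1/|s|$ tails of $a_t$ and $c_t$ cancel to order $1/|s|^3$, an essentially quantitative consequence of Lemma \ref{lem:UpperBound 1-sI(s)}.
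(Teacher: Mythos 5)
Your proposal is correct and follows essentially the same route as the paper's proof: both reduce the lemma to computing the right derivative of $\alpha\mapsto\int_{\mathbb R}|A_\alpha(s)|^p\,ds$ at $0$, both hinge on the cancellation $a_\alpha(s)+c_\alpha(s)=\tfrac{2\sin(\alpha s)}{s}\bigl(1-sI(s)\bigr)$ combined with Lemma \ref{lem:UpperBound 1-sI(s)} to build the uniform $L^1$ majorant for dominated convergence, and both evaluate the same two Gaussian integrals to obtain $S_1=4\pi\sqrt{p-1}$. The only (cosmetic) difference is that you pass from the difference quotient to the derivative via the fundamental theorem of calculus in $t$ plus Fubini, whereas the paper applies the mean value theorem to $x\mapsto|x|^p$ with an intermediate point $\xi_\alpha(s)$; the dominator needed is the same in either case.
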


\begin{proof}[Proof of Proposition \ref{prop:MeanWidthSectionsViaWills}]
Let, for every $j\in J$ such that $\tilde{c}_j=c_j\Vert P_Hv_j\Vert_2^2\neq1$, $p_j=\frac{1}{1-\tilde{c}_j}>1$, and $t_j=\frac{1}{\Vert P_Hv_j\Vert_2}$. By Lemma \ref{lem:ExpansionIntegral} we have that for every $j\in J$ and every $\lambda>0$, calling $\alpha_j=\lambda\sqrt{\tilde{c}_j}t_j$
\begin{eqnarray*}
\frac{\int_{\mathbb R}h_{\lambda,j}(t)dt}{\sqrt{1-\tilde{c}_j}}
&=&\frac{1}{\sqrt{1-\tilde{c}_j}}\left(\frac{2\pi}{\sqrt{p_j}}+4\pi\sqrt{p_j-1}\alpha_j+o(\alpha_j)\right)
=2\pi + \alpha\cdot 4\pi\sqrt{p_j(p_j-1)} + o(\alpha_j)\cr
&=&2\pi\left(1 + 2\alpha_j\sqrt{p_j(p_j-1)} + o(\alpha_j)\right),  \quad\alpha_j\to0^+.
\end{eqnarray*}

Thus,
\begin{eqnarray*}
\left(\frac{\int_{\mathbb R}h_{\lambda,j}(t)dt}{\sqrt{1-\tilde{c}_j}}\right)^{1/p_j}
&=&(2\pi)^{1/p_j}\left(1 + \tfrac{2}{p_j}\alpha_j\sqrt{p_j(p_j-1)} + o(\alpha_j)\right),\cr
&=&(2\pi)^{1/p_j}\left(1 + 2\sqrt{\tilde{c}_j}\alpha_j+ o(\alpha_j)\right),\quad\alpha_j\to0^+.
\end{eqnarray*}
Equivalently, taking into account that $\alpha_j=\lambda\sqrt{\tilde{c}_j}t_j$,
$$
\left(\frac{\int_{\mathbb R}h_{\lambda,j}(t)dt}{\sqrt{1-\tilde{c}_j}}\right)^{1-\tilde c_j}
=(2\pi)^{1-\tilde{c}_j}\left(1 + 2\lambda\tilde{c}_jt_j + o(\lambda)\right),\lambda\to0^+.
$$
By Proposition \ref{prop:WillsSection}, Multiplying over $j\in J$, understanding the factors corresponding to $\tilde{c}_j=1$ as $1+2\lambda\sqrt{\tilde{c}_j}t_j=1+2\lambda=1+2\lambda\tilde{c}_jt_j$ and taking into account that $\sum_{j\in J}(1-\tilde{c}_j)=m_0-k$, we obtain
$$
\mathcal W(\lambda(K\cap H))
\leq 1 + 2\lambda\sum_{j\in J}\tilde{c}_j t_j + o(\lambda),\quad\lambda\to0^+.
$$
Therefore, by Lemma \eqref{eq:WillsVolumeMeanWidth},
$$
V_1(K\cap H)=\lim_{\lambda\to0^+}\frac{\mathcal W(\lambda(K\cap H))-1}{\lambda}\leq2\sum_{j\in J}\tilde{c}_j t_j=2\sum_{j\in J}c_j\|P_H v_j\|_2.
\eqno\qedhere$$
\end{proof}
It remains to justify the estimate in Lemma \ref{lem:ExpansionIntegral}.
\begin{proof}[Proof of Lemma \ref{lem:ExpansionIntegral}]
Let $g:[0,\infty)\to\R$ be the function
$$
g(\alpha)=\int_{\mathbb R}|\widehat{f_\alpha}(s)|^p\,ds.
$$
Let us prove that $g$ is differentiable at 0 with $g^\prime(0)=S_1=4\pi\sqrt{p-1}$. Therefore, since
$$
g(0)=\int_\R|A_0(s)|^pds=\int_\R b_0(s)ds=\int_{\R}e^{-p\frac{s^2}{4\pi}}ds=\frac{2\pi}{\sqrt p},
$$
we will have the result.

Since $p>1$, we have that the function $|x|^p$ is differentiable on $\R$ and, by the mean value theorem, for every $s\in\R$ there exists $\xi_\alpha(s)$ between $\widehat{f_\alpha}(s)$ and $b_0(s)$ such that
$$
|\widehat{f_\alpha}(s)|^p-b_0^p(s)=p|\xi_\alpha(s)|^{p-1}\textrm{sign}(\xi)(A_\alpha(s)-b_0(s)).
$$
Therefore, taking into account that for any $s\in\R$ $\displaystyle{\lim_{\alpha\to0^+}A_\alpha(s)=b_0(s)>0}$ and then $\displaystyle{\lim_{\alpha\to0^+}\xi_{\alpha}(s)}=b_0(s)$, we have that for every $s\in\R$,
\begin{eqnarray*}
\lim_{\alpha\to0^+}\frac{|\widehat{f_\alpha}(s)|^p-b_0^p(s)}{\alpha}&=&\lim_{\alpha\to0^+}\frac{p|\xi_\alpha(s)|^{p-1}\textrm{sign}(\xi_\alpha(s))(\widehat{f_\alpha}(s)-b_0(s))}{\alpha}\cr
&=&\lim_{\alpha\to0^+}\frac{p|\xi_\alpha(s)|^{p-1}(a_\alpha(s)+b_{\alpha}(s)-b_0(s)+c_\alpha(s))}{\alpha}\cr
&=&pb_0(s)^{p-1}(2-2sI(s)),
\end{eqnarray*}
where the last limit is directly obtained from the definitions of $a_\alpha(s), b_\alpha(s)$, and $c_\alpha(s)$ given in \eqref{eq:abc}. Moreover, for every $0<\alpha<1$ and every $s\in\R$,
\begin{eqnarray*}
\left|\frac{|\widehat{f_\alpha}(s)|^p-b_0^p(s)}{\alpha}\right|&=&\frac{p|\xi_\alpha(s)|^{p-1}|\widehat{f_\alpha}(s)-b_0(s)|}{\alpha}\cr
&=&\frac{p|\xi_\alpha(s)|^{p-1}|a_\alpha(s)+b_{\alpha}(s)-b_0(s)+c_\alpha(s)|}{\alpha}\cr
&\leq&p\max\{|\widehat{f_\alpha}(s)|^{p-1}, b_0^{p-1}(s)\}\left(\frac{|a_\alpha(s)+c_\alpha(s)|}{\alpha}+\frac{|b_\alpha(s)-b_0(s)|}{\alpha}\right)
\end{eqnarray*}

On the one hand, notice that for every $0<\alpha<1$ and every $s\in\R$, by the mean value theorem there exists $0<\beta_\alpha(s)<\alpha s$ such that
$$
\frac{|a_\alpha(s)+c_\alpha(s)|}{\alpha}=2|\cos(\beta_\alpha(s))||1-sI(s)|\leq 2|1-sI(s)|\leq \frac{2M}{1+s^2},.
$$
where $M$ is the constant given by Lemma \ref{lem:UpperBound 1-sI(s)}.


On the other hand, for every $0<\alpha<1$ and every $s\in\R$, by the mean value theorem there exists $0<\gamma_\alpha(s)<\alpha s$ such that
$$
\frac{|b_\alpha(s)-b_0(s)|}{\alpha}=|\sin(\gamma_\alpha(s))||s|e^{-\frac{s^2}{4\pi}}\leq|s|e^{-\frac{s^2}{4\pi}}\leq \frac{C_1}{1+s^2},
$$
where $C_1>0$ is an absolute constant. Therefore, there exists an absolute constant $C_2>0$ such that for every $0<\alpha<1$ and every $s\in\R$
$$
\left|\frac{|\widehat{f_\alpha}(s)|^p-b_0^p(s)}{\alpha}\right|\leq \frac{C_2p}{1+s^2}\max\{|\widehat{f_\alpha}(s)|^{p-1}, b_0^{p-1}(s)\}
$$

Now, notice that for every $p> 1$ we have that, for every $0<\alpha<1$ and every $s\in\R$,
\begin{eqnarray*}
|\widehat{f_\alpha}(s)|^{p-1}&\leq& 3^{p-1}\max\{|a_\alpha(s)|^{p-1}+|b_\alpha(s)|^{p-1}+|c_\alpha(s)|^{p-1}\}\cr
&\leq&3^{p-1}\max\{|a_\alpha(s)|^{p-1}+|b_0(s)|^{p-1}+|c_\alpha(s)|^{p-1}\}\cr
\end{eqnarray*}
and then
$$
\max\{|\widehat{f_\alpha}(s)|^{p-1}, b_0^{p-1}(s)\}\leq3^{p-1}\max\{|a_\alpha(s)|^{p-1}+|b_0(s)|^{p-1}+|c_\alpha(s)|^{p-1}\}.
$$

Since for any $p>1$, $0<\alpha<1$, and every $s\in\R$,
$$
|a_\alpha(s)|^{p-1}\leq2^{p-1}\min\left\{1,\frac{1}{|s|^{p-1}}\right\},\quad|b_0(s)|^{p-1}= e^{-(p-1)\frac{s^2}{4\pi}}\leq\frac{C_3}{|s|^{p-1}},
$$
where $C_3>0$ is an absolute constant, and there exists an absolute constant $C_4>0$ such that
$$
|c_\alpha(s)|^p\leq2^{p-1}|I(s)|^{p-1}\leq C_4^{p-1}\min\left\{1,\frac{1}{|s|^{p-1}}\right\}.
$$
we have that for every $p>1$ there exists an absolute constant $C>0$  such that for every $0<\alpha<1$  and every $s\in\R$
$$
\left|\frac{|\widehat{f_\alpha}(s)|^p-b_0^p(s)}{\alpha}\right|\leq C\min\left\{1,\frac{1}{|s|^{p+1}}\right\}\in L^1(\R).
$$
Therefore, by the dominated convergence theorem,
\begin{eqnarray*}
\lim_{\alpha\to0^+}\frac{g(\alpha)-g(0)}{\alpha}&=&\lim_{\alpha\to0^+}\int_{\R}\frac{|\widehat{f_\alpha}(s)|^p-b_0^p(s)}{\alpha}ds=\int_\R pb_0^{p-1}(s)(2-2sI(s))ds\cr&=&
2p\int_\R b_0^{p-1}(s)ds-2p\int_\R sI(s)b_0^{p-1}(s)ds
\end{eqnarray*}
The first integral is
	\[
	\int_{\mathbb R} b_0^{p-1}(s)\,ds=\frac{2\pi}{\sqrt{p-1}}.
	\]
	For the second one, by Fubini's theorem and the Gaussian–Fourier identity \eqref{eq:FourierTransformGaussian}, one computes
	\[
	\int_{\mathbb R} sI(s)b_0^{p-1}(s)\,ds=\frac{2\pi}{p\sqrt{p-1}}.
	\]
	Hence,
	$$
	S_1=g^\prime(0)=2p\left(\frac{2\pi}{\sqrt{p-1}}-\frac{2\pi}{p\sqrt{p-1}}\right)=4\pi\sqrt{p-1}.
	\eqno\qedhere$$
\end{proof}

\section{Sections of Generalized $\ell_p^n$ Balls}\label{sec:Kp}

In this sections we are going to provide lower bounds for the sections of generalized $\ell_p^n$-balls, $K_p((v_j)_{j=1}^m, (\alpha_j)_{j=1}^m)$, assuming that there exist some scalars $(c_j)_{j=1}^m$ such that $(c_j,v_j)_{j=1}^m\subseteq (0,1]\times S^{n-1}$ provide a decomposition of the identity. For that matter, we will use Theorem \ref{thm:parseval} in order to obtain a formula analogous to \eqref{eq:MeyerPajor}. We will follow the notation established in Section \ref{sec:GeneralSetting}.

Given $\alpha>0$, we will also denote by $f_{\alpha,p}(x)=e^{-\alpha|x|^p}$. Since for any $\alpha>0$ we have that $f_{\alpha,p}(x)=f_{1,p}\left(\alpha^{1/p}x\right)$ for every $x\in\R$, taking into account \eqref{eq:FourierScaling} and denoting
$$
\gamma_p(s):=\widehat{f_{1,p}}=\int_{-\infty}^{+\infty}e^{-|x|^p}e^{ixs}dx,
$$
we have that for any $\alpha>0$,
\begin{equation}\label{eq:FourierTransformGammapScaled}
\widehat{f_{\alpha,p}}(s)=\frac{\gamma_p\left(\frac{s}{\alpha^{1/p}}\right)}{\alpha^{1/p}}.
\end{equation}
With this notation, we have the following extension of \eqref{eq:MeyerPajor}:
\begin{lemma}\label{lem:VolumeSectionsK_p}
Let $(c_j,v_j)_{j=1}^m\subseteq (0,1]\times S^{n-1}$ providing a decomposition of the identity in $\R^n$ and let $(\alpha_j)_{j=1}^m\in(0,\infty)$. Then, for any $H\in G_{n,k}$ we have
$$
\Gamma\left(1+\frac{k}{p}\right)\vol_k(K_p\cap H)=\frac{1}{(2\pi)^{\,m_0-k}}\;
	\int_{H^\perp}\prod_{j\in J}\widehat{f_{\frac{\alpha_j}{c_j^{p/2}},p}}\left(\langle y,x_j\rangle\right)dy,
$$
where $H^\perp$ denotes the orthogonal subspace of $H$ in $\R^{m_0}$. Equivalently, calling $\alpha_p=2\Gamma\left(1+\frac{1}{p}\right)$
$$
\vol_k(K_p\cap H)=\frac{\vol_k(B_p^k)}{(2\pi)^{m_0-k}}\;
	\int_{H^\perp}\prod_{j\in J}\widehat{f_{\frac{\alpha_p\alpha_j}{c_j^{p/2}},p}}\left(\langle y,x_j\rangle\right)dy.
$$
\end{lemma}

\begin{proof}
On the one hand, notice that
$$
\int_{H} e^{-\sum_{j=1}^m\alpha_j|\langle x,v_j\rangle|^p}dx=\int_{H} e^{-\Vert x\Vert_{K_p\cap H}^p}dx=\Gamma\left(1+\frac{k}{p}\right)\vol_k(K_p\cap H).
$$
On the other hand, taking into account that $\tilde{c}_j=c_j\Vert P_Hv_j\Vert_2^2$ for every $j\in J$
\begin{eqnarray*}
\int_{H} e^{-\sum_{j=1}^m\alpha_j|\langle x,v_j\rangle|^p}dx &=& \int_{H} e^{-\sum_{j=1}^m\alpha_j|\langle x,P_Hv_j\rangle|^p}dx=\int_{H} e^{-\sum_{j\in J}\alpha_j|\langle x,P_Hv_j\rangle|^p}dx\cr
			&=&\int_{H} e^{-\sum_{j\in J}\alpha_j\Vert P_Hv_j\Vert_2^p|\langle x,u_j\rangle|^p}dx=\int_{H} e^{-\sum_{j\in J}\frac{\alpha_j}{c_j^{p/2}}|\langle x,\sqrt{\tilde{c}_j}u_j\rangle|^p}dx\cr
			&=&\int_{H} e^{-\sum_{j\in J}\frac{\alpha_j}{c_j^{p/2}}|\langle x,x_j\rangle|^p}dx.
\end{eqnarray*}
By Theorem \ref{thm:parseval}, calling for every $j\in J$, $f_j(x)=e^{-\frac{\alpha_j}{\tilde{c}_j^{p/2}}|x|^p}=f_{\frac{\alpha_j}{c_j^{p/2}},p}(x)$
\[
\int_{H} e^{-\sum_{j\in J}\frac{\alpha_j}{c_j^{p/2}}|\langle x,x_j\rangle|^p}dx = \int_{H} \prod_{j\in J}e^{-\frac{\alpha_j}{c_j^{p/2}}|\langle x,x_j\rangle|^p}dx = \frac{1}{(2\pi)^{m_0-k}}\int_{H^\perp}\prod_{j\in J}\widehat{f_{\frac{\alpha_j}{c_j^{p/2}},p}}\langle y,x_j\rangle dy.\qedhere
\]
\end{proof}

\begin{remark}
If $m=n$ $v_j=e_j$, $c_j=1$, and $\alpha_j=1$ for every $1\leq j\leq n$, and assuming that $P_He_j\neq 0$ for every $1\leq j\leq n$ (otherwise consider $H$ embedded in a lower-dimensional space), we recover the identity \eqref{eq:MeyerPajor}.
\end{remark}

Having the identity provided by Lemma \ref{lem:VolumeSectionsK_p} as a starting point, we are going to provide estimates for the volumes of $K_p\cap H$ whenever $1\leq p\leq2$. We will treat the case $p=1$ separately, since in this case the Fourier transform of $f_{\alpha,p}$ can be computed explicitly.
\subsection{Sections of $K_1$}

In this section we are going to provide estimates for $\vol_k(K_1\cap H)$, whenever $(c_j,v_j)_{j=1}^m\subseteq (0,1]\times S^{n-1}$ provide a decomposition of the identity in $\R^n$ and $H$ is a $k$-dimensional linear subspace. More precisely, we will prove the following Theorem, which extends the estimate \eqref{eq:MeyerPajorSectionsB_1^n}. We also provide an upper bound for these volumes.
\begin{theorem}\label{thm:K1-sec}
Let $(c_j,v_j)_{j=1}^m\subseteq (0,1]\times S^{n-1}$ providing a decomposition of the identity in $\R^n$ and let $(\alpha_j)_{j=1}^m\in(0,\infty)$. Then, for any $H\in G_{n,k}\subseteq G_{m_0,k}$ we have
\[
\vol_k(K_1\cap H)\gr \frac{m_0^{m_0}\Gamma\left(\frac{m_0+k}{2}\right)\prod_{j\in J}\left(\frac{\alpha_j}{\sqrt{c_j}}\right)}{\pi^{\frac{m_0-k}{2}}\Gamma(m_0)\left(\sum_{j\in J}\frac{\alpha_j^2}{c_j}\right)^\frac{m_0+k}{2}}\vol_k(B_1^k).
\]
On the other hand,
\begin{eqnarray*}
\vol_k(K_1\cap H)&\leq&  \vol_k(B_1^k)\frac{\prod_{j\in J}\left(\frac{\sqrt{c_j}}{\alpha_j}\right)^{\tilde{c}_j}}{\pi^{\frac{m_0-k}{2}}}\prod_{j\in J_1}\left(\frac{\Gamma\left(\frac{1}{1-\tilde{c}_j}-\frac{1}{2} \right)}{\sqrt{1-\tilde{c}_j}\Gamma\left(\frac{1}{1-\tilde{c}_j}\right)}\right)^{1-\tilde{c}_j}\cr
&\leq&\vol_k(B_1^k)\prod_{j=1}^m\left(\frac{\sqrt{c_j}}{\alpha_j}\right)^{c_j\Vert P_Hv_j\Vert_2^2},
\end{eqnarray*}
where $J=\{1\leq j\leq m\,:\,P_Hv_j\neq0\}$, for every $j\in J$, $\tilde{c}_j=c_j\Vert P_Hv_j\Vert_2^2$, and $\tilde{J}=\{j\in J\,:\,\tilde{c}_j\neq1\}$.
\end{theorem}
	\begin{remark}
If $m=n$ $v_j=e_j$, $c_j=1$, and $\alpha_j=1$ for every $1\leq j\leq n$, and assuming that $P_He_j\neq 0$ for every $1\leq j\leq n$ (otherwise consider $H$ embedded in a lower-dimensional space), we recover inequality \eqref{eq:MeyerPajorSectionsB_1^n}.
	\end{remark}

\begin{remark}
Let us point out that, since $\sqrt{c}_j\Vert P_H v_j\Vert_2\leq 1$ for every $j\in J$, we have that the upper estimate for $\vol_k(K_1\cap H)$ given by \eqref{eq:UpperBoundSectiosKp} is always tighter than the one given by the last term in Theorem \ref{thm:K1-sec}. Nevertheless, since the proof follows the method introduced in Section \ref{sec:Parseval}, the upper estimate given by the intermediate term, will improve on the one given by \eqref{eq:UpperBoundSectiosKp} whenever $\tilde{c}_j\geq\frac{1}{2}$ for every $j\in J$.
\end{remark}

In order to prove it, we will need a couple of technical lemmas. In the first one, we compute explicitly the Fourier transform of $f_{\alpha,1}$.

\begin{lemma}\label{lem:FourierTransformExponential}
		Let $\alpha>0$ and $f_{\alpha,1}:\R\to\R$ the function given by $f_{\alpha,1}(x)=e^{-\alpha|x|}$.Then $\widehat{f_{\alpha,1}}:\R\to\R$ is given by
		$$
		\widehat{f_{\alpha,1}}(s)=\frac{2\alpha}{\alpha^2+s^2},\quad\forall s\in\R.
		$$
	\end{lemma}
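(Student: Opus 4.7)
The plan is to compute the integral directly. By definition of the Fourier transform, and because $|x|=-x$ for $x<0$ while $|x|=x$ for $x\geq 0$, I would split the integral at the origin,
\[
\widehat{f}_{\alpha,1}(y) = \int_{-\infty}^{0} e^{(\alpha+iy)x}\,dx + \int_{0}^{\infty} e^{(-\alpha+iy)x}\,dx.
\]
Each of these two integrals converges absolutely since $\alpha>0$ controls the integrand at infinity. Direct antidifferentiation then yields $\frac{1}{\alpha+iy}$ for the first piece and $\frac{1}{\alpha-iy}$ for the second, so adding the two fractions over the common denominator $\alpha^2+y^2$ produces the claimed value $\frac{2\alpha}{\alpha^2+y^2}$.

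Equivalently, and perhaps cleaner to present, I would exploit the fact that $f_{\alpha,1}$ is even to rewrite $\widehat{f}_{\alpha,1}(y)=2\int_0^\infty e^{-\alpha x}\cos(xy)\,dx$ and then recognise the resulting integral as the classical cosine-Laplace transform, whose value $\frac{\alpha}{\alpha^2+y^2}$ can be derived by integration by parts twice (or by taking real parts of $\int_0^\infty e^{(-\alpha+iy)x}\,dx=\frac{1}{\alpha-iy}$). There is no real obstacle here: the identity is a standard textbook computation, and is recorded in this form only because it will be convenient when applying Lemma~\ref{lem:id.Ip} in the proof of Theorem~\ref{thm:K1-sec}.
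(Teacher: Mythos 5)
Your proposal is correct and, in its second formulation, coincides exactly with the paper's own proof: the authors also use the evenness of $f_{\alpha,1}$ to reduce to $2\int_0^\infty e^{-\alpha x}\cos(xy)\,dx$ and then integrate by parts twice to solve for that integral. Your first variant (splitting $\mathbb{R}$ at the origin and antidifferentiating the two complex exponentials) is an equally valid, marginally quicker route to the same elementary identity.
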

	
	\begin{proof}
		For every $s\in\R$ we have that
		\begin{eqnarray*}
			\widehat{f_{\alpha,1}}(s)&=&\int_{\R}f_{\alpha,1}(x)e^{ixs}dx=\int_{\R}e^{-\alpha|x|}\left(\cos(xs)+i\sin(xs)\right)dx=\int_{\R}e^{-\alpha|x|}\cos(xs)dx\cr
			&=&2\int_0^\infty e^{-\alpha x}\cos(xs)dx.
		\end{eqnarray*}
		Integrating by parts twice, we have
		\begin{eqnarray*}
			\int_0^\infty e^{-\alpha x}\cos(xs)dx&=&\frac{1}{\alpha}-\frac{s}{\alpha}\int_0^\infty e^{-\alpha x}\sin(xs)dx\cr
			&=&\frac{1}{\alpha}-\frac{s^2}{\alpha^2}\int_0^\infty e^{-\alpha x}\cos(xs)dx.
		\end{eqnarray*}
		Therefore,
		$$
		\left(1+\frac{s^2}{\alpha^2}\right)\int_0^\infty e^{-\alpha x}\cos(xs)dx=\frac{1}{\alpha}\Leftrightarrow\int_0^\infty e^{-\alpha x}\cos(xs)dx=\frac{1}{\alpha}\frac{1}{1+\frac{s^2}{\alpha^2}}=\frac{\alpha}{\alpha^2+s^2},
		$$
		and then
		\[
		\widehat{f_{\alpha,1}}(s)=2\int_0^\infty e^{-\alpha x}\cos(xs)dx=\frac{2\alpha}{\alpha^2+s^2}.\qedhere
		\]
	\end{proof}

The following lemma provides an estimate for the intermediate upper bound we  obtain for the volume of $K_1\cap H$.

\begin{lemma}\label{lem:Gamma-Karamata}
Let $(c_j,v_j)_{j=1}^m\subseteq (0,1]\times S^{n-1}$ providing a decomposition of the identity in $\R^n$. Let $H\in G_{n,k}$ be a $k$-dimensional linear subspace and let $J=\{1\leq j\leq m\,:\,P_Hv_j\neq 0\}$ and  $\tilde{J}=\{j\in J: \tilde c_j\neq 1\}$. Then
\[
\prod_{j\in \tilde{J}}\left(\frac{\Gamma\left(\frac{1}{1-\tilde{c}_j}-\frac{1}{2} \right)}{\sqrt{1-\tilde{c}_j}\Gamma\left(\frac{1}{1-\tilde{c}_j}\right)}\right)^{1-\tilde{c}_j}\leq\pi^{\frac{m_0-k}{2}}.
\]
\end{lemma}

\begin{proof}
Notice that the wanted inequality is equivalent to
		\begin{equation}\label{eq:EquivalentUpperBoundLemGammaKaramata}
		\sum_{j\in \tilde{J}}(1-\tilde{c}_j)\log\left(\frac{\Gamma\left(\frac{1}{1-\tilde{c}_j}-\frac{1}{2} \right)}{\sqrt{1-\tilde{c}_j}\Gamma\left(\frac{1}{1-\tilde{c}_j}\right)}\right)\leq(m_0-k)\log\Gamma\left(\frac{1}{2}\right).
		\end{equation}
		Let $f:[0,1]\to\R$ be the function given by
		$$
		f(x)=\begin{cases}x\log\left(\frac{\Gamma\left(\frac{1}{x}-\frac{1}{2} \right)}{\sqrt{x}\Gamma\left(\frac{1}{x}\right)}\right) &\textrm{ if } x\in(0,1],\\
\lim_{x\to0^+}f(x)=0&\textrm{ if } x=0.
\end{cases}
		$$
		We have that $f$ is continuous on $[0,1]$. Besides, for every $x\in (0,1)$ we have that
		$$
		f^{\prime\prime}(x)=-\frac{x^2-2\psi^\prime\left(\frac{1}{x}-\frac{1}{2}\right)+2\psi^\prime\left(\frac{1}{x}\right)}{2x^3},
		$$
		where $\psi$ denotes the logarithmic derivative of the Gamma function.
		
The convexity of $\psi'$ implies that
\[
2\left(\psi'\left(\frac{1}{x}-\frac{1}{2}\right)-\psi'\left(\frac{1}{x}\right)\right)\gr-\psi''\left(\frac{1}{x}\right)=2\sum_{k=0}^\infty\left(\frac{1}{\frac{1}{x}+k}\right)^3\gr 2\int_0^\infty \frac{dt}{(\frac{1}{x}+t)^3}=x^2,
\]
for any $x\in(0,1)$. This implies that $f''\gr 0$, hence $f$ is convex on $(0,1)$. Then the function $F:M\to\R$ given by
		$$
		F(x)=\sum_{j\in J}f(x_j),
		$$
		where
$$
M=\{(x_j)_{j\in J}\,:\,0\leq x_j\leq 1\,\forall j\in J,\,\sum_{j\in J}x_j=m_0-k\},$$
which is convex and invariant under permutations of coordinates. Besides, for every $(x_j)_{j\in J}\in M$ we have that
		$$
		(x_j)_{j\in J}\prec(1,\dots,1,0,\dots,0)=(y_j)_{j\in J},
		$$
		where there are $m_0-k$ 1's and $k$ 0's. That is, identifying $J$ with $\R^{m_0}$.
\begin{itemize}
	\item $\displaystyle{\sum_{j=1}^{m_0}x_j=m_0-k=\sum_{j=1}^{m_0}y_j}$ and
	\item For every $1\leq k\leq m_0$, we have $\displaystyle{\sum_{j=1}^kx_j^*\ls\sum_{j=1}^ky_j^*}$, where $(z_j^\ast)_{j=1}^{m_0}$ denotes the non-increasing rearrangement of $(z_j)_{j=1}^{m_0}$.
\end{itemize}


Therefore, by Karamata's inequality, for every $(x_j)_{j\in J}\in K$ we have
		$$
		\sum_{j\in J}f(x_j)\leq \sum_{j\in J}f(y_j)=(m_0-k)\Gamma\left(\frac{1}{2}\right).
		$$
		In particular, since $(1-\tilde{c}_j)_{j\in J}\in M$ and
		$$
		\sum_{j\in J}f(1-\tilde{c}_j)=\sum_{j\in \tilde{J}} f(1-\tilde{c}_j)=\sum_{j\in \tilde{J}}(1-\tilde{c}_j)\log\left(\frac{\Gamma\left(\frac{1}{1-\tilde{c}_j}-\frac{1}{2} \right)}{\sqrt{1-\tilde{c}_j}\Gamma\left(\frac{1}{1-\tilde{c}_j}\right)}\right),
		$$
		we obtain \eqref{eq:EquivalentUpperBoundLemGammaKaramata}.
\end{proof}

Now, we proceed to the proof of Theorem \ref{thm:K1-sec}. We follow the ideas in \cite{MP} and, using the explicit expression of the Fourier transform of $f_{\alpha,1}$, together with Lemma \ref{lem:VolumeSectionsK_p}.

\begin{proof}[Proof of Theorem \ref{thm:K1-sec}]

Combining Lemma \ref{lem:VolumeSectionsK_p} and Lemma \ref{lem:FourierTransformExponential}, we obtain

\begin{eqnarray*}
k!\vol_k(K_1\cap H)&=&\frac{1}{(2\pi)^{\,m_0-k}}\;
	\int_{H^\perp}\prod_{j\in J}\widehat{f_{\frac{\alpha_j}{\sqrt{c_j}},1}}\left(\langle y,x_j\rangle\right)dy\cr
&=&\frac{\prod_{j\in J}\frac{2\alpha_j}{\sqrt{c_j}}}{(2\pi)^{\,m_0-k}}\;
	\int_{H^\perp}\prod_{j\in J}\frac{1}{\frac{\alpha_j^2}{c_j}+\langle y,x_j\rangle^2}dy,
\end{eqnarray*}
where $H^\perp$ denotes the orthogonal subspace to $H$ in $\R^{m_0}$. Thus,
\begin{equation}\label{eq:IdentitySectionsK_1}
\vol_k(K_1\cap H)=\vol_k(B_1^k)\frac{\prod_{j\in J}\frac{\alpha_j}{\sqrt{c_j}}}{\pi^{\,m_0-k}}\;
	\int_{H^\perp}\prod_{j\in J}\frac{1}{\frac{\alpha_j^2}{c_j}+\langle y,x_j\rangle^2}dy.
\end{equation}
Let us first prove the lower bound: By the arithmetic-geometric mean inequality, we have that
		$$
		\prod_{j\in J}\left(\frac{\alpha_j^2}{c_j}+\langle y,x_j\rangle^2\right)^\frac{1}{m_0}\leq\frac{1}{m_0}\sum_{j\in J}\left(\frac{\alpha_j^2}{c_j}+\langle y,x_j\rangle^2\right)=\frac{1}{m_0}\sum_{j\in J}\frac{\alpha_j^2}{c_j}+\frac{\Vert y\Vert_2^2}{m_0}
		$$
Therefore, integrating in polar coordinates in $H^\perp$,
\begin{eqnarray*}
\int_{H^\perp}\prod_{j\in J}\frac{1}{\frac{\alpha_j^2}{c_j}+\langle y,x_j\rangle^2}dy&\geq&
	\int_{H^\perp}\frac{m_0^{m_0}}{\left(\sum_{j\in J}\frac{\alpha_j^2}{c_j}+\Vert y\Vert_2^2\right)^{m_0}}dy\cr
	&=& \frac{m_0^{m_0}(m_0-k)\pi^\frac{m_0-k}{2}}{\Gamma\left(1+\frac{m_0-k}{2}\right)}\int_0^\infty\frac{r^{m_0-k-1}}{\left(\sum_{j\in J}\frac{\alpha_j^2}{c_j}+r^2\right)^{m_0}}dr\cr
	&=&\frac{m_0^{m_0}(m_0-k)\pi^\frac{m_0-k}{2}}{\Gamma\left(1+\frac{m_0-k}{2}\right)\left(\sum_{j\in J}\frac{\alpha_j^2}{c_j}\right)^\frac{m_0+k}{2}}\int_0^\infty\frac{s^{\frac{m_0-k}{2}-1}}{2\left(1+s\right)^{m_0}}ds.\cr
\end{eqnarray*}
Taking into account that
\begin{eqnarray*}
\int_0^\infty\frac{s^{\frac{m_0-k}{2}-1}}{2\left(1+s\right)^{m_0}}ds&=&\frac{1}{2}\beta\left(\frac{m_0-k}{2},\frac{m_0+k}{2},\right)=\frac{\Gamma\left(\frac{m_0-k}{2}\right)\Gamma\left(\frac{m_02k}{2}\right)}{2\Gamma(m_0)}\cr
&=&\frac{\Gamma\left(1+\frac{m_0-k}{2}\right)\Gamma\left(\frac{m_0+k}{2}\right)}{(m_0-k)\Gamma(m_0)},
\end{eqnarray*}
we obtain
$$
\int_{H^\perp}\prod_{j\in J}\frac{1}{\frac{\alpha_j^2}{c_j}+\langle y,x_j\rangle^2}dy\geq\frac{m_0^{m_0}\Gamma\left(\frac{m_0+k}{2}\right)\pi^\frac{m_0-k}{2}}{\Gamma(m_0)\left(\sum_{j\in J}\frac{\alpha_j^2}{c_j}\right)^\frac{m_0+k}{2}}.
$$
which, together with \eqref{eq:IdentitySectionsK_1}, gives the lower bound for $\vol_k(K\cap H)$. Let us now proceed with the upper bound.

We first manipulate the term in the right-hand side of \eqref{eq:IdentitySectionsK_1} as follows,
\begin{eqnarray*}
\prod_{j\in J}\left(\frac{\alpha_j}{\sqrt{c_j}}\right)\int_{H^\perp}\prod_{j\in J}\frac{1}{\frac{\alpha_j^2}{c_j}+\langle y,x_j\rangle^2}dy&=&
\prod_{j\in J}\left(\frac{\sqrt{c_j}}{\alpha_j}\right)\int_{H^\perp}\prod_{j\in J}\frac{1}{1+\frac{c_j(1-\tilde{c}_j)}{\alpha_j^2}\langle y,w_j\rangle^2}dy\\
&=&\prod_{j\in J}\left(\frac{\sqrt{c_j}}{\alpha_j}\right)\int_{H^\perp}\prod_{j\in \tilde{J}}\frac{1}{1+\frac{c_j(1-\tilde{c}_j)}{\alpha_j^2}\langle y,w_j\rangle^2}dy,
\end{eqnarray*}
where $\tilde{J}=\{j\in J\,:\, \tilde{c}_j\neq1\}=\{j\in J\,:\, x_j\not\in H\}=\{j\in J\,:\,P_{H^\perp}x_j\neq0\}$. Since $(1-\tilde{c}_j,w_j)_{j\in \tilde{J}}$ provide a decomposition of the identity in $H^\perp$, applying the geometric Brascamp-Lieb inequality (Theorem \ref{thm:GeometricBrascampLieb}) in $H^\perp$ we obtain
\begin{align*}
\int_{H^\perp}\prod_{j\in \tilde{J}}\frac{1}{1+\frac{c_j(1-\tilde{c}_j)}{\alpha_j^2}\langle y,w_j\rangle^2}dy &\ls \prod_{j\in \tilde{J}}\left(\int_{\R}\frac{1}{\left(1+\frac{c_j(1-\tilde{c}_j)}{\alpha_j^2}x^2\right)^\frac{1}{1-\tilde{c}_j}}\,dx\right)^{1-\tilde{c}_j}\\
                                      &= \prod_{j\in \tilde{J}}\left(\frac{\alpha_j}{\sqrt{c_j(1-\tilde{c}_j})}\int_{\R}\frac{1}{\left(1+x^2\right)^\frac{1}{1-\tilde{c}_j}}\,dx\right)^{1-\tilde{c}_j}.
\end{align*}
Since
\[
\int_{\R}\frac{1}{\left(1+x^2\right)^\frac{1}{1-\tilde{c}_j}}\,dx =\beta\left(\frac{1}{2},\frac{1}{1-\tilde{c}_j}-\frac{1}{2}\right)= \frac{\sqrt{\pi}\Gamma\left(\frac{1}{1-\tilde{c}_j}-\frac{1}{2} \right)}{\Gamma\left(\frac{1}{1-\tilde{c}_j}\right)},
\]
we obtain
\begin{equation}
\prod_{j\in J}\left(\frac{\alpha_j}{\sqrt{c_j}}\right)\int_{H^\perp}\prod_{j\in J}\frac{1}{\frac{\alpha_j^2}{c_j}+\langle y,x_j\rangle^2}dy\ls \prod_{j\in J}\left(\frac{\sqrt{c_j}}{\alpha_j}\right)^{\tilde{c}_j}\prod_{j\in J_1}\left(\frac{\Gamma\left(\frac{1}{1-\tilde{c}_j}-\frac{1}{2} \right)}{\sqrt{1-\tilde{c}_j}\Gamma\left(\frac{1}{1-\tilde{c}_j}\right)}\right)^{1-\tilde{c}_j}.
\end{equation}
Taking into account \eqref{eq:IdentitySectionsK_1},
\begin{equation}\label{eq:I1-upper-intermediate}
\vol_k(K_1\cap H)\leq  \vol_k(B_1^k)\frac{\prod_{j\in J}\left(\frac{\sqrt{c_j}}{\alpha_j}\right)^{\tilde{c}_j}}{\pi^{\frac{m_0-k}{2}}}\prod_{j\in J_1}\left(\frac{\Gamma\left(\frac{1}{1-\tilde{c}_j}-\frac{1}{2} \right)}{\sqrt{1-\tilde{c}_j}\Gamma\left(\frac{1}{1-\tilde{c}_j}\right)}\right)^{1-\tilde{c}_j}.
\end{equation}
Lemma \ref{lem:Gamma-Karamata} concludes the proof.
\end{proof}

We stress that the method of the proof provides the intermediate bound that improves upon the general estimate, by means of an extra term taking into account the lengths of the projections of the vectors $v_j$ onto the complementary subspace $H^\perp$. In the case that $m=n$, $v_j=e_j$, $c_j=1$, and $(\alpha_j)_{j=1}^n$ for every $1\leq j\leq n$, then $K_1=B_1^n$ and, in view of \eqref{eq:I1-upper-intermediate} we obtain the following:
	\begin{corollary}\label{cor:B1n-bound}
		For any $H\in G_{n,k}$ such that $e_j\not\in H\cup H^\perp$ and for every $1\leq j\leq n$, we have that
		$$
		\vol_k(B_1^n\cap H)\leq\vol_k(B_1^k)\frac{1}{\pi^\frac{n-k}{2}}\prod_{j=1}^n\left(\frac{\Gamma\left(\frac{1}{\Vert P_{H^\perp}e_j\Vert_2^2}-\frac{1}{2} \right)}{\Vert P_{H^\perp}e_j\Vert_2\Gamma\left(\frac{1}{\Vert P_{H^\perp}e_j\Vert_2^2}\right)}\right)^{\Vert P_{H^\perp}e_j\Vert_2^2}\leq\vol_k(B_1^k).
		$$
	\end{corollary}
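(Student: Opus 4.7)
The plan is to derive the corollary by specializing the intermediate bound \eqref{eq:I1-upper-intermediate} to the standard $\ell_1^n$ ball. For $K_1 = B_1^n$, the relevant John decomposition is simply $v_j = e_j$, $c_j = 1$ for $1 \leq j \leq n$, and we take weights $\alpha_j = 1$. Under the assumption $e_j \notin H \cup H^\perp$ for every $j$, we have $P_H e_j \neq 0$ and $P_{H^\perp} e_j \neq 0$, so in the notation of Section \ref{sec:Kp}, $J = J_1 = [n]$ and $m_0 = n$. Moreover, $\tilde{c}_j = c_j \|P_H e_j\|_2^2 = \|P_H e_j\|_2^2$, so $1 - \tilde{c}_j = \|P_{H^\perp} e_j\|_2^2$, and the factor $\sqrt{c_j}/\alpha_j$ equals $1$ for every $j$.

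Plugging these values into \eqref{eq:I1-upper-intermediate} yields
\[
I_1 \leq \frac{2^k}{\pi^{(n-k)/2}} \prod_{j=1}^n \left( \frac{\Gamma\bigl(\|P_{H^\perp}e_j\|_2^{-2} - \tfrac{1}{2}\bigr)}{\|P_{H^\perp}e_j\|_2\, \Gamma\bigl(\|P_{H^\perp}e_j\|_2^{-2}\bigr)} \right)^{\|P_{H^\perp}e_j\|_2^2}.
\]
To convert this into a bound on $\vol_k(B_1^n \cap H)$, I invoke \eqref{eq:Kp-Ip} with $p = 1$, which gives $\vol_k(B_1^n \cap H) = I_1 / \Gamma(k+1) = I_1 / k!$. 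Combined with the classical formula $\vol_k(B_1^k) = 2^k / k!$, this turns the prefactor $2^k$ above into $k!\,\vol_k(B_1^k)$, and after dividing both sides by $k!$ I obtain the first inequality of the corollary.

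The second inequality is then a direct application of Lemma \ref{lem:Gamma-Karamata}, specialised to $J_1 = [n]$, $m_0 = n$, and $1 - \tilde{c}_j = \|P_{H^\perp}e_j\|_2^2$. That lemma says exactly that
\[
\prod_{j=1}^n \left( \frac{\Gamma\bigl(\|P_{H^\perp}e_j\|_2^{-2} - \tfrac{1}{2}\bigr)}{\|P_{H^\perp}e_j\|_2\, \Gamma\bigl(\|P_{H^\perp}e_j\|_2^{-2}\bigr)} \right)^{\|P_{H^\perp}e_j\|_2^2} \leq \pi^{(n-k)/2},
\]
so the prefactor multiplying $\vol_k(B_1^k)$ is at most $1$.

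Since both ingredients are already available, no genuine obstacle remains; the only bookkeeping to carry out carefully is the verification that the hypothesis $e_j \notin H \cup H^\perp$ puts us in the regime where every index $j$ contributes to both $J$ and $J_1$, so that the product in \eqref{eq:I1-upper-intermediate} is indeed taken over all of $[n]$ and the exponents $1 - \tilde{c}_j = \|P_{H^\perp}e_j\|_2^2$ are strictly positive.
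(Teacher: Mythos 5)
Your proposal is correct and matches the paper's argument exactly: the paper itself presents this corollary as "merely a consequence of \eqref{eq:I1-upper-intermediate} for the case $K_1=B_1^n$", which is precisely the specialization $v_j=e_j$, $c_j=\alpha_j=1$, $J=J_1=[n]$, $1-\tilde c_j=\Vert P_{H^\perp}e_j\Vert_2^2$ that you carry out, with the normalisation via \eqref{eq:Kp-Ip} and the final inequality from Lemma \ref{lem:Gamma-Karamata}. Nothing is missing.
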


\subsection{Sections of $K_p$, $1\leq p\leq2$}
In this section we will provide estimates for the volume of  $K_p\cap H$ whenever $p\in(1,2]$ and $(c_j,v_j)_{j=1}^m\subseteq (0,1]\times S^{n-1}$ provide a decomposition of the identity in $\R^n$ and $H$ is a $k$-dimensional linear subspace. Our result reads as follows.
\begin{theorem}\label{thm:general_p_bounds}
Let $1\leq p\leq 2$, $(c_j,v_j)_{j=1}^m\subseteq (0,1]\times S^{n-1}$ providing a decomposition of the identity in $\R^n$ and let $(\alpha_j)_{j=1}^m\in(0,\infty)$. Then, for any  $H\in G_{n,k}\subseteq G_{m_0,k}$ we have
$$
\vol_k(K_p\cap H)\geq \frac{\vol_k(B_p^k)(\pi(m_0-k))^{\frac{m_0-k}{2}}}{\alpha_p^k(2\pi)^{m_0-k}}\prod_{j\in J}\left(\frac{\sqrt{c_j}}{\alpha_j^\frac{1}{p}}\right)\frac{1}{\Gamma(\beta)} \int_0^\infty t^{\beta-1} \prod_{j\in \tilde{J}}\gamma_p\left(\sqrt{t \tfrac{c_j}{\alpha_j^{2/p}}(1-\tilde{c}_j)}\right)  dt.
$$
where $\beta=(m_0-k)/2$. On the other hand,
\begin{eqnarray*}
\vol_k(K_p\cap H)&\leq&\frac{\vol_k(B_p^k)\prod_{j\in J}\left(\frac{c_j^\frac{p}{2}}{\alpha_j}\right)^{\frac{c_j\Vert P_Hv_j\Vert_2^2}{p}}}{\alpha_p^k(2\pi)^{m_0-k}}
	\prod_{j\in\tilde{J}}\left(\int_\R \gamma_p^\frac{1}{1-\tilde{c}_j}\left(\sqrt{1-\tilde{c}_j}t\right)dt\right)^{1-\tilde{c}_j}\cr
&\leq&\vol_k(B_p^k)\prod_{j\in J}\left(\frac{c_j^\frac{p}{2}}{\alpha_j}\right)^{\frac{c_j\Vert P_Hv_j\Vert_2^2}{p}},
\end{eqnarray*}
where, $J=\{1\leq j\leq m\,:\, P_Hv_j\neq0\}$, $\tilde{J}=\{j\in J\,:\,\tilde{c}_j\neq 1\}$ being, for every $j\in J$, $\tilde{c}_j=c_j\Vert P_Hv_j\Vert_2^2$.
\end{theorem}

\begin{remark}
Let us point out that, if $p\neq1$ we cannot explicitly compute the function $\gamma_p$. Besides, since for every $1\leq p\leq 2$ we have that
$c_j^{1-\frac{p}{2}}\Vert P_Hv_j\Vert_2^{2-p}\leq 1$ for every $j\in J$, the upper estimate provided by \eqref{eq:UpperBoundSectiosKp} is tighter than the largest upper bound given in Theorem \ref{thm:general_p_bounds}. Nevertheless, since the immediate term is obtained following the method introduced in Section \ref{sec:Parseval}, the upper estimate given by the intermediate term, will improve on the one given by \eqref{eq:UpperBoundSectiosKp} whenever $\tilde{c}_j\geq\frac{1}{2}$ for every $j\in J$.
\end{remark}

In order to study the upper bound in Theorem \ref{thm:general_p_bounds} we will need the following lemma about the Fourier transform, $\gamma_p$, of the function $e^{-|x|^p}$.
\begin{lemma}\label{gamma_p}
Let \(0<p<2\) and \(f(x)=e^{-|x|^{p}}\). Let
\[
	\gamma_p(s):=\int_{\mathbb R}e^{ixs}f(x)dx.
	\]
Then, for all \(r\ge 1\) we have
	\[
	\int_{\mathbb R}\gamma_p\!\left(\frac{t}{\sqrt{r}}\right)^{\!r}\,dt
	\le 2\pi\left(2\Gamma\left(1+\tfrac{1}{p}\right)\right)^{r-1}.
	\]
\end{lemma}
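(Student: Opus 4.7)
I would split into two overlapping ranges, $1\le s\le 2$ and $s\ge 2$, and apply a different tool in each. The Gaussian case $p=2$ saturates the inequality at every $s$, so both arguments should produce equality when $p=2$.

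\textbf{Range $1\le s\le 2$: subordination and Jensen.} For $p\in(0,2)$, the function $\lambda\mapsto e^{-\lambda^{p/2}}$ is completely monotone on $[0,\infty)$, so Bernstein's theorem (equivalently, the existence of a positive $(p/2)$-stable law) provides a probability measure $\mu_p$ on $(0,\infty)$ with
\[
e^{-|x|^p}=\int_0^{\infty}e^{-tx^2}\,d\mu_p(t).
\]
Taking the Fourier transform inside the integral represents $\gamma_p(y)$ as an average of Gaussians, $\gamma_p(y)=\int_0^{\infty}\sqrt{\pi/t}\,e^{-y^2/(4t)}\,d\mu_p(t)$. The key step would be Jensen's inequality applied to the convex function $u\mapsto u^s$ against the probability measure $\mu_p$, which yields the pointwise bound $\gamma_p(y)^s\le\int(\pi/t)^{s/2}e^{-sy^2/(4t)}\,d\mu_p(t)$. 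Integrating in $y$ and evaluating the resulting Gaussian integrals one obtains
\[
\int_{\R}\gamma_p(t/\sqrt{s})^s\,dt\le 2\pi^{(s+1)/2}\,\E[T^{-(s-1)/2}],\qquad T\sim\mu_p.
\]
Using the identity $\sqrt{\pi}\,\E[T^{-1/2}]=2\Gamma(1+1/p)$ (which follows from the Laplace representation of negative moments of $T$), the target reduces to $\E[Y^{s-1}]\le\E[Y]^{s-1}$ for $Y=T^{-1/2}$. This is Jensen's inequality applied to the \emph{concave} function $y\mapsto y^{s-1}$ on $(0,\infty)$, valid precisely when $s-1\in[0,1]$.

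\textbf{Range $s\ge 2$: Beckner's sharp Hausdorff--Young inequality.} Here I would invoke Beckner's inequality~\cite{Be}: with $q=s/(s-1)\in[1,2]$ and the Fourier convention of this paper,
\[
\|\gamma_p\|_s=\|\widehat{e^{-|\cdot|^p}}\|_s\le(2\pi)^{1/s}\,A_q\,\|e^{-|\cdot|^p}\|_q,\qquad A_q=\bigl(q^{1/q}/s^{1/s}\bigr)^{1/2}.
\]
A standard change of variables gives $\|e^{-|\cdot|^p}\|_q^s=(s/(s-1))^{-(s-1)/p}\,\gamma_p(0)^{s-1}$, and the Beckner constant satisfies $\sqrt{s}\,A_q^s=(s/(s-1))^{(s-1)/2}$. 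Substituting these and collecting terms, the desired bound collapses to
\[
\bigl(s/(s-1)\bigr)^{(s-1)(1/2-1/p)}\le 1,
\]
which is clear for $p\le 2$ since $s/(s-1)\ge 1$ and $1/2-1/p\le 0$.

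\textbf{Main obstacle.} Neither tool alone covers the whole range: the Beckner argument requires the primal exponent $q=s/(s-1)\le 2$ and so fails for $s\in[1,2)$, while the subordination/Jensen argument rests on the concavity of $y\mapsto y^{s-1}$ and fails for $s>2$. The split at $s=2$ is therefore essential, and the most delicate point will be the sharp-constant algebra in the Beckner step together with the identification and negative-moment computation for the positive stable measure $\mu_p$. A welcome consistency check is that the Gaussian case $p=2$ saturates both regimes, giving equality throughout.
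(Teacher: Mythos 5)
Your proof is correct, but it takes a genuinely different and more elaborate route than the paper's. The paper uses a single argument valid for all $s\ge 1$: it writes $\gamma_p(\xi)=\int_0^\infty e^{-\xi^2 v}\,d\mu(v)$ with $\mu((0,\infty))=\gamma_p(0)=A_p:=2\Gamma(1+\tfrac1p)$, normalises $\nu=\mu/A_p$ to a probability measure, and applies Jensen to the convex map $u\mapsto u^s$ with $g(v)=e^{-\xi^2 v/s}$, so that $g^s=e^{-\xi^2 v}$ exactly; this yields the pointwise inequality $\gamma_p(t/\sqrt{s})^s\le A_p^{s-1}\gamma_p(t)$, and integrating with the inversion identity $\int_{\R}\gamma_p=2\pi f(0)=2\pi$ finishes the proof. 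Your first argument is the same subordination idea, but because you apply Jensen to the full Gaussian kernel $\sqrt{\pi/t}\,e^{-y^2/(4t)}$ (mixture of $f$ rather than of $\gamma_p$, without first absorbing the $\sqrt{s}$-rescaling), you pick up the prefactor $(\pi/t)^{s/2}$, which forces the negative-moment computation and a second Jensen step with the \emph{concave} power $s-1$, hence the restriction $s\le 2$; the paper's placement of the rescaling makes this restriction disappear. Your Beckner argument for $s\ge 2$ is also correct (the constant bookkeeping checks out, and the reduction to $(s/(s-1))^{(s-1)(1/2-1/p)}\le 1$ is valid for $p\le 2$), and it is a legitimate alternative that moreover isolates where sharpness for $p=2$ comes from; but it imports a much deeper tool than needed. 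In short: nothing is wrong, but the case split at $s=2$ is avoidable.
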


\begin{proof}
It is well known (see, for instance, \cite[Theorem 2.]{ENT}) that there is a positive measure \(\mu\) on \([0,\infty)\) such that for every $s\in\R$
	\[
	\gamma_p(s)=\int_{0}^{\infty}e^{-s^{2}t}\,d\mu(t),
	\qquad \mu([0,\infty))=\gamma_p(0)=\int_{\mathbb R}f(x)\,dx=2\Gamma\left(1+\frac{1}{p}\right)=\alpha_p.
	\]
Let $\nu$ be the probability measure on $[0,\infty)$ given by \(\nu=\frac{1}{\alpha_p}\mu\) and let us write  \(\gamma_p(s)=\alpha_p\int_{0}^{\infty}e^{-s^{2}t}\,d\nu(t)\). An application of H\"{o}lder's inequality gives that for any $r\geq1$
	\[
	\left(\int_{0}^{\infty}e^{-s^{2}t/r}\,d\nu(t)\right)^{r}\le\int_{0}^{\infty}e^{-s^{2}t}\,d\nu(t).
	\]
Multiplying by \(\alpha_p^{r}\) and integrating with respect to  \(s\in\mathbb R\) we get
	\[
	\int_{\mathbb R}\gamma_p\left(\frac{s}{\sqrt{r}}\right)^{r}\,ds
	\le \alpha_p^{r-1}\int_{\mathbb R}\gamma_p(s)\,ds.
	\]
To this end, note that by \eqref{eq:FourierInversionAt0}
	\[
	f(0)=\frac{1}{2\pi}\int_{\mathbb R}\gamma_p(s)\,ds.
	\]
Therefore \(\displaystyle\int_{\mathbb R}\gamma_p(s)\,ds=2\pi f(0)=2\pi\) and the lemma follows.
\end{proof}
Let us now prove Theorem \ref{thm:general_p_bounds}.

\begin{proof}[Proof of Theorem \ref{thm:general_p_bounds}]

Let us first prove the lower bound for $\vol_k(K_p\cap H)$. Fix $1\leq p\leq 2$. Starting from Lemma \ref{lem:VolumeSectionsK_p}, and taking into account \eqref{eq:FourierTransformGammapScaled}, we obtain

\begin{eqnarray}\label{eq:ChangeIntegralK_p}
\Gamma\left(1+\frac{k}{p}\right)\vol_k(K_p\cap H)&=&\frac{1}{(2\pi)^{m_0-k}}
	\int_{H^\perp}\prod_{j\in J}\widehat{f_{\frac{\alpha_j}{c_j^{p/2}},p}}\left(\langle y,x_j\rangle\right)dy\cr
&=&\frac{1}{(2\pi)^{m_0-k}}\prod_{j\in J\setminus\tilde{J}}\widehat{f_{\frac{\alpha_j}{c_j^{p/2}},p}}(0)
	\int_{H^\perp}\prod_{j\in \tilde{J}}\widehat{f_{\frac{\alpha_j}{c_j^{p/2}},p}}\left(\sqrt{1-\tilde{c}_j}\langle y,w_j\rangle\right)dy\cr
&=&\frac{1}{(2\pi)^{m_0-k}}\prod_{j\in J\setminus\tilde{J}}\left(\frac{\sqrt{c}_j}{\alpha_j^\frac{1}{p}}\right)
	\int_{H^\perp}\prod_{j\in \tilde{J}}\frac{\sqrt{c_j}}{\alpha_j^\frac{1}{p}}\gamma_p\left(\frac{\sqrt{c_j}\sqrt{1-\tilde{c}_j}\langle y,w_j\rangle}{\alpha_j^\frac{1}{p}}\right)dy\cr
&=&\frac{1}{(2\pi)^{m_0-k}}\prod_{j\in J}\left(\frac{\sqrt{c_j}}{\alpha_j^\frac{1}{p}}\right)
	\int_{H^\perp}\prod_{j\in \tilde{J}}\gamma_p\left(\frac{\sqrt{c_j}\sqrt{1-\tilde{c}_j}\langle y,w_j\rangle}{\alpha_j^\frac{1}{p}}\right)dy.\cr
\end{eqnarray}

Let $\mu$ be the positive measure on $[0,\infty)$ such that
\begin{equation}\label{eq:GaussianMixture}
	\gamma_p(s)=\int_0^\infty e^{-s^2 t}\,d\mu(t),\qquad \forall s\in\R .
\end{equation}
	and recall that $\mu([0,\infty))=\alpha_p$.

Hence, calling $m_1=\sharp \tilde{J}$ the product inside the integral equals
	\[
	\prod_{j\in \tilde{J}}\gamma_p\left(\frac{\sqrt{c_j}}{\alpha_j^{1/p}}\langle y,x_j\rangle\right)
	= \int_{[0,\infty)^{m_1}} \exp\left(-\sum_{j\in \tilde{J}} \tfrac{c_j}{\alpha_j^{2/p}} t_j \langle y,x_j\rangle^2\right)\,d\mu^{\otimes m_1}(t).
	\]
Using Fubini's theorem we obtain
	\[
\Gamma\left(1+\frac{k}{p}\right)\vol_k(K_p\cap H)=\frac{1}{(2\pi)^{m_0-k}}\prod_{j\in J}\left(\frac{\sqrt{c_j}}{\alpha_j^\frac{1}{p}}\right)
	\int_{[0,\infty)^{m_1}}\left(\int_{H^\perp} e^{-y^T S(t) y}\,dy\right)\,d\mu^{\otimes m_1}(t),
	\]
	where, for every $t\in[0,\infty)^{m_1}$, $S(t)$ is the symmetric positive operator on $H^\perp$ given by
	\[
	S(t)=\sum_{j\in \tilde{J}}\tfrac{c_j}{\alpha_j^{2/p}} t_j\,(x_j\otimes x_j)\Big|_{H^\perp}.
	\]
	Computing  the Gaussian integral and using the arithmetic-geometric mean inequality,
$$
\int_{H^\perp} e^{-y^T S(t) y}dy=\frac{\pi^\frac{m_0-k}{2}}{\det(S(t))^{1/2}}\geq \left(\frac{\pi(m_0-k)}{\textrm{tr}(S(t))}\right)^\frac{m_0-k}{2}.
$$

	Since for every $j\in\tilde{J}$  we have $\textrm{tr}(x_j\otimes x_j|_{H^\perp})=\|P_{H^\perp} x_j\|_2^2=1-\tilde c_j$, we have that for every $t\in[0,\infty)^{m_1}$
	\[
	\textrm{tr}(S(t))=\sum_{j\in \tilde{J}}\tfrac{c_j}{\alpha_j^{2/p}} (1-\tilde c_j)t_j,
	\]
and then
$$
\Gamma\left(1+\frac{k}{p}\right)\vol_k(K_p\cap H)\geq \frac{(\pi(m_0-k))^{\frac{m_0-k}{2}}}{(2\pi)^{m_0-k}}\prod_{j\in J}\left(\frac{\sqrt{c_j}}{\alpha_j^\frac{1}{p}}\right)\int_{[0,\infty)^{m_1}}\left(\textrm{tr}(S(t))\right)^{-\frac{m_0-k}{2}}d\mu(t)
$$
To obtain the desired bound we will use the formula:
\[
x^{-\beta} = \frac{1}{\Gamma(\beta)} \int_{0}^{\infty} r^{\beta-1} e^{-rx}dr, \qquad \text{for } x>0, \beta>0,
\]
with $\beta=(m_0-k)/2$ and ,
\[
x = \textrm{tr}(S(t)) = \sum_{j\in \tilde{J}}\tfrac{c_j}{\alpha_j^{2/p}} (1-\tilde c_j)t_j
\]
Substituting this expression in the integral over the variables $(t_j)_{j\in \tilde{J}}$ and applying Fubini's theorem, we get:
\begin{align*}
	\int_{[0,\infty)^{m_1}} & ( \textrm{tr}(S(t)))^{-\beta} \,d\mu^{\otimes m_1}(t)= \\
	&= \frac{1}{\Gamma(\beta)} \int_0^\infty r^{\beta-1} \left( \int_{[0,\infty)^{m_1}} \exp\left(-r \sum_{j\in \tilde{J}}\tfrac{c_j}{\alpha_j^{2/p}} (1-\tilde c_j) t_j\right) d\mu^{\otimes m_1}(t) \right) dr \\
	&= \frac{1}{\Gamma(\beta)} \int_0^\infty r^{\beta-1} \prod_{j\in \tilde{J}} \left( \int_0^\infty \exp\left(-r \Big[\tfrac{c_j}{\alpha_j^{2/p}} (1-\tilde c_j)\Big] t_j\right) d\mu(t_j) \right) dr.
\end{align*}
From \eqref{eq:GaussianMixture}, we have that for each $j\in\tilde{J}$,
$$
\int_0^\infty \exp\left(-r \Big[\tfrac{c_j}{\alpha_j^{2/p}} (1-\tilde c_j)\Big] t_j\right) d\mu(t_j)=\gamma_p\left(\sqrt{r \tfrac{c_j}{\alpha_j^{2/p}}(1-\tilde{c}_j)}\right).
$$
Therefore,
$$
\int_{[0,\infty)^{m_1}} ( \textrm{tr}(S(t)))^{-\beta} \,d\mu^{\otimes m_1}(t)=  \frac{1}{\Gamma(\beta)} \int_0^\infty r^{\beta-1} \prod_{j\in \tilde{J}}\gamma_p\left(\sqrt{r \tfrac{c_j}{\alpha_j^{2/p}}(1-\tilde{c}_j)}\right)  dr
$$
and then
$$
\Gamma\left(1+\frac{k}{p}\right)\vol_k(K_p\cap H)\geq \frac{(\pi(m_0-k))^{\frac{m_0-k}{2}}}{(2\pi)^{m_0-k}}\prod_{j\in J}\left(\frac{\sqrt{c_j}}{\alpha_j^\frac{1}{p}}\right)\frac{1}{\Gamma(\beta)} \int_0^\infty r^{\beta-1} \prod_{j\in \tilde{J}}\gamma_p\left(\sqrt{r \tfrac{c_j}{\alpha_j^{2/p}}(1-\tilde{c}_j)}\right)  dr.
$$

Let us now prove the upper bound for $\vol_k(K_p\cap H)$.

Starting from equation \eqref{eq:ChangeIntegralK_p} and applying the geometric Brascamp--Lieb inequality (Theorem \ref{thm:GeometricBrascampLieb}) in $H^\perp$, and taking into account that if $j\in J\setminus\tilde{J}$, then $\tilde{c}_j=1$ and necessarily $c_j=1$,
\begin{eqnarray*}
\Gamma\left(1+\frac{k}{p}\right)\vol_k(K_p\cap H)&\leq&\frac{1}{(2\pi)^{m_0-k}}\prod_{j\in J}\left(\frac{\sqrt{c_j}}{\alpha_j^\frac{1}{p}}\right)
	\prod_{j\in\tilde{J}}\left(\int_\R \gamma_p^\frac{1}{1-\tilde{c}_j}\left(\frac{\sqrt{c_j}\sqrt{1-\tilde{c}_j}t}{\alpha_j^\frac{1}{p}}\right)dt\right)^{1-\tilde{c}_j}\cr
&=&\frac{1}{(2\pi)^{m_0-k}}\prod_{j\in J}\left(\frac{\sqrt{c_j}}{\alpha_j^\frac{1}{p}}\right)
	\prod_{j\in\tilde{J}}\left(\frac{\alpha_j^\frac{1}{p}}{\sqrt{c_j}}\right)^{1-\tilde{c}_j}\left(\int_\R \gamma_p^\frac{1}{1-\tilde{c}_j}\left(\sqrt{1-\tilde{c}_j}t\right)dt\right)^{1-\tilde{c}_j}\cr
&=&\frac{\prod_{j\in J}\left(\frac{\sqrt{c_j}}{\alpha_j^\frac{1}{p}}\right)^{\tilde{c}_j}}{(2\pi)^{m_0-k}}
	\prod_{j\in\tilde{J}}\left(\int_\R \gamma_p^\frac{1}{1-\tilde{c}_j}\left(\sqrt{1-\tilde{c}_j}t\right)dt\right)^{1-\tilde{c}_j}.\cr
\end{eqnarray*}
Therefore, since $\alpha_p=2\Gamma\left(1+\frac{1}{p}\right)$, we have
$$
\vol_k(K_p\cap H)\leq\vol_k(B_p^k)\frac{\prod_{j\in J}\left(\frac{\sqrt{c_j}}{\alpha_j^\frac{1}{p}}\right)^{\tilde{c}_j}}{\alpha_p^k(2\pi)^{m_0-k}}
	\prod_{j\in\tilde{J}}\left(\int_\R \gamma_p^\frac{1}{1-\tilde{c}_j}\left(\sqrt{1-\tilde{c}_j}t\right)dt\right)^{1-\tilde{c}_j}.
$$
By Lemma \ref{gamma_p}  we have that, for every $j\in \tilde{J}$,
	\[
	\left(\int_\R \gamma_p^\frac{1}{1-\tilde{c}_j}\left(\sqrt{1-\tilde{c}_j}t\right)dt\right)^{1-\tilde{c}_j}
	\le (2\pi)^{1-\tilde c_j}\alpha_p^{\tilde c_j}.
	\]
 Ultimately, using the fact that $\sum_{j\in \tilde{J}}(1-\tilde c_j)=m_0-k$, that $\alpha_p\gr1$ and that  $\sum_{j\in \tilde{J}}\tilde{c}_j\ls \sum_{j\in J}\tilde c_j=k$, we obtain
$$
\vol_k(K_p\cap H)\leq\vol_k(B_p^k)\prod_{j\in J}\left(\frac{\sqrt{c_j}}{\alpha_j^\frac{1}{p}}\right)^{\tilde{c}_j},
$$
which completes the proof.
\end{proof}


\begin{remark}
We should remark that the proof of Theorem \ref{thm:general_p_bounds} does not provide an explicit intermediate bound in the likes of \eqref{eq:I1-upper-intermediate}, and thus we do not have a result similar to Corollary \ref{cor:B1n-bound} for $p>1$. This is a natural consequence of the fact that in contrast to the case $p=1$, we cannot explicitly compute $\gamma_p$ for general $p>1$.
\end{remark}


{\bf Acknowledgements.} The  authors acknowledge support by the Hellenic Foundation for Research and Innovation (H.F.R.I.) under the call “Basic research Financing (Horizontal support of all Sciences)” under the National Recovery and Resilience Plan “Greece 2.0” funded by the European Union–NextGeneration EU (H.F.R.I. Project Number:15445). The first named author is also supported by Project PID2022-137294NB-I00  funded by   MICIU/AEI/10.13039/501100011033/.\\ The authors would also like to thank Alexandros Eskenazis for many helpful discussions.

\appendix\label{Appendix}

\section{Parseval's identity for characteristic functions}

We include here a proof of the following statement, that justifies the validity of Theorem \ref{thm:parseval} for the case of characteristic functions on bounded intervals. We begin with the following definition:

\begin{defn}[Brascamp--Lieb feasibility on $H^\perp$]
\label{def:BL-feasible}
 Let $m\in\mathbb{N}$, $n_1,\ldots,n_m\in\mathbb{N}$ and set $N=n_1+\ldots+n_m$. Let $H\in G_{N,k}$  be a $k$-dimensional linear subspace in $\mathbb{R}^N=\bigoplus_{j=1}^m \R^{n_j}$. Let $d=\dim(H^\perp)$, $J_0\subseteq\{1,\dots, m\}$ and, for each $j\in J_0$, let
\[
A_j:H^\perp\to \R^{n_j}
\]
be linear maps from $H^\perp$ to the corresponding $\R^{n_j}$. Let $(p_j)_{j\in J_0}\subseteq[1,\infty]$. We say that the
Brascamp--Lieb datum $(H^\perp;(A_j)_{j\in J_0};(p_j)_{j\in J_0})$ is
\emph{feasible} if
\begin{enumerate}
\item[(i)]
\[
d=\sum_{j\in J_0}\frac{n_j}{p_j},
\]
(with the convention $n_j/\infty=0$), and
\item[(ii)]  for every linear subspace
$V\subseteq H^\perp$,
\[
\dim V \le \sum_{j\in J_0}\frac{\dim(A_jV)}{p_j}.
\]
\end{enumerate}
\end{defn}

\begin{remark}\label{rem:BL-ineq}
It was proved in \cite[Theorem 1.13]{BCCT} that if the datum in Definition~\ref{def:BL-feasible} is feasible, then there exists
a finite constant $C_{\mathrm{BL}}=C_{\mathrm{BL}}(H^\perp;(A_j)_{j\in J_0;(p_j)_{j\in J_0}})<\infty$
such that for every measurable functions $f_j:\R^{n_j}\to[0,\infty)$,
\[
\int_{H^\perp}\prod_{j\in J_0} f_j(A_j z)\,dz
\le
C_{\mathrm{BL}}\prod_{j\in J_0}\|f_j\|_{L^{p_j}(\R^{n_j})}.
\]
\end{remark}

\begin{theorem}
\label{thm:parseval-chi}
Let $m\in\mathbb{N}$, $n_1,\ldots,n_m\in\mathbb{N}$ and set $N=n_1+\ldots+n_m$. Let $H\in G_{N,k}$  be a $k$-dimensional linear subspace in $\mathbb{R}^N=\bigoplus_{j=1}^m \R^{n_j}$ and let $I_j\subseteq\mathbb{R}^{n_j}$ be a bounded measurable set for every $1\ls j\ls m$. If $G(z):=\prod_{j=1}^m \left|\widehat{\mathds{1}_{I_j}}(P_{\R^{n_j}}z)\right|$ is integrable in $H^\perp$ then
\begin{equation}\label{eq:parseval-chi}
\int_H \prod_{j=1}^m \mathds{1}_{I_j}(P_{\R^{n_j}}y)dy
=\frac{1}{(2\pi)^{N-n}}\int_{H^\perp}\prod_{j=1}^m \widehat{\mathds{1}_{I_j}}(P_{\R^{n_j}}z)dz
\end{equation}
\end{theorem}

\begin{proof}
We proceed by approximating characteristic functions by functions in the Schwartz space. Let \(\phi_j\in\mathcal S(\R^{n_j})\) be a mollifier for any $1\le j\le m$. That is $\phi_j\in\mathcal{C}^{\infty)}(\R^{n_j})$ is compactly supported with $\displaystyle{\int_{\mathbb{R}^{n_j}}\phi_j=1}$ that satisfies that $\displaystyle{\lim_{\varepsilon\to0^+}\varepsilon^{-n_j}\phi_j\left(\frac{\cdot}{\varepsilon}\right)}=\delta_0$ in the space of Schartz distributions.

 Let, for any $k\in\mathbb{N}$, \(\phi_{j,k}(x)=k^{n_j}\phi_j(kx)\) for any $x\in\mathbb{R}^n$ and \(g_{j,k}=\mathds{1}_{I_j}*\phi_{j,k}\). Then \(g_{j,k}\in\mathcal S(\R^{n_j})\), \(\displaystyle{\lim_{k\to\infty}g_{j,k}=\mathds{1}_{I_j}}\) a.e. and in \(L^1(\R^{n_j})\). Besides, \(\|g_{j,k}\|_\infty\le 1\). Applying Theorem \ref{thm:parseval} to the functions \(g_{j,k}\) we have that for every $1\le j\le m$ and every $k\in\mathbb{N}$,
\[
\int_H \prod_{j=1}^m g_{j,k}(P_{\R^{n_j}}y)dy
=\frac{1}{(2\pi)^{N-k}} \int_{H^\perp} \prod_{j=1}^m \widehat{g_{j,k}}(P_{\R^{n_j}}z)dz.
\]
By the dominated convergence and the \(L^1\)-approximation of \(\mathds{1}_{I_j}\) by \(g_{j,k}\), we have that
$$
\lim_{k\to\infty}\int_H \prod_{j=1}^m g_{j,k}(P_{\R^{n_j}}y)dy=\int_H \prod_{j=1}^m \mathds{1}_{I_j}(P_{\R^{n_j}}y)\,dy
$$

On the other hand, since $\widehat{g_{j,k}}=\widehat{\mathds{1}_{I_j}}\,\widehat{\phi_{j,k}}$ and, for each fixed $\xi\in\mathbb{R}^n$, $\displaystyle{\lim_{k\to\infty}\widehat{\phi_{j,k}}(\xi)=\lim_{k\to\infty}\widehat{\phi}\left(\frac{\xi}{k}\right)= 1}$, it follows that for each fixed \(z\in H^\perp\),
\[
\lim_{k\to\infty}\prod_{j=1}^m \widehat{g_{j,k}}(P_{\R^{n_j}}z)
=\prod_{j=1}^m \widehat{\mathds{1}_{I_j}}(P_{\R^{n_j}}z).
\]
Since  for every $z\in\mathbb{R}^{n_j}$, for every $1\le j\le m$ and every $k\in\mathbb{N}$ we have $|\widehat{\phi_{j,k}}(\xi)|=|\widehat{\phi}\left(\frac{\xi}{k}\right)|\le\Vert\phi\Vert_\infty$, we get that for every $z\in\mathbb{R}^{n_j}$ and every $1\le j\le m$
\[
\Big|\prod_{j=1}^m \widehat{g_{j,k}}(P_{\R^{n_j}}z)\Big|
\le \Vert\phi\Vert_\infty^m \prod_{j=1}^m \left|\widehat{\mathds{1}_{I_j}}(P_{\R^{n_j}}z)\right|=\Vert\phi\Vert_\infty^m G(z).
\]
Since by hypothesis $G$ is integrable in $H^\perp$, the dominated convergence theorem yields
\[
\lim_{k\to\infty}\frac{1}{(2\pi)^{N-k}}\int_{H^\perp}\prod_{j=1}^m \widehat{g_{j,k}}(P_{\R^{n_j}}z)dz
= \frac{1}{(2\pi)^{N-k}}\int_{H^\perp}\prod_{j=1}^m \widehat{\mathds{1}_{I_j}}(P_{\R^{n_j}}z)dz.\qedhere
\]
\end{proof}

The following lemma gives a condition for the integrability of the function $G$ in theorem under feasibility conditions on the datum on $H^\perp$ and good decay of the fourier transform of the functions. Such conditions will be fulfilled in the cases we considered in this paper.

\begin{lemma}
\label{lem:integrability_BL}
Let $m\in\mathbb{N}$, $n_1,\ldots,n_m\in\mathbb{N}$ and set $N=n_1+\ldots+n_m$. Let $H\in G_{N,k}$  be a $k$-dimensional linear subspace in $\mathbb{R}^N=\bigoplus_{j=1}^m \R^{n_j}$ and let $I_j\subset \R^{n_j}$ be bounded measurable sets, for every $1\ls j\ls m$, and $G:H^\perp\to[0,\infty)$ be the function
\[
G(z):=\prod_{j=1}^m\left|\widehat{\mathds{1}_{I_j}}\!\left(P_{\R^{n_j}}z\right)\right|.
\]
Let $J_0:=\{1\le j\le m\,:\, P_{\R^{n_j}}|_{H^\perp}\not\equiv 0\}=\{1\le j\le m\,:\,\R^{n_j}\not\subseteq H\}$.
Assume that for each $j\in J_0$ there exists $C_j>0$ such that
\begin{equation}
\label{eq:Fourier_decay_chi}
\left|\widehat{\mathds{1}_{I_j}}(\xi)\right|
\le \frac{C_j}{1+\|\xi\|_2},
\qquad \forall\,\xi\in\R^{n_j}.
\end{equation}
Assume furthermore that there exist exponents $p_j\in(1,\infty]$, $j\in J_0$,
such that
\begin{equation}
\label{eq:p_condition}
p_j>n_j\quad \text{for all }j\in J_0,
\end{equation}
and the Brascamp--Lieb datum $(H^\perp;(P_{\R^{n_j}}|_{H^\perp})_{j\in J_0};(p_j)_{j\in J_0})$
is feasible in the sense of Definition~\ref{def:BL-feasible}.
Then $G\in L^1(H^\perp)$.
\end{lemma}

\begin{proof}
Let us denote, for every $1\ls j\ls m$, $A_j=P_{\R^{n_j}}|_{H^\perp}$, the orthogonal projection onto the corresponding $\R^{n_j}$. If $j\notin J_0$ we have $A_j\equiv 0$ on $H^\perp$, hence
$\widehat{\mathds{1}_{I_j}}(P_{\R^{n_j}}z)=\widehat{\mathds{1}_{I_j}}(0)=|I_j|$
is constant and does not affect integrability. Thus it suffices to show that
\[
\int_{H^\perp}\prod_{j\in J_0}\left|\widehat{\mathds{1}_{I_j}}(A_j z)\right|\,dz<\infty.
\]
Set $f_j(\xi):=\left|\widehat{\mathds{1}_{I_j}}(\xi)\right|$ on $\R^{n_j}$.
By the decay assumption \eqref{eq:Fourier_decay_chi}, for every $\xi\in\R^{n_j}$ we have
\[
0\le f_j(\xi)\le \frac{C_j}{1+\|\xi\|_2}.
\]
We claim that $f_j\in L^{p_j}(\R^{n_j})$ whenever $p_j>n_j$. Indeed, if $p_j<\infty$,
integration in polar coordinates yields,
\[
\|f_j\|_{L^{p_j}(\R^{n_j})}^{p_j}
\le C_j^{p_j}\int_{\R^{n_j}}\frac{1}{(1+\|\xi\|_2)^{p_j}}\,d\xi
= C_j^{p_j} n_j\vol_{n_j}(B_2^{n_j})\int_{0}^{\infty}\frac{r^{n_j-1}}{(1+r)^{p_j}}\,dr<\infty,
\]
since $p_j>n_j$ implies $(1+r)^{-p_j}r^{n_j-1}$ is integrable at infinity.
If $p_j=\infty$, then $\|f_j\|_{L^\infty}\le C_j$ is immediate from
\eqref{eq:Fourier_decay_chi}. This proves $f_j\in L^{p_j}(\R^{n_j})$ for all $j\in J_0$.

Now apply the Brascamp--Lieb inequality (Remark~\ref{rem:BL-ineq}) to the feasible
datum $(H^\perp;(A_j)_{j\in J_0};(p_j)_{j\in J_0})$:
\[
\int_{H^\perp}\prod_{j\in J_0} f_j(A_j z)\,dz
\le
C_{\mathrm{BL}}\prod_{j\in J_0}\|f_j\|_{L^{p_j}(\R^{n_j})}
<\infty.
\]
Hence $\prod_{j\in J_0}|\widehat{\mathds{1}_{I_j}}(A_j z)|$ is integrable on $H^\perp$.
Multiplying by the constant factors $\prod_{j\notin J_0}|I_j|$ yields $G\in L^1(H^\perp)$.
\end{proof}

\begin{remark}
The estimate  \eqref{eq:Fourier_decay_chi} is very general and holds all bounded sets of finite perimeter (for example convex bodies, Lipschitz domains, intervals etc.)
\end{remark}

\begin{remark}
Notice that, in our setup in Corollary \ref{cor:UpperBoundParsevalAndBrascampLieb}, given $1\leq n\leq m$,  $(c_j,v_j)_{j=1}^m\subseteq(0,1]\times S^{n-1}$ providing a decomposition of the identity in $\R^n$, and denoting for any $1\leq k\leq m$ by $H=\textrm{span}\{v_j\,:\,1\leq j\leq k\}$, we identified $\mathbb{R}^n$ with $H_1:=\textrm{span}\{e_i\,:\,1\le i\le n\}\subseteq\mathbb{R}^m$, we took $(x_j)_{j=1}^m$ an orthonormal basis of $\mathbb{R}^m$ such that $P_Hx_j=\sqrt{c_j}v_j$, identified $\mathbb{R}^k=\bigoplus_{j=1}^k \textrm{span}\{x_j\}$ with $\textrm{span}\{x_j\,:\,1\le j\le k\}$ and considered $H^\perp$, the orthogonal complement of $H$ in $\R^k$.

Therefore, in this case $n_j=1$ for every $1\le j\le k$ and we have that $P_Hx_j=\sqrt{c_j}v_j\neq0$ for every $j\in J_0=\{1\le j\le k\,:\,x_j\not\in H^\perp\}$. Besides, for every $j\in J_0$ we have that
$$
n_j=1<\frac{1}{1-c_j}=p_j
$$
and the datum $(H^\perp, (P_{\R^{n_j}}|_{H^\perp})_{j\in J_0}, \left(\frac{1}{1-c_j}\right)_{j\in J_0})$ is feasible.
\end{remark}
	
\end{document}